\tikzset{%
element/.style={draw, shape=circle, fill=white, inner sep=1.4pt}
}
\DeclareSymbolFont{bbold}{U}{bbold}{m}{n}
\DeclareSymbolFontAlphabet{\mathbbold}{bbold}
\theoremstyle{plain}
\newtheorem{thm}{Theorem}[section]
\newtheorem{lem}[thm]{Lemma}
\newtheorem{cor}[thm]{Corollary}
\newtheorem{pro}[thm]{Proposition}
\theoremstyle{definition}
\newtheorem{remark}[thm]{Remark}
\newcommand{\up}[1]{\textup{#1}}
\newcommand{\bp}{\mathbf{p}}
\newcommand{\bq}{\mathbf{q}}
\newcommand{\bu}{\mathbf{u}}
\newcommand{\bv}{\mathbf{v}}
\newcommand{\bw}{\mathbf{w}}
\begin{document}

\title[The finite basis problem for ai-semirings of order four]
{The finite basis problem for additively idempotent semirings of order four, III}

\author{Miaomiao Ren}
\address{School of Mathematics, Northwest University, Xi'an, 710127, Shaanxi, P.R. China}
\email{miaomiaoren@yeah.net}

\author{Zexi Liu}
\address{School of Mathematics, Northwest University, Xi'an, 710127, Shaanxi, P.R. China}
\email{zexiliu@yeah.net}

\author{Mengya Yue}
\address{School of Mathematics, Northwest University, Xi'an, 710127, Shaanxi, P.R. China}
\email{myayue@yeah.net}

\author{Yizhi Chen}
\address{ Department of Mathematics and Statistics, Huizhou University, Huizhou 516007, Guangdong, P.R. China}
\email{yizhichen1980@126.com}

\subjclass[2010]{16Y60, 03C05, 08B05}
\keywords{semiring, variety, identity, finitely based, nonfinitely based.}
\thanks{Miaomiao Ren is supported by National Natural Science Foundation of China (12371024).
Mengya Yue, corresponding author, is supported by National Natural Science Foundation of China (12571020).
Yizhi Chen is supported by Guangdong Basic and Applied Basic Research Foundation (2023A1515011690),
and Characteristic Innovation Project of Guangdong Provincial Department of Education (2023KTSCX145).
}

\begin{abstract}
We study the finite basis problem for $4$-element additively idempotent semirings whose
additive reducts have two minimal elements and one coatom. Up to isomorphism, there are $112$ such algebras.
We show that $106$ of them are finitely based and the remaining ones are nonfinitely based.
\end{abstract}

\maketitle

\section{Introduction and preliminaries}
An \emph{additively idempotent semiring} (ai-semiring for short)
is an algebra $(S, +, \cdot)$ equipped with two binary operations $+$ and $\cdot$ satisfying the following axioms:
\begin{itemize}
\item The additive reduct $(S, +)$ forms a commutative idempotent semigroup;

\item The multiplicative reduct $(S, \cdot)$ forms a semigroup;

\item The distributive laws hold:
\[
x(y+z)\approx xy+xz,~\textrm{and}~(x+y)z\approx xz+yz.
\]
\end{itemize}
An ai-semiring variety is \emph{finitely based} if it can be axiomatized by a finite set of identities;
otherwise, it is \emph{nonfinitely based}.
An ai-semiring $S$ is finitely based or nonfinitely based according to whether the variety $\mathsf{V}(S)$ it generates is finitely based or not.

The finite basis problem for a class of ai-semirings concerns
the classification of its members with respect to the property of being finitely based.
The present paper continues the line of the research initiated in \cite{rlzc, yrzs},
focusing on the finite basis problem for $4$-element ai-semirings whose
additive reducts have two minimal elements and one coatom.
For futher background and motivation, we refer the reader to \cite{rlzc, yrzs}.

There are, up to isomorphism, precisely $6$ ai-semirings of order two,
which are denoted by $L_2$, $R_2$, $M_2$, $D_2$, $N_2$ and $T_2$.
The solution of the equational problem for these algebras is provided in \cite[Lemma 1.1]{sr}
and will be repeatedly used in subsequent sections.
For the ai-semirings of order three, there are $61$ isomorphism types,
indexed as $S_i$ for $1 \leq i \leq 61$.
A comprehensive description of these algebras is available in \cite{zrc}.

Up to isomorphism, there are exactly $866$ ai-semirings of order four.
These algebras are categorized into five distinct types based on their additive orders, as illustrated in \cite[Figure 1]{rlzc}.
The finite basis problem for semirings in the first and second types has been resolved by
Ren et al. \cite{rlzc} and Yue et al. \cite{yrzs}, respectively.
The present paper addresses the finite basis problem for semirings in the third type,
which consists of all $4$-element ai-semirings whose additive reducts have two minimal elements and one coatom.
There are, up to isomorphism, $112$ such algebras, which are denoted by $S_{(4, k)}$ for $276\leq k \leq 387$.
We assume that the carrier set of these semirings is $\{1, 2, 3, 4\}$.
Their Cayley tables for addition are determined by Figure \ref{figure01},
and their Cayley tables for multiplication are listed in Table \ref{tb1}.

\setlength{\unitlength}{0.9cm}
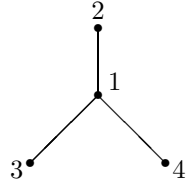
\begin{figure}[htbp]\label{figure01}
\begin{picture}(35, 2.25)
\put(7,1){\line(-1,-1){1}}
\put(7,1){\line(0,1){1}}
\put(7,1){\line(1,-1){1}}
\put(7,1){\line(-1,-1){1}}

\multiput(7,2)(1,-1){1}{\circle*{0.1}}
\multiput(7,1)(-1,-1){1}{\circle*{0.1}}
\multiput(8,0)(1,1){1}{\circle*{0.1}}
\multiput(6,0)(1,1){1}{\circle*{0.1}}

\put(7,2.25){\makebox(0,0){$2$}}
\put(7,0.2){\makebox(0.5,2){$1$}}
\put(5.8,-0.6){\makebox(0,1){$3$}}
\put(8.2,-0.1){\makebox(0,0){$4$}}
\end{picture}
\caption{The additive order of $S_{(4, k)}$, $276\leq k \leq 387$}
\end{figure}

This paper establishes the following main result, whose proof will be developed in the subsequent sections.
\begin{thm}
The only nonfinitely based ai-semirings in $\{S_{(4, k)} \mid 276\leq k \leq 387\}$ are
$S_{(4, 282)}$, $S_{(4, 293)}$, $S_{(4, 304)}$, $S_{(4, 326)}$, $S_{(4, 335)}$, and $S_{(4, 359)}$.
\end{thm}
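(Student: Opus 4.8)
The plan is to split the $112$ algebras into the $106$ that are finitely based and the $6$ exceptional ones, and to treat the two groups by completely different methods. The first step, common to both groups, is to pin down enough of the equational theory of each $S_{(4,k)}$ to decide which group it belongs to. Using the additive table from Figure~\ref{figure01} together with the multiplicative tables in Table~\ref{tb1}, I would first read off the order-two and order-three subalgebras and quotients of each $S_{(4,k)}$; the solved equational problem for the six order-two ai-semirings \cite[Lemma~1.1]{sr} and the catalogue of the $61$ order-three algebras \cite{zrc} then give a supply of identities that must hold in $S_{(4,k)}$, and also a supply of subvarieties that constrain $\mathsf{V}(S_{(4,k)})$ from below. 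This bookkeeping is what separates the two groups: the $6$ exceptional algebras are exactly those whose collection of derived identities exhibits the combinatorial pattern described below, while the other $106$ collapse to varieties we can finitely axiomatize.

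For the $106$ finitely based algebras, the strategy is to exhibit an explicit finite basis for each $\mathsf{V}(S_{(4,k)})$. In practice one groups the algebras according to the isomorphism type of the multiplicative reduct and the way the two minimal additive elements interact with multiplication, since algebras with the same such data tend to satisfy the same short identities. For each group I would propose a candidate finite set $\Sigma$ of identities (the distributive and additive-idempotent axioms, together with a handful of low-degree words forced by the tables), verify $S_{(4,k)} \models \Sigma$, and then prove completeness by a normal-form argument: show that modulo $\Sigma$ every term is equivalent to a term in a canonical shape (for instance a sum of products determined by $\oddsupp$ and $\supp$ of the underlying words), and that two canonical terms equal in $S_{(4,k)}$ are provably equal from $\Sigma$. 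Whenever a candidate variety turns out to coincide with a join or a subvariety of varieties already known to be finitely based (say generated by order-two or order-three factors), I would instead identify $\mathsf{V}(S_{(4,k)})$ with that known variety and invoke its finite basis directly, which keeps the case analysis manageable.

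For the $6$ nonfinitely based algebras, the plan is a single uniform syntactic argument applied to each. The first task is to isolate the structural feature shared by $S_{(4, 282)}$, $S_{(4, 293)}$, $S_{(4, 304)}$, $S_{(4, 326)}$, $S_{(4, 335)}$ and $S_{(4, 359)}$ --- I expect this to be a common order-three subquotient (or a common term-reduct) that already forces a rigid behaviour on the odd support of words. Using it, I would write down an explicit sequence of identities $\bu_n \approx \bv_n$ (with $\len(\bu_n), \len(\bv_n) \to \infty$) that hold in each of the six $S_{(4,k)}$; the natural candidates are built from words that agree on $\supp$ and $\oddsupp$ but differ in the fine arrangement of their repeated variables, since the tables will typically make products depend only on such invariants. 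To show that no finite basis exists, I would, for each $n$, construct a finite ai-semiring $\A_n$ that satisfies every identity of $\mathsf{V}(S_{(4,k)})$ of length at most $n$ (equivalently, every member of any purported finite basis once $n$ is large enough) yet fails $\bu_n \approx \bv_n$; taking $\A_n$ to be a carefully truncated ``counting'' semiring whose multiplication records the offending arrangement up to level $n$ is the usual device. The existence of such $\A_n$ for all $n$ shows that no finite subset of the identities of $S_{(4,k)}$ axiomatizes it.

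The main obstacle is the nonfinitely based half. The delicate point is the verification, for the separating algebras $\A_n$, of two opposite requirements at once: $\A_n$ must satisfy \emph{all} the short identities true in $S_{(4,k)}$ (this is where the clean description of those identities obtained in the first step is essential) while violating the single long identity $\bu_n \approx \bv_n$. Matching the truncation level to the word construction so that both hold is the crux, and the $\oddsupp$ invariant is the tool that makes the two sides of the identity distinguishable in $\A_n$ but indistinguishable in $S_{(4,k)}$. A secondary difficulty is uniformity: I would aim to prove one nonfinite-basis lemma about the shared subquotient and then check, semiring by semiring, that each of the six algebras genuinely satisfies the sequence $\bu_n \approx \bv_n$, so that the lemma applies. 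Keeping this final check short across all six cases is what determines whether the argument stays clean.
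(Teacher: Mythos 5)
Your treatment of the finitely based half matches the paper's strategy in outline: the paper disposes of most of the $106$ algebras by showing that they satisfy a known finite basis (of the variety generated by all order-two ai-semirings, of the varieties satisfying $x^3\approx x$, or of a known finitely based order-three algebra $S_i$ whose variety they turn out to generate), and handles the remainder by exhibiting explicit finite bases whose completeness is proved by deriving $\bu\approx\bu+\bq$ case by case from the solved equational problems for $S_2$, $S_4$, $S_{10}$, $S_{44}$, $S_{46}$, $S_{47}$, $S_{53}$, $S_{56}$, $S_{57}$, $S_{59}$ and $S_{60}$. That is essentially your ``identify with a known variety or give a normal-form completeness proof'' plan, though the substance lies in the hundred-odd individual verifications, which the proposal does not attempt.

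The genuine gap is in the nonfinitely based half. You propose to prove nonfinite axiomatizability from scratch: isolate a common subquotient, write down identities $\bu_n\approx\bv_n$ of unbounded length holding in each exceptional $S_{(4,k)}$, and for each $n$ build a separating finite algebra $\A_n$ that satisfies all identities of $S_{(4,k)}$ up to length $n$ yet fails $\bu_n\approx\bv_n$. You correctly flag this construction as the crux, but the proposal supplies none of it: no candidate identities, no $\A_n$, and no verification that $\A_n$ models the short identities --- which is precisely the part that cannot be waved through by appeal to $\oddsupp$ invariants alone. The paper does not attempt any of this; it observes that each of $S_{(4,293)}$, $S_{(4,304)}$, $S_{(4,326)}$, $S_{(4,335)}$ contains a copy of the order-three ai-semiring $S_7$ with the set of noncyclic elements forming an order ideal, and invokes the ready-made nonfinite-basis criterion of \cite[Proposition 2.2]{rlzc}, while $S_{(4,282)}$ and $S_{(4,359)}$ fall under \cite[Corollary 2.7]{gmrz} and \cite[Corollary 2.9]{gmrz} respectively. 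Note in particular that the six algebras are not covered by one uniform lemma --- three distinct criteria are needed --- so the single ``nonfinite-basis lemma about the shared subquotient'' you hope to prove once and apply six times does not exist in the simple form you describe; a self-contained argument would have to reconstruct the $S_7$-machinery of \cite{rlzc} and \cite{gmrz}, which is a substantial body of work rather than a short final check.
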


Let $X$ denote a countably infinite set of variables and $X^+$
the free semigroup over $X$. By distributivity,
every ai-semiring term over $X$ can be written as a finite sum of words in $X^+$.
An \emph{ai-semiring identity} over $X$ is an
expression of the form
\[
\bu\approx \bv,
\]
where $\bu$ and $\bv$ are ai-semiring terms over $X$.
From \cite[Theorem 2.5]{kp}, the ai-semiring $(P_f(X^+), \cup, \cdot)$, consisting of all non-empty finite subsets of $X^+$,
is free in the variety $\mathbf{AI}$ of all ai-semirings over $X$.
So we may write
\[
\{\bu_i \mid 1 \leq i \leq k\}\approx \{\bv_j \mid 1 \leq j \leq \ell\}
\]
to denote the ai-semiring identity
\[
\bu_1+\cdots+\bu_k\approx \bv_1+\cdots+\bv_\ell.
\]

Let $S$ be an ai-semiring and $\bu\approx \bv$ an ai-semiring identity.
We say that \emph{$S$ satisfies $\bu\approx \bv$} or \emph{$\bu\approx \bv$ holds in $S$},
if $\varphi(\bu)=\varphi(\bv)$ for every semiring homomorphism $\varphi: P_f(X^+) \to S$.
Note that $\varphi$ is uniquely determined by its values $\{\varphi(x)\mid x\in X\}$,
since $P_f(X^+)$ is generated by $X$.

We now fix some notation that will be used repeatedly.
Let $\bw$ be a word in $X^+$ and $x$ a variable in $X$. Then
\begin{itemize}
\item $t(\bw)$ denotes the last variable that occurs in $\bw$;

\item $c(\bw)$ denotes the set of variables that occur in $\bw$;

\item $\ell(\bw)$ denotes the number of variables occurring in $\bw$ counting multiplicities;

\item $m(x, \bw)$ denotes the number of occurrences of $x$ in $\bw$;

\item $p(\bw)$ denotes the subword of $\bw$ such that $\bw=p(\bw)t(\bw)$.
\end{itemize}

Let $\bu$ be an ai-semiring term such that $\bu=\bu_1+\bu_2+\cdots+\bu_n$,
where $\bu_i \in X^+$, $1 \leq i \leq n$.
Let $\bq$ be a nonempty word, and let $k$ be a positive integer. Then
\begin{itemize}
\item $t(\bu)$ denotes the set $\{t(\bu_i) \mid 1\leq i \leq n\}$;

\item $c(\bu)$ denotes the union of the sets $c(\bu_i)$ for $1\leq i \leq n$;

\item $L_{\geq k}(\bu)$ denotes the set $\{\bu_i \in \bu \mid \ell(\bu_i)\geq k\}$;

\item $L_{\leq k}(\bu)$ denotes the set $\{\bu_i \in \bu \mid \ell(\bu_i)\leq k\}$;

\item $L_k(\bu)$ denotes the set $\{\bu_i \in \bu \mid \ell(\bu_i)= k\}$;

\item $T_{\bq}(\bu)$ denotes the set $\{\bu_i \in \bu \mid t(\bu_i)=t(\bq)\}$;

\item $D_{\bq}(\bu)$ denotes the set $\{\bu_i \in \bu \mid c(\bu_i)\subseteq c(\bq)\}$;

\item $M_{1}(\bq)$ denotes the set $\{x \in c(\bq) \mid m(x, \bq)=1\}$.
\end{itemize}

\begin{table}[htbp]
\caption{The multiplicative tables of $4$-element ai-semirings whose additive reducts have two minimal elements and one coatom} \label{tb1}
\begin{tabular}{cccccc}
\hline
Semiring & $\cdot$ & Semiring & $\cdot$ & Semiring & $\cdot$\\
\hline
$S_{(4, 276)}$
&
\begin{tabular}{cccc}
1 & 1 & 1 & 1\\
1 & 1 & 1 & 1 \\
1 & 1 & 1 & 1 \\
1 & 1 & 1 & 1 \\
\end{tabular}
&
$S_{(4, 277)}$
&
\begin{tabular}{cccc}
1 & 1 & 1 & 1\\
1 & 1 & 1 & 1 \\
1 & 1 & 1 & 1 \\
1 & 1 & 1 & 3 \\
\end{tabular}
&
$S_{(4, 278)}$
&
\begin{tabular}{cccc}
1 & 1 & 1 & 1\\
1 & 1 & 1 & 1 \\
1 & 1 & 1 & 1 \\
1 & 1 & 1 & 4 \\
\end{tabular}\\
\hline

$S_{(4, 279)}$
&
\begin{tabular}{cccc}
1 & 1 & 1 & 1\\
1 & 1 & 1 & 1 \\
1 & 1 & 1 & 3 \\
1 & 1 & 1 & 4 \\
\end{tabular}
&
$S_{(4, 280)}$
&
\begin{tabular}{cccc}
1 & 1 & 1 & 4\\
1 & 1 & 1 & 4 \\
1 & 1 & 1 & 4 \\
1 & 1 & 1 & 4 \\
\end{tabular}
&
$S_{(4, 281)}$
&
\begin{tabular}{cccc}
1 & 1 & 1 & 1\\
1 & 1 & 1 & 1 \\
1 & 1 & 1 & 1 \\
1 & 1 & 3 & 4 \\
\end{tabular}\\
\hline

$S_{(4, 282)}$
&
\begin{tabular}{cccc}
1 & 1 & 1 & 1\\
1 & 1 & 1 & 1 \\
1 & 1 & 1 & 3 \\
1 & 1 & 3 & 4 \\
\end{tabular}
&
$S_{(4, 283)}$
&
\begin{tabular}{cccc}
1 & 1 & 1 & 1\\
1 & 1 & 1 & 1 \\
1 & 1 & 3 & 1 \\
1 & 1 & 1 & 4 \\
\end{tabular}
&
$S_{(4, 284)}$
&
\begin{tabular}{cccc}
1 & 1 & 1 & 4\\
1 & 1 & 1 & 4 \\
1 & 1 & 3 & 4 \\
1 & 1 & 1 & 4 \\
\end{tabular}\\
\hline

$S_{(4, 285)}$
&
\begin{tabular}{cccc}
1 & 1 & 1 & 1\\
1 & 1 & 1 & 1 \\
1 & 1 & 3 & 4 \\
1 & 1 & 4 & 3 \\
\end{tabular}
&
$S_{(4, 286)}$
&
\begin{tabular}{cccc}
1 & 1 & 3 & 4\\
1 & 1 & 3 & 4 \\
1 & 1 & 3 & 4 \\
1 & 1 & 3 & 4 \\
\end{tabular}
&
$S_{(4, 287)}$
&
\begin{tabular}{cccc}
1 & 1 & 1 & 1\\
1 & 2 & 1 & 1 \\
1 & 1 & 1 & 1 \\
1 & 1 & 1 & 1 \\
\end{tabular}\\
\hline
$S_{(4, 288)}$
&
\begin{tabular}{cccc}
1 & 1 & 1 & 1\\
1 & 2 & 1 & 1\\
1 & 1 & 1 & 1\\
1 & 1 & 1 & 3 \\
\end{tabular}
&
$S_{(4, 289)}$
&
\begin{tabular}{cccc}
1 & 1 & 1 & 1\\
1 & 2 & 1 & 1 \\
1 & 1 & 1 & 1 \\
1 & 1 & 1 & 4 \\
\end{tabular}
&
$S_{(4, 290)}$
&
\begin{tabular}{cccc}
1 & 1 & 1 & 1\\
1 & 2 & 1 & 1 \\
1 & 1 & 1 & 3 \\
1 & 1 & 1 & 4 \\
\end{tabular}\\
\hline

$S_{(4, 291)}$
&
\begin{tabular}{cccc}
1 & 1 & 1 & 4\\
1 & 2 & 1 & 4 \\
1 & 1 & 1 & 4 \\
1 & 1 & 1 & 4 \\
\end{tabular}
&
$S_{(4, 292)}$
&
\begin{tabular}{cccc}
1 & 1 & 1 & 1\\
1 & 2 & 1 & 1 \\
1 & 1 & 1 & 1 \\
1 & 1 & 3 & 4 \\
\end{tabular}
&
$S_{(4, 293)}$
&
\begin{tabular}{cccc}
1 & 1 & 1 & 1\\
1 & 2 & 1 & 1 \\
1 & 1 & 1 & 3 \\
1 & 1 & 3 & 4 \\
\end{tabular}\\
\hline

$S_{(4, 294)}$
&
\begin{tabular}{cccc}
1 & 1 & 1 & 1\\
1 & 2 & 1 & 1 \\
1 & 1 & 3 & 1 \\
1 & 1 & 1 & 4 \\
\end{tabular}
&
$S_{(4, 295)}$
&
\begin{tabular}{cccc}
1 & 1 & 1 & 4\\
1 & 2 & 1 & 4 \\
1 & 1 & 3 & 4 \\
1 & 1 & 1 & 4 \\
\end{tabular}
&
$S_{(4, 296)}$
&
\begin{tabular}{cccc}
1 & 1 & 1 & 1\\
1 & 2 & 1 & 1 \\
1 & 1 & 3 & 4 \\
1 & 1 & 4 & 3 \\
\end{tabular}\\
\hline

$S_{(4, 297)}$
&
\begin{tabular}{cccc}
1 & 1 & 3 & 4\\
1 & 2 & 3 & 4 \\
1 & 1 & 3 & 4 \\
1 & 1 & 3 & 4 \\
\end{tabular}
&
$S_{(4, 298)}$
&
\begin{tabular}{cccc}
1 & 2 & 1 & 1\\
1 & 2 & 1 & 1 \\
1 & 2 & 1 & 1 \\
1 & 2 & 1 & 1 \\
\end{tabular}
&
$S_{(4, 299)}$
&
\begin{tabular}{cccc}
1 & 2 & 1 & 1\\
1 & 2 & 1 & 1 \\
1 & 2 & 1 & 1 \\
1 & 2 & 1 & 3 \\
\end{tabular}\\
\hline

$S_{(4, 300)}$
&
\begin{tabular}{cccc}
1 & 2 & 1 & 1\\
1 & 2 & 1 & 1 \\
1 & 2 & 1 & 1 \\
1 & 2 & 1 & 4 \\
\end{tabular}
&
$S_{(4, 301)}$
&
\begin{tabular}{cccc}
1 & 2 & 1 & 1\\
1 & 2 & 1 & 1 \\
1 & 2 & 1 & 3 \\
1 & 2 & 1 & 4 \\
\end{tabular}
&
$S_{(4, 302)}$
&
\begin{tabular}{cccc}
1 & 2 & 1 & 4\\
1 & 2 & 1 & 4 \\
1 & 2 & 1 & 4 \\
1 & 2 & 1 & 4 \\
\end{tabular}\\
\hline

$S_{(4, 303)}$
&
\begin{tabular}{cccc}
1 & 2 & 1 & 1\\
1 & 2 & 1 & 1 \\
1 & 2 & 1 & 1 \\
1 & 2 & 3 & 4 \\
\end{tabular}
&
$S_{(4, 304)}$
&
\begin{tabular}{cccc}
1 & 2 & 1 & 1\\
1 & 2 & 1 & 1 \\
1 & 2 & 1 & 3 \\
1 & 2 & 3 & 4 \\
\end{tabular}
&
$S_{(4, 305)}$
&
\begin{tabular}{cccc}
1 & 2 & 1 & 1\\
1 & 2 & 1 & 1 \\
1 & 2 & 3 & 1 \\
1 & 2 & 1 & 4 \\
\end{tabular}\\
\hline

$S_{(4, 306)}$
&
\begin{tabular}{cccc}
1 & 2 & 1 & 4\\
1 & 2 & 1 & 4 \\
1 & 2 & 3 & 4 \\
1 & 2 & 1 & 4 \\
\end{tabular}
&
$S_{(4, 307)}$
&
\begin{tabular}{cccc}
1 & 2 & 1 & 1\\
1 & 2 & 1 & 1 \\
1 & 2 & 3 & 4 \\
1 & 2 & 4 & 3 \\
\end{tabular}
&
$S_{(4, 308)}$
&
\begin{tabular}{cccc}
1 & 2 & 3 & 4 \\
1 & 2 & 3 & 4 \\
1 & 2 & 3 & 4 \\
1 & 2 & 3 & 4 \\
\end{tabular}\\
\hline

$S_{(4, 309)}$
&
\begin{tabular}{cccc}
1 & 1 & 1 & 1\\
1 & 1 & 1 & 1 \\
1 & 1 & 1 & 1 \\
4 & 4 & 4 & 4 \\
\end{tabular}
&
$S_{(4, 310)}$
&
\begin{tabular}{cccc}
1 & 1 & 1 & 4 \\
1 & 1 & 1 & 4 \\
1 & 1 & 1 & 4 \\
4 & 4 & 4 & 4 \\
\end{tabular}
&
$S_{(4, 311)}$
&
\begin{tabular}{cccc}
1 & 1 & 1 & 1 \\
1 & 1 & 1 & 1 \\
1 & 1 & 3 & 1 \\
4 & 4 & 4 & 4 \\
\end{tabular}\\
\hline
\end{tabular}
\end{table}
\begin{table}[htbp]
\begin{tabular}{cccccc}
\hline
$S_{(4, 312)}$
&
\begin{tabular}{cccc}
1 & 1 & 1 & 4\\
1 & 1 & 1 & 4 \\
1 & 1 & 3 & 4 \\
4 & 4 & 4 & 4 \\
\end{tabular}
&
$S_{(4, 313)}$
&
\begin{tabular}{cccc}
1 & 1 & 1 & 1 \\
1 & 2 & 1 & 1 \\
1 & 1 & 1 & 1 \\
4 & 4 & 4 & 4 \\
\end{tabular}
&
$S_{(4, 314)}$
&
\begin{tabular}{cccc}
1 & 1 & 1 & 4 \\
1 & 2 & 1 & 4 \\
1 & 1 & 1 & 4 \\
4 & 4 & 4 & 4 \\
\end{tabular}\\
\hline

$S_{(4, 315)}$
&
\begin{tabular}{cccc}
1 & 1 & 1 & 1\\
1 & 2 & 1 & 1 \\
1 & 1 & 3 & 1 \\
4 & 4 & 4 & 4 \\
\end{tabular}
&
$S_{(4, 316)}$
&
\begin{tabular}{cccc}
1 & 1 & 1 & 4 \\
1 & 2 & 1 & 4 \\
1 & 1 & 3 & 4 \\
4 & 4 & 4 & 4 \\
\end{tabular}
&
$S_{(4, 317)}$
&
\begin{tabular}{cccc}
1 & 2 & 1 & 4 \\
1 & 2 & 1 & 4 \\
1 & 2 & 1 & 4 \\
4 & 4 & 4 & 4 \\
\end{tabular}\\
\hline

$S_{(4, 318)}$
&
\begin{tabular}{cccc}
1 & 2 & 1 & 4\\
1 & 2 & 1 & 4 \\
1 & 2 & 3 & 4 \\
4 & 4 & 4 & 4 \\
\end{tabular}
&
$S_{(4, 319)}$
&
\begin{tabular}{cccc}
1 & 1 & 1 & 1 \\
1 & 1 & 1 & 1 \\
3 & 3 & 3 & 3 \\
4 & 4 & 4 & 4 \\
\end{tabular}
&
$S_{(4, 320)}$
&
\begin{tabular}{cccc}
1 & 1 & 1 & 1 \\
1 & 2 & 1 & 1 \\
3 & 3 & 3 & 3 \\
4 & 4 & 4 & 4 \\
\end{tabular}\\
\hline

$S_{(4, 321)}$
&
\begin{tabular}{cccc}
1 & 1 & 1 & 1\\
2 & 2 & 2 & 2 \\
1 & 1 & 1 & 1 \\
1 & 1 & 1 & 1 \\
\end{tabular}
&
$S_{(4, 322)}$
&
\begin{tabular}{cccc}
1 & 1 & 1 & 1 \\
2 & 2 & 2 & 2 \\
1 & 1 & 1 & 1 \\
1 & 1 & 1 & 3 \\
\end{tabular}
&
$S_{(4, 323)}$
&
\begin{tabular}{cccc}
1 & 1 & 1 & 1 \\
2 & 2 & 2 & 2 \\
1 & 1 & 1 & 1 \\
1 & 1 & 1 & 4 \\
\end{tabular}\\
\hline

$S_{(4, 324)}$
&
\begin{tabular}{cccc}
1 & 1 & 1 & 1\\
2 & 2 & 2 & 2 \\
1 & 1 & 1 & 3 \\
1 & 1 & 1 & 4 \\
\end{tabular}
&
$S_{(4, 325)}$
&
\begin{tabular}{cccc}
1 & 1 & 1 & 1 \\
2 & 2 & 2 & 2 \\
1 & 1 & 1 & 1 \\
1 & 1 & 3 & 4 \\
\end{tabular}
&
$S_{(4, 326)}$
&
\begin{tabular}{cccc}
1 & 1 & 1 & 1 \\
2 & 2 & 2 & 2 \\
1 & 1 & 1 & 3 \\
1 & 1 & 3 & 4 \\
\end{tabular}\\
\hline

$S_{(4, 327)}$
&
\begin{tabular}{cccc}
1 & 1 & 1 & 1 \\
2 & 2 & 2 & 2 \\
1 & 1 & 3 & 1 \\
1 & 1 & 1 & 4 \\
\end{tabular}
&
$S_{(4, 328)}$
&
\begin{tabular}{cccc}
1 & 1 & 1 & 1 \\
2 & 2 & 2 & 2 \\
1 & 1 & 3 & 4 \\
1 & 1 & 4 & 3 \\
\end{tabular}
&
$S_{(4, 329)}$
&
\begin{tabular}{cccc}
1 & 2 & 1 & 1 \\
2 & 2 & 2 & 2 \\
1 & 2 & 1 & 1 \\
1 & 2 & 1 & 1 \\
\end{tabular}\\
\hline

$S_{(4, 330)}$
&
\begin{tabular}{cccc}
1 & 2 & 1 & 1 \\
2 & 2 & 2 & 2 \\
1 & 2 & 1 & 1 \\
1 & 2 & 1 & 3 \\
\end{tabular}
&
$S_{(4, 331)}$
&
\begin{tabular}{cccc}
1 & 2 & 1 & 1 \\
2 & 2 & 2 & 2 \\
1 & 2 & 1 & 1 \\
1 & 2 & 1 & 4 \\
\end{tabular}
&
$S_{(4, 332)}$
&
\begin{tabular}{cccc}
1 & 2 & 1 & 1 \\
2 & 2 & 2 & 2 \\
1 & 2 & 1 & 3 \\
1 & 2 & 1 & 4 \\
\end{tabular}\\
\hline

$S_{(4, 333)}$
&
\begin{tabular}{cccc}
1 & 2 & 1 & 4 \\
2 & 2 & 2 & 2 \\
1 & 2 & 1 & 4 \\
1 & 2 & 1 & 4 \\
\end{tabular}
&
$S_{(4, 334)}$
&
\begin{tabular}{cccc}
1 & 2 & 1 & 1 \\
2 & 2 & 2 & 2 \\
1 & 2 & 1 & 1 \\
1 & 2 & 3 & 4 \\
\end{tabular}
&
$S_{(4, 335)}$
&
\begin{tabular}{cccc}
1 & 2 & 1 & 1 \\
2 & 2 & 2 & 2 \\
1 & 2 & 1 & 3 \\
1 & 2 & 3 & 4 \\
\end{tabular}\\
\hline

$S_{(4, 336)}$
&
\begin{tabular}{cccc}
1 & 2 & 1 & 1 \\
2 & 2 & 2 & 2 \\
1 & 2 & 3 & 1 \\
1 & 2 & 1 & 4 \\
\end{tabular}
&
$S_{(4, 337)}$
&
\begin{tabular}{cccc}
1 & 2 & 1 & 4 \\
2 & 2 & 2 & 2 \\
1 & 2 & 3 & 4 \\
1 & 2 & 1 & 4 \\
\end{tabular}
&
$S_{(4, 338)}$
&
\begin{tabular}{cccc}
1 & 2 & 1 & 1 \\
2 & 2 & 2 & 2 \\
1 & 2 & 3 & 4 \\
1 & 2 & 4 & 3 \\
\end{tabular}\\
\hline

$S_{(4, 339)}$
&
\begin{tabular}{cccc}
1 & 2 & 3 & 4 \\
2 & 2 & 2 & 2 \\
1 & 2 & 3 & 4 \\
1 & 2 & 3 & 4 \\
\end{tabular}
&
$S_{(4, 340)}$
&
\begin{tabular}{cccc}
1 & 1 & 1 & 1 \\
2 & 2 & 2 & 2 \\
1 & 1 & 1 & 1 \\
4 & 4 & 4 & 4 \\
\end{tabular}
&
$S_{(4, 341)}$
&
\begin{tabular}{cccc}
1 & 1 & 1 & 4 \\
2 & 2 & 2 & 4 \\
1 & 1 & 1 & 4 \\
4 & 4 & 4 & 4 \\
\end{tabular}\\
\hline

$S_{(4, 342)}$
&
\begin{tabular}{cccc}
1 & 1 & 1 & 1 \\
2 & 2 & 2 & 2 \\
1 & 1 & 3 & 1 \\
4 & 4 & 4 & 4 \\
\end{tabular}
&
$S_{(4, 343)}$
&
\begin{tabular}{cccc}
1 & 1 & 1 & 4 \\
2 & 2 & 2 & 4 \\
1 & 1 & 3 & 4 \\
4 & 4 & 4 & 4 \\
\end{tabular}
&
$S_{(4, 344)}$
&
\begin{tabular}{cccc}
1 & 2 & 1 & 1 \\
2 & 2 & 2 & 2 \\
1 & 2 & 1 & 1 \\
4 & 2 & 4 & 4 \\
\end{tabular}\\
\hline

$S_{(4, 345)}$
&
\begin{tabular}{cccc}
1 & 2 & 1 & 4 \\
2 & 2 & 2 & 2 \\
1 & 2 & 1 & 4 \\
4 & 2 & 4 & 4 \\
\end{tabular}
&
$S_{(4, 346)}$
&
\begin{tabular}{cccc}
1 & 2 & 1 & 4 \\
2 & 2 & 2 & 4 \\
1 & 2 & 1 & 4 \\
4 & 2 & 4 & 4 \\
\end{tabular}
&
$S_{(4, 347)}$
&
\begin{tabular}{cccc}
1 & 2 & 1 & 1 \\
2 & 2 & 2 & 2 \\
1 & 2 & 3 & 1 \\
4 & 2 & 4 & 4 \\
\end{tabular}\\
\hline

$S_{(4, 348)}$
&
\begin{tabular}{cccc}
1 & 2 & 1 & 4 \\
2 & 2 & 2 & 2 \\
1 & 2 & 3 & 4 \\
4 & 2 & 4 & 4 \\
\end{tabular}
&
$S_{(4, 349)}$
&
\begin{tabular}{cccc}
1 & 2 & 1 & 4 \\
2 & 2 & 2 & 4 \\
1 & 2 & 3 & 4 \\
4 & 2 & 4 & 4 \\
\end{tabular}
&
$S_{(4, 350)}$
&
\begin{tabular}{cccc}
1 & 2 & 1 & 4 \\
2 & 2 & 2 & 2 \\
1 & 2 & 1 & 4 \\
4 & 4 & 4 & 4 \\
\end{tabular}\\
\hline
\end{tabular}
\end{table}
\begin{table}[htbp]
\begin{tabular}{cccccc}
\hline
$S_{(4, 351)}$
&
\begin{tabular}{cccc}
1 & 2 & 1 & 4\\
2 & 2 & 2 & 4 \\
1 & 2 & 1 & 4 \\
4 & 4 & 4 & 4 \\
\end{tabular}
&
$S_{(4, 352)}$
&
\begin{tabular}{cccc}
1 & 2 & 1 & 4 \\
2 & 2 & 2 & 2 \\
1 & 2 & 3 & 4 \\
4 & 4 & 4 & 4 \\
\end{tabular}
&
$S_{(4, 353)}$
&
\begin{tabular}{cccc}
1 & 2 & 1 & 4 \\
2 & 2 & 2 & 4 \\
1 & 2 & 3 & 4 \\
4 & 4 & 4 & 4 \\
\end{tabular}\\
\hline

$S_{(4, 354)}$
&
\begin{tabular}{cccc}
1 & 1 & 1 & 1\\
2 & 2 & 2 & 2 \\
3 & 3 & 3 & 3 \\
4 & 4 & 4 & 4 \\
\end{tabular}
&
$S_{(4, 355)}$
&
\begin{tabular}{cccc}
1 & 2 & 1 & 1 \\
2 & 2 & 2 & 2 \\
3 & 2 & 3 & 3 \\
4 & 2 & 4 & 4 \\
\end{tabular}
&
$S_{(4, 356)}$
&
\begin{tabular}{cccc}
2 & 2 & 1 & 2 \\
2 & 2 & 2 & 2 \\
1 & 2 & 3 & 1 \\
2 & 2 & 1 & 2 \\
\end{tabular}\\
\hline

$S_{(4, 357)}$
&
\begin{tabular}{cccc}
2 & 2 & 1 & 2\\
2 & 2 & 2 & 2 \\
1 & 2 & 3 & 4 \\
2 & 2 & 1 & 2 \\
\end{tabular}
&
$S_{(4, 358)}$
&
\begin{tabular}{cccc}
2 & 2 & 1 & 2\\
2 & 2 & 2 & 2 \\
1 & 2 & 3 & 1 \\
2 & 2 & 4 & 2 \\
\end{tabular}
&
$S_{(4, 359)}$
&
\begin{tabular}{cccc}
2 & 2 & 1 & 2\\
2 & 2 & 2 & 2 \\
1 & 2 & 3 & 4 \\
2 & 2 & 4 & 2 \\
\end{tabular}\\
\hline

$S_{(4, 360)}$
&
\begin{tabular}{cccc}
2 & 2 & 2 & 1\\
2 & 2 & 2 & 2 \\
1 & 2 & 3 & 1 \\
2 & 2 & 2 & 4\\
\end{tabular}
&
$S_{(4, 361)}$
&
\begin{tabular}{cccc}
2 & 2 & 2 & 2 \\
2 & 2 & 2 & 2 \\
1 & 2 & 3 & 1 \\
2 & 2 & 2 & 2 \\
\end{tabular}
&
$S_{(4, 362)}$
&
\begin{tabular}{cccc}
2 & 2 & 2 & 2 \\
2 & 2 & 2 & 2 \\
1 & 2 & 3 & 4 \\
2 & 2 & 2 & 2 \\
\end{tabular}\\
\hline

$S_{(4, 363)}$
&
\begin{tabular}{cccc}
2 & 2 & 2 & 4 \\
2 & 2 & 2 & 4 \\
1 & 2 & 3 & 4 \\
2 & 2 & 2 & 4 \\
\end{tabular}
&
$S_{(4, 364)}$
&
\begin{tabular}{cccc}
2 & 2 & 1 & 2 \\
2 & 2 & 2 & 2 \\
2 & 2 & 3 & 2 \\
2 & 2 & 1 & 2 \\
\end{tabular}
&
$S_{(4, 365)}$
&
\begin{tabular}{cccc}
2 & 2 & 1 & 2 \\
2 & 2 & 2 & 2 \\
2 & 2 & 3 & 2 \\
2 & 2 & 4 & 2 \\
\end{tabular}\\
\hline

$S_{(4, 366)}$
&
\begin{tabular}{cccc}
2 & 2 & 2 & 2 \\
2 & 2 & 2 & 2 \\
2 & 2 & 1 & 2 \\
2 & 2 & 2 & 1 \\
\end{tabular}
&
$S_{(4, 367)}$
&
\begin{tabular}{cccc}
2 & 2 & 2 & 2 \\
2 & 2 & 2 & 2 \\
2 & 2 & 1 & 2 \\
2 & 2 & 2 & 2 \\
\end{tabular}
&
$S_{(4, 368)}$
&
\begin{tabular}{cccc}
2 & 2 & 2 & 2 \\
2 & 2 & 2 & 2 \\
2 & 2 & 1 & 2 \\
2 & 2 & 2 & 4 \\
\end{tabular}\\
\hline

$S_{(4, 369)}$
&
\begin{tabular}{cccc}
2 & 2 & 2 & 2 \\
2 & 2 & 2 & 2 \\
2 & 2 & 2 & 1 \\
2 & 2 & 1 & 2 \\
\end{tabular}
&
$S_{(4, 370)}$
&
\begin{tabular}{cccc}
2 & 2 & 2 & 2 \\
2 & 2 & 2 & 2 \\
2 & 2 & 2 & 2 \\
2 & 2 & 1 & 2 \\
\end{tabular}
&
$S_{(4, 371)}$
&
\begin{tabular}{cccc}
2 & 2 & 2 & 2 \\
2 & 2 & 2 & 2 \\
2 & 2 & 2 & 2 \\
2 & 2 & 2 & 2 \\
\end{tabular}\\

\hline
$S_{(4, 372)}$
&
\begin{tabular}{cccc}
2 & 2 & 2 & 2 \\
2 & 2 & 2 & 2 \\
2 & 2 & 2 & 2 \\
2 & 2 & 2 & 3 \\
\end{tabular}
&
$S_{(4, 373)}$
&
\begin{tabular}{cccc}
2 & 2 & 2 & 2 \\
2 & 2 & 2 & 2 \\
2 & 2 & 2 & 2 \\
2 & 2 & 2 & 4 \\
\end{tabular}
&
$S_{(4, 374)}$
&
\begin{tabular}{cccc}
2 & 2 & 2 & 4 \\
2 & 2 & 2 & 4 \\
2 & 2 & 2 & 4 \\
2 & 2 & 2 & 4 \\
\end{tabular}\\
\hline

$S_{(4, 375)}$
&
\begin{tabular}{cccc}
2 & 2 & 2 & 2 \\
2 & 2 & 2 & 2 \\
2 & 2 & 3 & 2 \\
2 & 2 & 2 & 4 \\
\end{tabular}
&
$S_{(4, 376)}$
&
\begin{tabular}{cccc}
2 & 2 & 1 & 2 \\
2 & 2 & 2 & 2 \\
2 & 2 & 3 & 2 \\
4 & 4 & 4 & 4 \\
\end{tabular}
&
$S_{(4, 377)}$
&
\begin{tabular}{cccc}
2 & 2 & 2 & 2 \\
2 & 2 & 2 & 2 \\
2 & 2 & 2 & 2 \\
4 & 4 & 4 & 4 \\
\end{tabular}\\
\hline

$S_{(4, 378)}$
&
\begin{tabular}{cccc}
2 & 2 & 2 & 4 \\
2 & 2 & 2 & 4 \\
2 & 2 & 2 & 4 \\
4 & 4 & 4 & 4 \\
\end{tabular}
&
$S_{(4, 379)}$
&
\begin{tabular}{cccc}
3 & 1 & 3 & 3 \\
1 & 2 & 3 & 1 \\
3 & 3 & 3 & 3 \\
3 & 1 & 3 & 3 \\
\end{tabular}
&
$S_{(4, 380)}$
&
\begin{tabular}{cccc}
3 & 3 & 3 & 3 \\
1 & 2 & 3 & 1 \\
3 & 3 & 3 & 3 \\
3 & 3 & 3 & 3 \\
\end{tabular}\\
\hline

$S_{(4, 381)}$
&
\begin{tabular}{cccc}
3 & 2 & 3 & 3 \\
2 & 2 & 2 & 2 \\
3 & 2 & 3 & 3 \\
3 & 2 & 3 & 3 \\
\end{tabular}
&
$S_{(4, 382)}$
&
\begin{tabular}{cccc}
3 & 3 & 3 & 3 \\
2 & 2 & 2 & 2 \\
3 & 3 & 3 & 3 \\
3 & 3 & 3 & 3 \\
\end{tabular}
&
$S_{(4, 383)}$
&
\begin{tabular}{cccc}
3 & 1 & 3 & 3 \\
3 & 2 & 3 & 3 \\
3 & 3 & 3 & 3 \\
3 & 1 & 3 & 3 \\
\end{tabular}\\
\hline

$S_{(4, 384)}$
&
\begin{tabular}{cccc}
3 & 2 & 3 & 3 \\
3 & 2 & 3 & 3 \\
3 & 2 & 3 & 3 \\
3 & 2 & 3 & 3 \\
\end{tabular}
&
$S_{(4, 385)}$
&
\begin{tabular}{cccc}
3 & 3 & 3 & 3 \\
3 & 1 & 3 & 3 \\
3 & 3 & 3 & 3 \\
3 & 3 & 3 & 3 \\
\end{tabular}
&
$S_{(4, 386)}$
&
\begin{tabular}{cccc}
3 & 3 & 3 & 3 \\
3 & 2 & 3 & 3 \\
3 & 3 & 3 & 3 \\
3 & 3 & 3 & 3 \\
\end{tabular}\\
\hline

$S_{(4, 387)}$
&
\begin{tabular}{cccc}
3 & 3 & 3 & 3 \\
3 & 3 & 3 & 3 \\
3 & 3 & 3 & 3 \\
3 & 3 & 3 & 3 \\
\end{tabular}\\
\hline
\end{tabular}
\end{table}

\section{The finite basis problem for specific 4-element ai-semirings}
In this section we solve the finite basis problem for some $4$-element ai-semirings by some known results.
\begin{pro}\label{pro27601}
The following ai-semirings are finitely based\up: $S_{(4, 276)}$,  $S_{(4, 278)}$, $S_{(4, 280)}$, $S_{(4, 283)}$, $S_{(4, 284)}$, $S_{(4, 286)}$, $S_{(4, 287)}$, $S_{(4, 289)}$, $S_{(4, 291)}$, $S_{(4, 298)}$, $S_{(4, 300)}$, $S_{(4, 302)}$, $S_{(4, 309)}$, $S_{(4, 310)}$, $S_{(4, 311)}$, $S_{(4, 312)}$, $S_{(4, 313)}$, $S_{(4, 314)}$, $S_{(4, 319)}$, $S_{(4, 321)}$, $S_{(4, 323)}$, $S_{(4, 329)}$, $S_{(4, 331)}$, $S_{(4, 333)}$, $S_{(4, 340)}$, $S_{(4, 344)}$, $S_{(4, 345)}$, $S_{(4, 371)}$, $S_{(4, 381)}$, $S_{(4, 382)}$, $S_{(4, 384)}$, $S_{(4, 386)}$ and $S_{(4, 387)}$.
\end{pro}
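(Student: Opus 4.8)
The plan is to handle the thirty-three semirings not by constructing a separate identity basis for each, but by recognizing, for every $S = S_{(4,k)}$ in the list, a variety $\mathcal{V}$ whose finite basability is already recorded in the literature and then proving the equality $\mathsf{V}(S)=\mathcal{V}$. The candidate varieties are drawn from the three sources already at our disposal: the six order-two ai-semirings together with \cite[Lemma 1.1]{sr}; the sixty-one order-three semirings $S_i$ classified in \cite{zrc}; and the type~I and type~II order-four semirings settled in \cite{rlzc} and \cite{yrzs}. Since $\mathbf{AI}$ is generated by the free algebra $P_f(X^+)$, every identity is a word-set identity and the forward membership $S\models\Sigma$ reduces to a finite verification over the evaluations $\varphi(x)\in S$; this makes the inclusion $S\in\mathcal{V}$ routine once a candidate basis $\Sigma$ is fixed.

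First I would exploit the multiplicative shape visible in Table~\ref{tb1}. The semirings in the list fall into recognizable families: those with a constant multiplication ($S_{(4,276)}$, $S_{(4,371)}$, $S_{(4,387)}$, where $xy$ equals a fixed idempotent of $(S,+)$), those whose multiplicative reduct is a left-zero or right-zero band compatible with the additive order of Figure~\ref{figure01}, and those that present visibly as a subdirect product of an order-two factor with an order-three factor. For each family I would write down the natural defining set $\Sigma$: the additive semilattice laws encoded in Figure~\ref{figure01}, the distributive laws, and the multiplicative identities forced by the Cayley pattern (for instance $xy\approx uv$ in the constant case, or band-type laws such as $x^2\approx x$ and $xyz\approx xz$ for the left/right-zero reducts). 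Taking $\Sigma$ to be this finite collection, the forward inclusion is immediate by the evaluation check above.

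The substance of the argument is the reverse inclusion $\mathcal{V}\subseteq\mathsf{V}(S)$, for which I would show that every generator of $\mathcal{V}$ is produced from $S$ by the operators $\mathsf{H}$, $\mathsf{S}$, $\mathsf{P}$: concretely, that each relevant order-two or order-three semiring embeds as a subalgebra of $S$, arises as a quotient of $S$ modulo a semiring congruence, or occurs as a subdirect factor of $S^2$. Once $\mathsf{V}(S)$ is trapped between $\mathcal{V}$ and $\mathsf{V}(\text{generators})=\mathcal{V}$, equality follows and the finite basis is inherited from the cited source. The delicate point, and the one I expect to be the main obstacle, is that for several of these semirings the matching variety is a proper join $\mathcal{V}_1\vee\mathcal{V}_2$ of two previously-settled varieties; since joins of finitely based varieties need not be finitely based, it will not suffice merely to cite finite basability of the factors. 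In those cases I would instead exhibit an explicit finite basis for the join by combining the bases of $\mathcal{V}_1$ and $\mathcal{V}_2$ with the few mixed identities imposed by the order-four structure, and then verify that this combined set genuinely axiomatizes $\mathsf{V}(S)$ rather than a strictly larger variety — confirming that last point is where the real work of the proof lies.
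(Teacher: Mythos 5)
There is a genuine gap: your plan never actually closes, and it misses the single idea that makes the paper's proof a two-line verification. The paper does \emph{not} determine $\mathsf{V}(S_{(4,k)})$ exactly for any of these thirty-three semirings. It only checks the one inclusion $S_{(4,k)}\in\mathcal{V}_2$, where $\mathcal{V}_2$ is the variety generated by all six ai-semirings of order two, by verifying that $S_{(4,k)}$ satisfies the finite equational basis of $\mathcal{V}_2$ given in \cite[Theorem~2.1]{sr} (a finite evaluation check). The reason this suffices is the main result of \cite{sr}: $\mathcal{V}_2$ is \emph{hereditarily} finitely based, i.e.\ every subvariety of $\mathcal{V}_2$ is finitely based. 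Membership alone therefore settles the question, and no reverse inclusion, no identification of $\mathsf{V}(S)$ with a particular join, and no new basis needs to be produced. Your proposal cites \cite[Lemma~1.1]{sr} (the solution of the equational problem for the order-two semirings) but not this hereditary finite basis theorem, and without it your strategy is forced into exactly the work you flag at the end.

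That remaining work is not a detail: for each of the thirty-three semirings you would have to pin down $\mathsf{V}(S)$ precisely, and wherever it is a proper join $\mathcal{V}_1\vee\mathcal{V}_2$ you would have to write down a candidate finite basis and prove its completeness, i.e.\ that every identity of $S$ is derivable from it. That completeness argument is the hard part of every such proof in this paper (compare the proofs of Propositions~\ref{pro27701}, \ref{pro28101}, \ref{pro35701}, each of which runs to several pages for a single semiring), and you correctly observe that finite basability of $\mathcal{V}_1$ and $\mathcal{V}_2$ does not transfer to the join. So as written the proposal is a program rather than a proof, and it is a program whose execution would be an order of magnitude longer than the argument it replaces. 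The fix is simply to invoke the hereditary finite basis property of $\mathcal{V}_2$ and reduce everything to the forward membership check you already describe as routine.
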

\begin{proof}
It is easy to check that every semiring in Proposition $\ref{pro27601}$
satisfies the equational basis (see \cite[Theorem 2.1]{sr})
of the variety generated by all ai-semirings of order two.
By the main result of \cite{sr} we have that these algebras are all finitely based.
\end{proof}

\begin{pro}\label{pro29401}
The following ai-semirings are finitely based\up: $S_{(4, 294)}$, $S_{(4, 295)}$, $S_{(4, 296)}$, $S_{(4, 297)}$,
$S_{(4, 305)}$, $S_{(4, 306)}$, $S_{(4, 307)}$, $S_{(4, 308)}$, $S_{(4, 315)}$, $S_{(4, 316)}$, $S_{(4, 318)}$, $S_{(4, 320)}$,
$S_{(4, 327)}$, $S_{(4, 328)}$, $S_{(4, 336)}$, $S_{(4, 337)}$, $S_{(4, 338)}$, $S_{(4, 339)}$, $S_{(4, 342)}$, $S_{(4, 343)}$, $S_{(4, 347)}$, $S_{(4, 348)}$, $S_{(4, 349)}$, $S_{(4, 352)}$, $S_{(4, 353)}$, $S_{(4, 354)}$ and $S_{(4, 355)}$.
\end{pro}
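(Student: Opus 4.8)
The plan is to reproduce the mechanism used in Proposition~\ref{pro27601}, but with a different reference variety. Proposition~\ref{pro27601} succeeds because each algebra listed there satisfies the equational basis of the variety generated by the order-two ai-semirings, and that variety is hereditarily finitely based; the decisive point is that once $S$ lies in a hereditarily finitely based variety, the subvariety $\mathsf V(S)$ it generates is automatically finitely based. For the present list the algebras generally fail the order-two basis, so I would instead locate, for each listed semiring, a known finitely based variety $\mathbf W$ whose equational basis it satisfies, chosen so that $\mathbf W$ is hereditarily finitely based. Then $\mathsf V(S_{(4,k)})$ is a subvariety of $\mathbf W$, and finite basedness follows exactly as before.

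First I would sort the $27$ semirings according to the structure of their multiplicative reducts, since the appropriate reference variety is likely to depend on this type. The list mixes several types: right-zero bands (such as $S_{(4, 308)}$, where $x\cdot y = y$), meet-semilattice reducts (such as $S_{(4, 294)}$), reducts with a multiplicative zero, and genuinely non-idempotent reducts (such as $S_{(4, 296)}$, where $4\cdot 4 = 3$). For each type I would match the semiring against the finitely based order-three ai-semirings classified in \cite{zrc}, or against the type-one and type-two four-element semirings already settled in \cite{rlzc, yrzs}, aiming to show $S_{(4,k)}\in\mathbf W$ for a variety $\mathbf W$ that one of those references proves to be hereditarily finitely based. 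The verification that a given $S_{(4,k)}$ satisfies the finite basis of the chosen $\mathbf W$ is then a routine check on the Cayley tables of Table~\ref{tb1}, performed by testing each generating identity under all assignments into the four-element carrier.

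The main obstacle I anticipate is twofold. First, because the list collects structurally dissimilar algebras — idempotent, zero-containing, and non-idempotent multiplicative reducts all appear — it is unlikely that a single reference variety absorbs every case, so the argument will probably split into several subcases, each invoking its own known result; the work is in identifying the correct finitely based variety for each subfamily. Second, and more delicately, the chosen reference variety must be hereditarily finitely based, not merely finitely based: it is the passage to the subvariety $\mathsf V(S_{(4,k)})$ that carries the conclusion, and a finitely based $\mathbf W$ possessing a nonfinitely based subvariety would be useless here. Once a correctly hereditarily finitely based $\mathbf W$ is fixed for each subcase, the containment checks are mechanical and the finite basedness of every listed algebra follows just as in Proposition~\ref{pro27601}.
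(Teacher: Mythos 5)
Your proposal correctly isolates the abstract mechanism behind Proposition~\ref{pro27601} --- place each algebra inside a known \emph{hereditarily} finitely based variety and conclude that the subvariety it generates is finitely based --- and you are right that hereditary finite basedness, not mere finite basedness, is the property doing the work. But as written the argument has a genuine gap: you never actually name a reference variety $\mathbf W$ for any of the $27$ algebras, so the decisive step is left unexecuted. The paper closes this gap with a single uniform observation that your case analysis misses: every semiring in the list satisfies the identity $x^3 \approx x$, and by the main result of \cite{rzw} the ai-semiring variety defined by $x^3 \approx x$ is hereditarily finitely based. Your prediction that ``it is unlikely that a single reference variety absorbs every case'' is therefore not borne out; the structural heterogeneity you point to is irrelevant here. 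For instance, in $S_{(4,296)}$ one has $4\cdot 4 = 3$ but $4^3 = 3\cdot 4 = 4$, so even the non-idempotent multiplicative reducts in the list satisfy $x^3 \approx x$, and the idempotent ones satisfy it trivially. Checking $x^3 \approx x$ on each Cayley table of Table~\ref{tb1} is the only verification needed, and it is a four-element diagonal computation per algebra.

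To repair your proposal, replace the type-by-type search for candidate varieties with this one check and the citation of \cite{rzw}; the rest of your reasoning (membership in a hereditarily finitely based variety implies the generated variety is finitely based) then goes through verbatim.
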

\begin{proof}
It is easy to see that every semiring in Proposition $\ref{pro29401}$ satisfies $x^3 \approx x$. From
the main result of \cite{rzw} we deduce that these semirings are all finitely based.
\end{proof}
\begin{pro}\label{pro31701}
The following ai-semirings are finitely based\up:
$S_{(4, 317)}$, $S_{(4, 341)}$, $S_{(4, 346)}$, $S_{(4, 350)}$ and $S_{(4, 351)}$.
\end{pro}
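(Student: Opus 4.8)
The plan is to follow the same template as Propositions \ref{pro27601} and \ref{pro29401}: exhibit a finite set of identities satisfied by all five semirings and defining an ai-semiring variety that is already known to be finitely based, and then quote the corresponding finite basis theorem. Inspecting the five multiplicative tables, the natural candidate identities are
\[
x^2y\approx xy \quad\text{and}\quad yx^2\approx yx .
\]
First I would check that each of $S_{(4,317)}$, $S_{(4,341)}$, $S_{(4,346)}$, $S_{(4,350)}$ and $S_{(4,351)}$ satisfies both of them.

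This verification is short. In every one of the five tables the elements $1$, $2$ and $4$ are multiplicative idempotents, while $3$ is the only non-idempotent element and satisfies $3\cdot 3=1$. Hence for a substitution $x\mapsto a$ with $a\in\{1,2,4\}$ both identities hold trivially, and for $x\mapsto 3$ they reduce to the assertions that the row of $3$ equals the row of $1$ and that the column of $3$ equals the column of $1$; both equalities can be read off directly from Table \ref{tb1}. Substituting $y:=x$ into $x^2y\approx xy$ also yields $x^3\approx x^2$.

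The main point, and the step I expect to be the real obstacle, is to pin down which known finite basis result applies. Note that $x^3\approx x^2$ on its own is not sufficient, since the nonfinitely based semiring $S_{(4,282)}$ also satisfies it; so the two-sided reduction identities $x^2y\approx xy$ and $yx^2\approx yx$ must be used in full strength (already $x^2y\approx xy$ fails in $S_{(4,282)}$, e.g.\ $3^2\cdot 4=1\neq 3=3\cdot 4$). Once the ai-semiring variety defined by these two identities is identified with, or shown to be contained in, a variety proved finitely based in the literature, exactly as Proposition \ref{pro27601} invokes \cite{sr} and Proposition \ref{pro29401} invokes \cite{rzw}, the finite basis of $S_{(4,317)}$, $S_{(4,341)}$, $S_{(4,346)}$, $S_{(4,350)}$ and $S_{(4,351)}$ follows at once.
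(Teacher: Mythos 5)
Your verification that the five multiplicative tables satisfy $x^2y\approx xy$ and $yx^2\approx yx$ is correct (in each table $3$ is the unique non-idempotent, $3^2=1$, and row and column $3$ coincide with row and column $1$), but the proof is not complete: the step you yourself identify as ``the real obstacle'' is precisely the content of the paper's proof, and you have not supplied it. Worse, the fallback you offer --- showing the variety defined by your two identities is \emph{contained in} a finitely based variety --- is logically insufficient: every ai-semiring variety is contained in the finitely based variety $\mathbf{AI}$ of all ai-semirings, so containment alone proves nothing. What is actually needed (and what Propositions \ref{pro27601} and \ref{pro29401} implicitly use) is containment in a \emph{hereditarily} finitely based variety, i.e.\ one all of whose subvarieties are finitely based. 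You neither name such a variety nor prove that the variety defined by $x^2y\approx xy$ and $yx^2\approx yx$ has this property, and there is no reason to expect that it does without further argument.

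The paper's proof fills exactly this gap by a different choice of target variety: it checks that each of $S_{(4, 317)}$, $S_{(4, 341)}$, $S_{(4, 346)}$, $S_{(4, 350)}$ and $S_{(4, 351)}$ satisfies the explicit finite equational basis, given in \cite{rz}, of the join of the ai-semiring variety defined by $x^2\approx x$ with the one defined by $x_1x_2\approx y_1y_2$, and then invokes the main result of \cite{rz}, which is that this join is hereditarily finitely based. Your identities are compatible with membership in that join (each element is either multiplicatively idempotent or squares into the idempotent part), but they are not by themselves known to cut out a hereditarily finitely based variety, so the citation of \cite{rz} and the verification against its specific basis cannot be omitted. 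As it stands, your argument establishes only that the five semirings satisfy two particular identities, which does not yet yield finite basability.
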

\begin{proof}
It is easy to see that every semiring in Proposition $\ref{pro31701}$ satisfies
the equational basis (see \cite{rz}) of the variety that is the join of the ai-semiring variety defined by $x^2\approx x$
and the ai-semiring variety defined by $x_1x_2 \approx y_1y_2$.
By the main result of \cite{rz} we obtain that these algebras are all finitely based.
\end{proof}
\begin{pro}
The following ai-semirings are finitely based\up: $S_{(4, 332)}$, $S_{(4, 334)}$, $S_{(4, 356)}$, $S_{(4, 361)}$, $S_{(4, 364)}$, $S_{(4, 367)}$, $S_{(4, 373)}$, $S_{(4, 374)}$, $S_{(4, 377)}$ and $S_{(4, 378)}$.
\end{pro}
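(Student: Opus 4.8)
The plan is to follow the template of Propositions~\ref{pro27601}--\ref{pro31701}: instead of constructing a basis from scratch, I would locate a single variety that is already known to be finitely based, whose equational basis is satisfied by all ten semirings, and then quote that result. The first task is therefore to isolate the common equational feature of $S_{(4, 332)}$, $S_{(4, 334)}$, $S_{(4, 356)}$, $S_{(4, 361)}$, $S_{(4, 364)}$, $S_{(4, 367)}$, $S_{(4, 373)}$, $S_{(4, 374)}$, $S_{(4, 377)}$, and $S_{(4, 378)}$. Inspecting the multiplication tables, the multiplicative power identity $x^3 \approx x^4$ presents itself as the unifying law: in each table the sequence $x, x^2, x^3, \dots$ becomes constant by its third term, so no product of a variable with itself escapes the eventually idempotent part of the semigroup reduct. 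I would verify $x^3 \approx x^4$ element by element; since $P_f(X^+)$ is free, a homomorphism is determined by the images of the variables, and the check reduces to a finite computation.

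The verification splits naturally. For nine of the ten semirings the stronger law $x^2 \approx x^3$ already holds, since each nonidempotent element squares into the idempotent part in a single step; for instance in $S_{(4, 332)}$ one has $3^2 = 1$ and $1\cdot 3 = 1$, so $3^2 = 3^3$. The delicate case is $S_{(4, 367)}$, where $3\cdot 3 = 1$ but $1\cdot 3 = 2$, giving $3^2 = 1 \ne 2 = 3^3$; here $x^2 \approx x^3$ fails and only the weaker $x^3 \approx x^4$ survives, as $3^3 = 2 = 3^4$. This single example forces the target variety to be governed by $x^3 \approx x^4$ rather than by $x^2 \approx x^3$, and the finite basis conclusion would then be obtained by citing the corresponding known finite basis theorem for that variety.

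The main obstacle I anticipate is twofold. First, one must pin down precisely which previously established finitely based variety simultaneously contains all ten algebras; the power identity alone need not be its full basis, so for the structurally richer members $S_{(4, 356)}$, $S_{(4, 361)}$, and $S_{(4, 364)}$ — whose tables carry a genuine multiplicative idempotent at $3$ together with the additive top $2$ also serving as a multiplicative zero — I would have to confirm that \emph{every} identity in the cited basis, not merely the power law, is satisfied. Second, should no single prior theorem cover all ten uniformly, the natural fallback is to partition the list according to which stronger laws hold (the nine algebras satisfying $x^2 \approx x^3$ versus $S_{(4, 367)}$) and to invoke a separate known result on each block. In either case the conclusion that these semirings are finitely based then follows at once from the quoted theorem, exactly as in the preceding propositions.
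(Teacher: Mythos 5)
Your strategy has a fatal gap, and it is not the route the paper takes. The identity $x^3\approx x^4$ (indeed even the stronger $x^2\approx x^3$, which you note holds in nine of the ten algebras) does not carve out a hereditarily finitely based variety, so there is no ``corresponding known finite basis theorem'' available to quote. Concretely, $S_{(4,282)}$ also satisfies $x^2\approx x^3$ (from its table one checks $1^2=1$, $2^2=3^2=1$ and $4^2=4$), yet it is nonfinitely based by Proposition~\ref{pro29301}. Hence any variety defined by a power identity weak enough to contain all ten of your algebras also contains nonfinitely based finite members. The template of Propositions~\ref{pro27601}--\ref{pro31701} cannot simply be transplanted: it works there only because the cited varieties (the variety generated by the order-two ai-semirings in \cite{sr}, the variety defined by $x^3\approx x$ in \cite{rzw}, the join variety in \cite{rz}) are known to be \emph{hereditarily} finitely based. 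Merely placing an algebra inside some finitely based variety proves nothing---every ai-semiring lies in the finitely based variety $\mathbf{AI}$---so your fallback of partitioning the list according to which stronger power laws hold runs into the same wall unless each block is matched to a hereditarily finitely based variety, which a power identity alone will not supply.

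The paper's actual argument is different in kind: for each of the ten algebras it exhibits a subdirect decomposition into two three-element factors and checks that one factor satisfies the known finite equational basis of the other, so that the four-element algebra generates exactly the variety of a single three-element ai-semiring (for instance $\mathsf{V}(S_{(4,332)})=\mathsf{V}(S_{6})$ via the factors $S_{30}$ and $S_{6}$, and $\mathsf{V}(S_{(4,367)})=\mathsf{V}(S_{59})$); the conclusion then follows from the main result of \cite{zrc}, which settles the finite basis problem for all ai-semirings of order three. If you want to salvage your plan, the object to aim for is not a common power identity but an identification of each $\mathsf{V}(S_{(4,k)})$ with a concrete variety already known to be finitely based.
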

\begin{proof}
It is easy to verify that $S_{(4, 332)}$ is isomorphic to a subdirect product of $S_{30}$ and $S_{6}$,
and that $S_{30}$ satisfies the finite equational basis (see \cite[Proposition 3]{zrc}) of $S_{6}$.
Thus $\mathsf{V}(S_{(4, 332)}) = \mathsf{V}(S_{6})$
and so $S_{(4,332)}$ is finitely based. One can use the same approach to prove that
\[\mathsf{V}(S_{(4, 334)}) = \mathsf{V}(S_{4}),
\mathsf{V}(S_{(4, 356)}) = \mathsf{V}(S_{53}),
\mathsf{V}(S_{(4, 361)}) = \mathsf{V}(S_{57}),
\]
\[
\mathsf{V}(S_{(4, 364)}) = \mathsf{V}(S_{54}),
\mathsf{V}(S_{(4, 367)}) = \mathsf{V}(S_{59}),
\mathsf{V}(S_{(4, 373)}) = \mathsf{V}(S_{60}),
\]
\[
\mathsf{V}(S_{(4, 374)}) = \mathsf{V}(S_{56}),
\mathsf{V}(S_{(4, 377)}) = \mathsf{V}(S_{58}),
\mathsf{V}(S_{(4, 378)}) = \mathsf{V}(S_{55}).
\]
By the main result of \cite{zrc} we deduce that
$S_{(4, 334)}$, $S_{(4, 356)}$, $S_{(4, 361)}$, $S_{(4, 364)}$, $S_{(4, 367)}$, $S_{(4, 373)}$, $S_{(4, 374)}$,
$S_{(4, 377)}$ and $S_{(4, 378)}$ are all finitely based.
\end{proof}
\begin{pro}\label{pro29301}
The following ai-semirings are nonfinitely based\up: $S_{(4, 282)}$, $S_{(4, 293)}$, $S_{(4, 304)}$, $S_{(4, 326)}$, $S_{(4, 335)}$
and $S_{(4, 359)}$.
\end{pro}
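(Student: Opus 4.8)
The plan is to prove nonfinite basability \emph{directly}, by exhibiting for each of the six algebras an infinite family of identities $\sigma_n\colon \bu_n\approx \bv_n$ that the algebra satisfies but that cannot all be derived from any finite subset of its equational theory, following the method developed in parts I and II of this series \cite{rlzc, yrzs}. The six tables share a tight structural kinship that I would exploit to organise the work around one representative. In the five semirings $S_{(4,282)}$, $S_{(4,293)}$, $S_{(4,304)}$, $S_{(4,326)}$, $S_{(4,335)}$ the two minimal elements obey $4\cdot 4=4$, $3\cdot 4=4\cdot 3=3$, and $3\cdot 3=1$, so that $4$ acts as a local identity on $\{3,4\}$ while $3$ behaves like a nilpotent element collapsing onto the additive coatom $1=3+4$; in $S_{(4,359)}$ the dual configuration occurs, with $3$ a two-sided multiplicative identity and $4\cdot 4$ equal to the additive top $2$. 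This nilpotent-type collapse of a minimal generator onto a distinguished additive element is the engine that will drive the independent family. I would therefore carry out the argument in full for $S_{(4,282)}$ and then transfer it.

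For the core argument I would first use the free description $P_f(X^+)$ together with the word-invariants $c(\bw)$, $\ell(\bw)$, $m(x,\bw)$, $p(\bw)$ and the summand-sets $L_{\ge k}(\bu)$, $T_{\bq}(\bu)$, $D_{\bq}(\bu)$, $M_1(\bq)$ to give a combinatorial description of the short identities that hold in $S_{(4,282)}$: which summands may be inserted or deleted, and how the content and the multiplicities of the minimal generators on the two sides must be matched once a $3\cdot 3$-type collapse is forced. I would then define $\sigma_n$ from words that interleave a fixed frame variable with $n$ fresh variables, arranged so that under \emph{every} homomorphism into the $4$-element algebra at least one collapse is triggered symmetrically on both sides (this yields soundness, to be verified by a finite case analysis on the images of the frame variable), while the two sides nevertheless encode different ``length $n$'' information. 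Soundness of the whole family is the routine half.

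The crux — and the step I expect to be the main obstacle — is non-derivability. The cleanest route is to build, for each $n$, an auxiliary finite ai-semiring $A_n$ that satisfies \emph{all} identities of $\mathsf{V}(S_{(4,282)})$ of length at most $n$ but \emph{fails} $\sigma_n$; then, were the variety finitely based, some fixed bound $n_0$ would suffice, $A_{n_0}$ would lie in $\mathsf{V}(S_{(4,282)})$, yet it would refute the valid identity $\sigma_{n_0}$, a contradiction. The delicate part is not the failure of $\sigma_n$ in $A_n$ (which one designs in) but the proof that $A_n$ models every \emph{short} identity of the variety while lying outside it: this forces a genuine understanding of the deductive closure of the short identities, and it is precisely here that the invariants $D_{\bq}$ and $M_1(\bq)$ are needed, to control which summands survive in $A_n$ and to locate exactly where the two sides of $\sigma_n$ diverge in the ``length $\ge n$'' regime that $A_n$ is blind to below level $n$.

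Finally I would complete the transfer. For $S_{(4,293)}$, $S_{(4,304)}$, $S_{(4,326)}$ and $S_{(4,335)}$, whose differences from $S_{(4,282)}$ lie only in the idempotent $2$ and in the constant rows or columns valued $2$, I would argue that these changes do not affect the fragment of the equational theory used above, so that the same $\sigma_n$ (up to a harmless adjustment) remains valid and the same $A_n$-type witnesses remain available; where convenient this can be phrased as a variety relationship in the spirit of the subdirect-product arguments used earlier for the finitely based cases. The main residual risk is $S_{(4,359)}$: since its multiplication is of the dual shape (a global identity $3$ and a generator $4$ squaring to the additive top), I expect it to require its own dualised identities and witnessing algebras rather than a direct reduction to $S_{(4,282)}$, and I would treat it in a separate step modelled on, but not identical to, the representative argument.
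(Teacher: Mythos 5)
There is a genuine gap: what you have written is a strategy outline, not a proof. Every load-bearing component is deferred. The identities $\sigma_n$ are never written down; the witness algebras $A_n$ are never constructed; the soundness verification is described as ``routine'' but not performed; and the non-derivability argument --- which you yourself identify as ``the crux'' and ``the main obstacle'' --- is left entirely unexecuted. A nonfinite-basis proof by the $A_n$-method stands or falls on the explicit construction of $A_n$ and the verification that $A_n$ satisfies all identities of the variety in at most $n$ variables (or of bounded length) while refuting $\sigma_n$; asserting that one would ``design in'' the failure of $\sigma_n$ and then ``control which summands survive'' does not discharge this. The transfer to the other five algebras is likewise only asserted, and you explicitly flag $S_{(4,359)}$ as a ``residual risk'' requiring a separate, unspecified argument. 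Nothing in the proposal could be checked or refuted as it stands.

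For comparison, the paper's actual proof is a short reduction to known general theorems, so the entire burden you propose to shoulder has already been discharged elsewhere. For $k\in\{293,304,326,335\}$ one checks that $S_{(4,k)}$ contains a copy of the three-element semiring $S_7$ and that the set of noncyclic elements forms an order ideal, and then invokes \cite[Proposition 2.2]{rlzc}; for $S_{(4,282)}$ and $S_{(4,359)}$ one invokes \cite[Corollary 2.7]{gmrz} and \cite[Corollary 2.9]{gmrz} respectively. If you wish to give a self-contained proof along the lines you sketch, the honest statement is that you would essentially be re-proving those cited results, and the proposal in its current form contains none of the combinatorial content needed to do so. Note also that your grouping of the six algebras by the behaviour of the minimal elements $3$ and $4$ does not match the grouping the paper's proof actually uses ($282$ and $359$ are handled by one source, the other four by another), which suggests the structural kinship you rely on for the transfer step is not the one that makes the argument work.
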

\begin{proof}
Let $k$ be a number in $\{293, 304, 326, 335\}$.
It is easy to verify that $S_{(4, k)}$ contains a copy of $S_7$
and that the set of all noncyclic elements of $S_{(4, k)}$ forms an order ideal.
From \cite[Proposition 2.2]{rlzc} we deduce that $S_{(4, k)}$ is nonfinitely based.
On the other hand, $S_{(4, 282)}$ and $S_{(4, 359)}$ are both nonfinitely based
based by \cite[Corollary 2.7]{gmrz} and \cite[Corollary 2.9]{gmrz}, respectively.
\end{proof}

\section{Equational bases of some 4-element ai-semirings related to $S_2$}
In this section we study the finite basis problem for some 4-element ai-semirings that relate to $S_2$.
The following result, which is \cite[Lemma 4.1]{rlzc}, provides a solution of the equational problem for $S_2$.
\begin{lem}\label{lem201}
Let $\bu\approx \bu+\bq$ be an ai-semiring identity such that
$\bu=\bu_1+\bu_2+\cdots+\bu_n$ and $\bu_i, \bq \in X^+$, $1\leq i \leq n$.
Then $\bu\approx \bu+\bq$ is satisfied by $S_2$ if and only if $\bu$ and $\bq$ satisfy one of the following conditions\up:
\begin{itemize}
\item[$(1)$] $\ell(\bu_i)\geq 3$ for some $\bu_i \in \bu$\up;

\item[$(2)$] $c(L_1(\bu))\cap c(L_2(\bu)) \neq\emptyset$\up;

\item[$(3)$] $\ell(\bu_i)\leq 2$ for all $\bu_i \in \bu$, $c(L_1(\bu))\cap c(L_2(\bu))=\emptyset$ and $\ell(\bq)\leq 2$.
             If $\ell(\bq)=1$, then $\bu\approx \bu+\bq$ is trivial.
             If $\ell(\bq)=2$, then $c(\bq)\subseteq c(L_2(\bu))$.
\end{itemize}
\end{lem}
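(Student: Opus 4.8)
The plan is to translate the identity into an order condition on $S_2$ and then argue sufficiency and necessity separately, with both directions resting on a combinatorial description of how words evaluate in $S_2$. Write $\le$ for the order of the additive semilattice of $S_2$, so that $a+b$ is the join of $a$ and $b$. Since $P_f(X^+)$ is generated by $X$, a homomorphism $\varphi$ is determined by its values on variables, and $\bu\approx\bu+\bq$ holds in $S_2$ exactly when $\varphi(\bq)\le\varphi(\bu)=\bigvee_{i}\varphi(\bu_i)$ for every assignment $\varphi\colon X\to S_2$. A preliminary reduction I would record is the content obstruction: because $S_2$ detects the set of variables occurring in a word (one may collapse $\bu$ while sending a variable of $\bq$ outside $c(\bu)$ to a dominating value), the identity forces $c(\bq)\subseteq c(\bu)$, and this is used tacitly in all three cases.

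The technical heart is an evaluation lemma that computes $\varphi(\bw)$ for an arbitrary word $\bw$ from the data $\ell(\bw)$, $c(\bw)$ and the restriction of $\varphi$ to $c(\bw)$. Reading this off the Cayley table of $S_2$, the two decisive phenomena are that a product of length at least three saturates to the order-largest value available for its content, while products of length one and two can take strictly smaller values governed by the multiplication pattern. I would prove this lemma by induction on $\ell(\bw)$, pinning down precisely which assignments keep $\varphi(\bw)$ below the saturated value; everything downstream depends on this step.

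Granting the evaluation lemma, sufficiency becomes a finite check. In case $(1)$, a summand $\bu_i$ with $\ell(\bu_i)\ge3$ already evaluates to the saturated top for every $\varphi$, so $\varphi(\bq)\le\varphi(\bu_i)\le\varphi(\bu)$ holds automatically. In case $(2)$, a variable $x\in c(L_1(\bu))\cap c(L_2(\bu))$ sits in both the singleton summand $x$ and a length-two summand containing $x$, and the evaluation lemma shows $\varphi(x)\vee\varphi(\text{that word})$ is always maximal, again dominating $\varphi(\bq)$. In case $(3)$ all summands are short, $c(L_1(\bu))$ and $c(L_2(\bu))$ are disjoint, and $\bq$ is short with $c(\bq)\subseteq c(L_2(\bu))$ (or $\bq$ itself a summand when $\ell(\bq)=1$); here one matches $\bq$ against the appropriate short summand of $\bu$ and checks $\varphi(\bq)\le\varphi(\bu)$ directly.

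The main obstacle is necessity, which I would prove by contraposition: assuming none of $(1)$, $(2)$, $(3)$ holds, I must build a single assignment $\varphi$ witnessing $\varphi(\bq)\not\le\varphi(\bu)$. The construction is dictated by the failures — push $\varphi(\bq)$ upward on the variables of $\bq$ while assigning the variables occurring in the (now exclusively short) summands of $\bu$, whose $L_1$ and $L_2$ contents are disjoint, so that every $\varphi(\bu_i)$, and hence their join, stays strictly below $\varphi(\bq)$. The delicate part is keeping this assignment consistent on variables shared between $\bq$ and the various parts of $\bu$ while still defeating every summand simultaneously; in particular, the borderline subcases of $(3)$ require separate treatment, using $\bq\notin L_1(\bu)$ when $\ell(\bq)=1$ and $c(\bq)\not\subseteq c(L_2(\bu))$ when $\ell(\bq)=2$ to locate a variable of $\bq$ that no summand of $\bu$ can cover.
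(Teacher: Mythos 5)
First, note that the paper does not prove this lemma itself; it is quoted verbatim from \cite[Lemma 4.1]{rlzc}, so there is no in-paper proof to compare against. Your overall strategy --- reduce the identity to the order condition $\varphi(\bq)\le\varphi(\bu)$ for all assignments, establish how words evaluate in $S_2$ by length, then check sufficiency case by case and prove necessity by constructing a separating assignment --- is the natural (and essentially the only) approach, and the skeleton is sound.

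However, your ``preliminary reduction'' is genuinely false, and the error matters. In $S_2$ (the subsemiring $\{1,3,4\}$ of $S_{(4,277)}$, with $3+4=1$ the additive top and $4\cdot 4=3$ the only product different from $1$), every word of length at least $3$ evaluates to $1$, which is simultaneously the top of the order and multiplicatively absorbing. Hence there is no ``dominating value'' to which a fresh variable of $\bq$ can be sent: if some $\bu_i$ has $\ell(\bu_i)\ge 3$ then $\varphi(\bu)=1$ for every $\varphi$ and the identity holds for \emph{arbitrary} $\bq$, e.g.\ $x^3\approx x^3+y$ holds in $S_2$ although $c(\bq)\not\subseteq c(\bu)$. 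So the claim that the identity forces $c(\bq)\subseteq c(\bu)$ contradicts condition $(1)$ of the very lemma you are proving; if it is really ``used tacitly in all three cases'' as you say, the argument breaks. A second, smaller problem is that your necessity construction is framed as pushing $\varphi(\bq)$ ``upward'' while keeping every $\varphi(\bu_i)$ ``strictly below,'' and as locating a variable of $\bq$ that no summand of $\bu$ covers. The actual witnesses do not work that way: the key assignment sends $c(L_2(\bu))$ to $4$ and $c(L_1(\bu))$ to $3$, giving $\varphi(\bu)=3$, and one defeats the identity either with $\varphi(\bq)=1$ (above $3$) or with $\varphi(\bq)=4$ (\emph{incomparable} to $3$). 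In particular, for $\ell(\bq)=1$ the failure of $xy\approx xy+x$ has nothing to do with an uncovered variable --- $x\in c(\bu)$ --- but with the fact that a length-one word can evaluate to $4$ while a length-two word cannot. You should replace the false content reduction by the correct evaluation facts (length $\ge 3$ gives $1$; length $2$ gives $3$ exactly when both letters map to $4$, else $1$; length $1$ gives the letter's value) and rework the necessity cases around the incomparability of $3$ and $4$.
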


In what follows, we shall use $X_c^+$ to denote the free commutative semigroup over the alphabet $X$.
\begin{pro}\label{pro27701}
$\mathsf{V}(S_{(4, 277)})$ is the ai-semiring variety defined by the identities
\begin{align}
xy  & \approx x^2+y^2; \label{27702} \\
x+x^2& \approx x+x^3; \label{27703} \\
x_1x_2x_3 & \approx x_1x_2x_3+y_1y_2. \label{27704}
\end{align}
\end{pro}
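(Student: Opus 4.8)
The plan is to let $\mathcal V$ denote the variety defined by the three identities \eqref{27702}--\eqref{27704} and to prove the two inclusions $\mathsf V(S_{(4,277)})\subseteq\mathcal V$ and $\mathcal V\subseteq\mathsf V(S_{(4,277)})$ separately. The first is routine: one checks that each identity holds in $S_{(4,277)}$. Here it is worth recording the relevant arithmetic, which will be used throughout: the only nontrivial product is $4\cdot 4=3$, every product of three or more elements equals $1$, and additively $2$ is the greatest element, $1$ the coatom, and $3,4$ the two minimal elements with $3+4=1$ (and $3+1=4+1=1$). With these facts the three identities follow from a short case analysis on the values assigned to the variables.

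For the reverse inclusion I would develop a normal form for terms modulo $\mathcal V$, driven by three structural consequences of the identities. First, from \eqref{27704} with words substituted for the variables, any word of length $\ge 3$ absorbs any word of length $\ge 2$ in a sum; applying this in both directions to two words of length $\ge 3$ shows that all such words are $\mathcal V$-equal to a single element $\zeta$, and that $\zeta+\bv\approx\zeta$ whenever $\ell(\bv)\ge 2$ (while $\zeta$ does \emph{not} absorb single variables). Second, \eqref{27702} rewrites every length-two word $xy$ as $x^2+y^2$, so all length-two content becomes a sum of squares of single variables. Third, \eqref{27703} reads $x+x^2\approx x+\zeta$ (since $x^3\approx\zeta$), so a variable occurring both linearly and squared forces $\zeta$, which then absorbs all squares. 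Consequently every term is $\mathcal V$-equivalent to one of two normal forms: Type~I, $\sum_{x\in A}x+\sum_{x\in B}x^2$ with $A\cap B=\emptyset$ and $A\cup B\ne\emptyset$; or Type~II, $\sum_{x\in A}x+\zeta$. One must also verify that this rewriting is exhaustive and that the two types are well defined.

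It then remains to show that the term function induced on $S_{(4,277)}$ separates distinct normal forms; since an identity holds in $\mathsf V(S_{(4,277)})$ exactly when it holds in $S_{(4,277)}$, this yields that any identity valid in $S_{(4,277)}$ equates two terms with the same normal form and is therefore a consequence of \eqref{27702}--\eqref{27704}, giving $\mathcal V\subseteq\mathsf V(S_{(4,277)})$. The set $A$ is recovered by the assignment sending a chosen variable to $2$ and all others to $1$: the value is $2$ precisely when that variable lies in $A$, since only a linear occurrence can take the value $2$. The decisive observation for everything else is that, because $3$ is minimal in the additive semilattice, a term evaluates to $3$ only when \emph{every} one of its words evaluates to $3$; hence a term admits an assignment of value $3$ if and only if it is of Type~I (assign $A\mapsto 3$ and $B\mapsto 4$, which is consistent as $A\cap B=\emptyset$), which separates the two types, and for a Type~I term the preimage of $3$ is exactly $\{\varphi:\varphi|_A\equiv 3,\ \varphi|_B\equiv 4\}$, from which both $A$ and $B$ are read off.

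The main obstacle is precisely this separation step, and the key to it is the remark that value $3$ forces all words to evaluate to $3$. The naive attempt to detect a single square $x^2$ by one assignment fails because $3$ is minimal, so any competing contribution collapses the join through $3+1=1$ or $3+4=1$ and masks the square; the all-words-equal-$3$ criterion circumvents this. A secondary point requiring care is confluence of the normal-form reduction, so that the Type~I/Type~II classification is well defined and covers every term.
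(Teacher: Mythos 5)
Your argument is correct, but it follows a genuinely different route from the paper. The paper first reduces (as is standard for ai-semirings) to identities of the form $\bu\approx\bu+\bq$, then observes that $S_{(4,277)}$ contains copies of $N_2$ on $\{1,2\}$ and of $S_2$ on $\{1,3,4\}$, and invokes the known solution of the equational problem for $S_2$ (Lemma~\ref{lem201}) to split into three cases according to the shape of $\bu$ and $\bq$; in each case it exhibits an explicit derivation of $\bu\approx\bu+\bq$ from \eqref{27702}--\eqref{27704}. You instead prove a normal form theorem for the variety $\mathcal V$ axiomatized by \eqref{27702}--\eqref{27704} (every term reduces to $\sum_{A}x+\sum_{B}x^2$ with $A\cap B=\emptyset$, or to $\sum_{A}x+\zeta$ where $\zeta$ is the common value of all words of length $\ge 3$) and then separate distinct normal forms by evaluations in $S_{(4,277)}$, the key device being that $3$ is additively minimal so a sum equals $3$ only if every summand does. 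Your structural consequences of the axioms all check out, and the separation analysis is complete (the type is detected by attainability of the value $3$, the set $A$ by the assignment sending one variable to $2$, and $B$ by the fibre over $3$). Your worry about confluence of the rewriting is moot: once separation is established, any two normal forms of the same term are $S_{(4,277)}$-equivalent and hence identical, so uniqueness is automatic. The trade-off is that the paper's proof is shorter because it reuses Lemma~\ref{lem201} and the two-element subalgebras, while yours is self-contained and yields more, namely an explicit description of the free algebra of $\mathsf V(S_{(4,277)})$.
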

\begin{proof}
It is easy to check that $S_{(4, 277)}$ satisfies the identities \eqref{27702}--\eqref{27704}.
In the remainder it is enough to prove that every ai-semiring identity of $S_{(4, 277)}$
can be derived by \eqref{27702}--\eqref{27704} and the identities defining $\mathbf{AI}$.
Let $\bu \approx \bu+\bq$ be such a nontrivial identity,
where $\bu=\bu_1+\bu_2+\cdots+\bu_n$ and $\bu_i, \bq \in X^+$, $1 \leq i \leq n$.
By the identity \eqref{27702}, one can deduce the identity $xy \approx yx$.
So we may assume that $\bu_i, \bq \in X_c^+$ for $1 \leq i \leq n$.
It is easy to see that $N_2$ is isomorphic to $\{1, 2\}$ and so $N_2$ satisfies  $\bu \approx \bu+\bq$.
This implies that $\ell(\bq)\geq 2$.
Since $S_2$ is isomorphic to $\{1, 3, 4\}$, it follows that $S_2$ satisfies $\bu \approx \bu+\bq$.
By Lemma \ref{lem201} we only need to consider the following three cases.

\textbf{Case 1.} $\ell(\bu_j)\geq 3$ for some $\bu_j\in \bu$. Then
\[
\bu \approx \bu+\bu_j \stackrel{\eqref{27704}}\approx \bu+\bu_j+\bq \approx \bu+\bq.
\]

\textbf{Case 2.} $c(L_1(\bu))\cap c(L_2(\bu))\neq\emptyset$.
Take $x$ in $c(L_1(\bu))\cap c(L_2(\bu))$.
Then $xy\in L_2(\bu)$ for some $y\in X$.
Now we have
\[
\bu \approx \bu+x+xy \stackrel{\eqref{27702}}\approx \bu+x+x^2+y^2 \stackrel{\eqref{27703}}\approx \bu+x+x^3+y^2.
\]
The remaining steps are similar to Case 1.

\textbf{Case 3.} $\ell(\bu_j)\leq 2$ for all $\bu_j \in \bu $,
$c(L_1(\bu))\cap c(L_2(\bu))=\emptyset$ and $\ell(\bq)=2$.
Then $c(\bq)\subseteq c(L_2(\bu))$.
If we write $\bq=xy$, then $xx_1, yy_1 \in L_2(\bu)$ for some $x_1, y_1 \in X$.
Furthermore, we have
\begin{align*}
\bu
&\approx \bu+xx_1+yy_1\\
&\approx \bu+x^2+x_1^2+y^2+y_1^2 &&(\text{by}~\eqref{27702})\\
&\approx \bu+x_1^2+y_1^2+x^2+y^2 \\
&\approx \bu+x_1y_1+xy &&(\text{by}~\eqref{27702})\\
&\approx \bu+x_1y_1+\bq.
\end{align*}
This derives the identity $\bu\approx \bu+\bq$.
\end{proof}

\begin{remark}
It is a routine matter to verify that $S_{(4, 277)}$ is isomorphic to a subdirect product of $S_{2}$ and $S_{18}$.
So $\mathsf{V}(S_{(4, 277)}) = \mathsf{V}(S_{2}, S_{18})$.
On the other hand, it is easy
to see that both $N_2$ and $T_2$ can be embedded into $S_{18}$.
Also, $S_{18}$ satisfies the equational basis of $\mathsf{V}(N_2, T_2)$ that can be found in \cite{sr}.
Thus $\mathsf{V}(S_{18})=\mathsf{V}(N_2, T_2)$ and so $\mathsf{V}(S_{(4, 277)})=\mathsf{V}(S_{2}, N_2, T_2)$.
Since $T_2$ can be embedded into $S_{2}$, we therefore have
\[
\mathsf{V}(S_{(4, 277)})=\mathsf{V}(S_{2}, N_2).
\]
\end{remark}

\begin{pro}
$S_{(4, 288)}$ is finitely based.
\end{pro}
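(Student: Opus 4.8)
The plan is to exploit a subdirect decomposition of $S_{(4, 288)}$ into the two semirings that already organize this section. First I would record that the multiplicative table of $S_{(4, 288)}$ is symmetric, so $S_{(4, 288)}$ satisfies $xy\approx yx$ and we may work with commutative words throughout. Next I would exhibit two congruences: $\alpha$ with classes $\{1,2\},\{3\},\{4\}$ and $\beta$ with classes $\{1,3,4\},\{2\}$. A direct check on the Cayley tables shows that both are congruences, that $S_{(4, 288)}/\alpha\cong S_2$ (this is exactly the three-element semiring on $\{1,3,4\}$ governed by Lemma \ref{lem201}, as in the Remark following Proposition \ref{pro27701}), that $S_{(4, 288)}/\beta$ is the two-element distributive lattice $\mathbf{L}$ on $\{1,2\}$ (the two-element ai-semiring satisfying $x^2\approx x$ and $x+xy\approx x$), and that $\alpha\cap\beta=\Delta$. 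Hence $S_{(4, 288)}$ is a subdirect product of $S_2$ and $\mathbf{L}$, so that $\mathsf{V}(S_{(4, 288)})=\mathsf{V}(S_2,\mathbf{L})$ and an identity holds in $S_{(4, 288)}$ if and only if it holds in both $S_2$ and $\mathbf{L}$.

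With this in hand I would reduce, as usual in $\mathbf{AI}$, to nontrivial identities of the form $\bu\approx\bu+\bq$ with $\bu=\bu_1+\cdots+\bu_n$ and $\bu_i,\bq$ commutative words, and characterize when such an identity holds. The $S_2$-part is given verbatim by Lemma \ref{lem201}, through its three cases. The $\mathbf{L}$-part is elementary: in the two-element lattice a word evaluates to the top exactly when all its variables do, so $\mathbf{L}\models\bu\approx\bu+\bq$ if and only if some summand of $\bu$ has content contained in $c(\bq)$, that is, if and only if $D_{\bq}(\bu)\neq\emptyset$. Thus the equational theory of $S_{(4, 288)}$ is precisely the conjunction of Lemma \ref{lem201} with the single extra requirement $D_{\bq}(\bu)\neq\emptyset$.

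I would then propose an explicit finite basis $\Sigma$ and prove completeness against this characterization. Besides $xy\approx yx$ and $x+x^2\approx x+x^3$ (both easily checked), $\Sigma$ should contain identities expressing that any two words of length at least $3$ with the same content are equal, for which $x^2y\approx x^3y$ and $x^2y\approx x^2y^2$ are natural candidates (each holds because in $S_2$ every word of length $\ge 3$ evaluates to $1$, while in $\mathbf{L}$ only content matters), together with content-respecting absorption identities such as $x_1x_2x_3+z\approx x_1x_2x_3+z+z^2$ that allow a long summand to absorb a new word whose content is already carried by an existing summand. After verifying $S_{(4, 288)}\models\Sigma$, completeness would proceed through the three cases of Lemma \ref{lem201}: using $D_{\bq}(\bu)\neq\emptyset$ I fix a summand $\bu_{i_0}$ with $c(\bu_{i_0})\subseteq c(\bq)$, and in each case manufacture, from the data guaranteed by that case, a word of length $\ge 3$ with content $c(\bq)$ which is interchangeable with $\bq$ and derivably added to $\bu$, following the pattern of Cases 1--3 in the proof of Proposition \ref{pro27701}.

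The main obstacle is the coupling between the two conditions. Unlike the situation for $S_{(4, 277)}$, the summand forced to be long by Lemma \ref{lem201}(1) need not be the summand whose content covers $\bq$ supplied by the $\mathbf{L}$-part, so the pure $S_2$ absorption $x_1x_2x_3\approx x_1x_2x_3+\bq$ is no longer available and must be replaced by content-aware versions; in particular one cannot absorb $\bq$ into a short summand, since $y\approx y+y^2$ already fails in $S_{(4, 288)}$. Because the join of two finitely based varieties need not be finitely based, the real content of the argument is to show that \emph{finitely many} content-respecting absorption identities generate every instance permitted by the conjunction of Lemma \ref{lem201} with $D_{\bq}(\bu)\neq\emptyset$; checking that the normalization of long words by content and the ensuing case analysis close up is where the effort will lie.
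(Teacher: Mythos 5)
Your reduction to $\mathsf{V}(S_2,D_2)$ is correct and in fact slightly more direct than the paper's. The relations $\alpha$ and $\beta$ you describe are indeed congruences with $\alpha\cap\beta=\Delta$, the quotient by $\alpha$ is the three-element semiring the paper calls $S_2$, and the quotient by $\beta$ is the two-element distributive lattice, which in the paper's notation is $D_2$ (so your $\mathbf{L}$ is not a new algebra). Hence $\mathsf{V}(S_{(4,288)})=\mathsf{V}(S_2,D_2)$. The paper reaches the same variety by a longer route: it writes $S_{(4,288)}$ as a subdirect product of $S_2$ and the three-element semiring $S_{19}$, shows $\mathsf{V}(S_{19})=\mathsf{V}(D_2,T_2)$, and then absorbs $T_2$ into $S_2$. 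Your description of the $D_2$-part of the equational theory (namely $D_{\bq}(\bu)\neq\emptyset$) is also correct.

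The genuine gap is in what follows. Having arrived at $\mathsf{V}(S_2,D_2)$, the paper finishes in one line by citing \cite[Remark 4.8]{rlzc}, where $\mathsf{V}(S_{(4,42)})=\mathsf{V}(S_2,D_2)$ was already shown to be finitely based. You instead undertake to build a finite basis $\Sigma$ for $\mathsf{V}(S_2,D_2)$ from scratch, but you only name candidate identities and explicitly defer the completeness argument (``where the effort will lie''). That completeness argument is the entire substance of the finite basis claim: as your own closing paragraph concedes, a join of finitely based varieties need not be finitely based, so nothing short of a full derivation, from $\Sigma$, of every identity permitted by Lemma \ref{lem201} in conjunction with $D_{\bq}(\bu)\neq\emptyset$ settles the question, and the coupling between the two conditions that you flag is precisely where such arguments can fail. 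As written, the proposal therefore does not prove the proposition. The gap can be closed either by carrying out that case analysis in full or, far more economically, by invoking the prior result on $\mathsf{V}(S_2,D_2)$ from \cite{rlzc}, which is exactly what the paper does.
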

\begin{proof}
It is easy to check that $S_{(4, 288)}$ is isomorphic to a subdirect product of $S_{2}$ and $S_{19}$.
This implies that $\mathsf{V}(S_{(4, 288)}) = \mathsf{V}(S_{2}, S_{19})$.
On the other hand, it is easy
to see that both $D_2$ and $T_2$ can be embedded into $S_{19}$.
Also, $S_{19}$ satisfies the equational basis of $\mathsf{V}(D_2, T_2)$ that can be found in \cite{sr}.
Hence $\mathsf{V}(S_{19})=\mathsf{V}(D_2, T_2)$ and so $\mathsf{V}(S_{(4, 288)})=\mathsf{V}(S_{2}, D_2, T_2)$.
Since $T_2$ can be embedded into $S_{2}$, we therefore obtain
\[
\mathsf{V}(S_{(4, 288)})=\mathsf{V}(S_{2}, D_2).
\]
By \cite[Remark 4.8]{rlzc} it follows that $\mathsf{V}(S_{(4, 42)})$ is equal to $\mathsf{V}(S_{2}, D_2)$ and is finitely based.
Thus $\mathsf{V}(S_{(4, 288)})=\mathsf{V}(S_{(4, 42)})$ and so $S_{(4, 288)}$ is also finitely based.
\end{proof}

\begin{pro}
$S_{(4, 299)}$ and $S_{(4, 322)}$ are both finitely based.
\end{pro}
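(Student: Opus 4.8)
The plan is to imitate the subdirect-product strategy already used in this section for $S_{(4, 277)}$ and $S_{(4, 288)}$: I would realize each of $S_{(4, 299)}$ and $S_{(4, 322)}$ as a subdirect product of $S_2$ with a suitable two-element ai-semiring, and then establish a finite basis for the resulting join variety. The additive reduct is fixed by Figure \ref{figure01} in both cases ($3, 4 < 1 < 2$), so all the verifications below reduce to inspecting the multiplicative Cayley tables in Table \ref{tb1}.

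First I would locate a copy of $S_2$ inside each algebra. In both $S_{(4, 299)}$ and $S_{(4, 322)}$ the subset $\{1, 3, 4\}$ is a subalgebra whose addition is the semilattice $3, 4 < 1$ and whose multiplication is constantly $1$ except that $4\cdot 4 = 3$; this is exactly $S_2$. Next I would exhibit two congruences whose intersection is the identity relation. Let $\theta_1$ be the partition $\{\{1, 3, 4\},\{2\}\}$ and let $\theta_2$ be the partition $\{\{1, 2\},\{3\},\{4\}\}$. A direct check on the tables shows that both are congruences, that $S/\theta_2\cong S_2$ via $\{1,2\}\mapsto 1,\ \{3\}\mapsto 3,\ \{4\}\mapsto 4$ in either case, and that $S/\theta_1$ is a two-element ai-semiring: for $S_{(4, 322)}$ the induced multiplication satisfies $xy\approx x$, so $S_{(4,322)}/\theta_1$ is the two-element left-zero semiring $L_2$, while for $S_{(4, 299)}$ it satisfies $xy\approx y$, so $S_{(4,299)}/\theta_1$ is the right-zero semiring $R_2$. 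Since $\theta_1\cap\theta_2$ is trivial, the natural map into $S/\theta_1\times S/\theta_2$ is an injective subdirect embedding, and as each factor is also a homomorphic image I conclude
\[
\mathsf{V}(S_{(4, 322)})=\mathsf{V}(S_2, L_2),\qquad \mathsf{V}(S_{(4, 299)})=\mathsf{V}(S_2, R_2).
\]

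Finally I would argue that these two join varieties are finitely based, and this is exactly where I expect the main difficulty to lie: a join of finitely based varieties need not be finitely based, so the fact that $S_2$, $L_2$ and $R_2$ are each finitely based does not by itself suffice. I would resolve this in the spirit of the earlier treatments of $\mathsf{V}(S_2, N_2)$ and $\mathsf{V}(S_2, D_2)$ in this section: either identify $\mathsf{V}(S_2, L_2)$ and $\mathsf{V}(S_2, R_2)$ with the variety of some already-classified finitely based four-element semiring of \cite{rlzc}, or else write down an explicit candidate finite basis and prove its completeness. For the latter route I would combine the solution of the equational problem for $S_2$ in Lemma \ref{lem201} with the solution for the two-element semirings in \cite[Lemma 1.1]{sr}: a nontrivial identity $\bu\approx \bu+\bq$ holds in the join iff it holds in both factors, and I would show that the structural constraints this imposes on $\bu$ and $\bq$—controlling the words of length $\ge 3$, the interaction of $L_1(\bu)$ with $L_2(\bu)$, and the bookkeeping forced by left-zero (resp.\ right-zero) multiplication—can all be captured by finitely many identities. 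Verifying that this finite set actually derives every such identity is the delicate part of the argument.
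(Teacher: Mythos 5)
Your subdirect decomposition is correct and in fact slightly more direct than the paper's: you pass straight to the two-element quotients via the congruences $\{\{1,3,4\},\{2\}\}$ and $\{\{1,2\},\{3\},\{4\}\}$, whereas the paper first realizes $S_{(4,299)}$ as a subdirect product of $S_2$ and the three-element semiring $S_{27}$ and then decomposes $S_{27}$ itself as $\mathsf{V}(S_{27})=\mathsf{V}(R_2,T_2)$, absorbing $T_2$ into $S_2$. Both routes land on the same identities $\mathsf{V}(S_{(4,299)})=\mathsf{V}(S_2,R_2)$ and $\mathsf{V}(S_{(4,322)})=\mathsf{V}(S_2,L_2)$, and your congruence checks against the Cayley tables are accurate.

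However, the proof is not complete: the step you yourself flag as ``exactly where I expect the main difficulty to lie'' is never carried out. You offer two possible strategies for showing that $\mathsf{V}(S_2,R_2)$ is finitely based --- match it with an already-classified algebra, or exhibit and verify an explicit finite basis --- but you execute neither, and the second option in particular would require substantial work (a completeness proof in the style of Proposition \ref{pro27701}). The paper closes this gap by citing \cite[Remark 4.5]{rlzc} and \cite[Corollary 4.6]{rlzc}, which establish that $\mathsf{V}(S_{(4,16)})=\mathsf{V}(S_2,R_2)$ and that this variety is finitely based; the case of $S_{(4,322)}$ then follows by duality of the multiplications (equivalently, $\mathsf{V}(S_2,L_2)$ is the dual variety). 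As written, your argument reduces the proposition to an unproven claim, so you need either to invoke the result from \cite{rlzc} explicitly or to supply the completeness argument for a concrete basis.
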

\begin{proof}
It is easy to verify that $S_{(4, 299)}$ is isomorphic to a subdirect product of $S_{2}$ and $S_{27}$.
So $\mathsf{V}(S_{(4, 299)}) = \mathsf{V}(S_{2}, S_{27})$.
On the other hand, it is easy
to see that both $R_2$ and $T_2$ can be embedded into $S_{27}$.
Also, $S_{27}$ satisfies the equational basis of $\mathsf{V}(R_2, T_2)$ that can be found in \cite{sr}.
Thus $\mathsf{V}(S_{27})=\mathsf{V}(R_2, T_2)$ and so $\mathsf{V}(S_{(4, 299)})=\mathsf{V}(S_{2}, R_2, T_2)$.
Since $T_2$ can be embedded into $S_{2}$, we therefore have
\[
\mathsf{V}(S_{(4, 299)})=\mathsf{V}(S_{2}, R_2).
\]
From \cite[Remark 4.5]{rlzc} and \cite[Corollary 4.6]{rlzc}
we know that $\mathsf{V}(S_{(4, 16)})$ is equal to $\mathsf{V}(S_{2}, R_2)$ and is finitely based.
Hence $\mathsf{V}(S_{(4, 299)})=\mathsf{V}(S_{(4, 16)})$ and so $S_{(4, 299)}$ is finitely based.
Notice that $S_{(4, 322)}$ and $S_{(4, 299)}$ have dual multiplications.
Thus $S_{(4, 322)}$ is also finitely based.
\end{proof}

\begin{pro}
$S_{(4, 330)}$ is finitely based.
\end{pro}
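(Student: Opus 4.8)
The plan is to realize $S_{(4,330)}$ as a subdirect product of $S_2$ and a suitable $3$-element ai-semiring, exactly as was done for $S_{(4,277)}$, $S_{(4,288)}$ and $S_{(4,299)}$, and then to reduce the variety it generates to one already known to be finitely based in \cite{rlzc}. First I would exhibit two congruences on $S_{(4,330)}$: let $\rho_3$ be the equivalence with classes $\{1,2\},\{3\},\{4\}$ and $\rho_2$ the one with classes $\{1,3\},\{2\},\{4\}$. A direct inspection of the Cayley tables shows that both are congruences and that $\rho_2\cap\rho_3$ is trivial, so $S_{(4,330)}$ embeds subdirectly into $(S_{(4,330)}/\rho_2)\times(S_{(4,330)}/\rho_3)$. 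In $S_{(4,330)}/\rho_3$ every product equals the top class $\{1,2\}$ except that the square of the class of $4$ is the class of $3$; comparing with the realization of $S_2$ as $\{1,3,4\}\subseteq S_{(4,277)}$, this quotient is isomorphic to $S_2$. The other quotient $S_{(4,330)}/\rho_2$ is the $3$-element chain $\{4\}<\{1,3\}<\{2\}$ in which $\{2\}$ is a two-sided multiplicative zero and every product formed inside $\{1,3\},\{4\}$ equals $\{1,3\}$; denote this semiring by $S_j$.

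Next I would compute $\mathsf{V}(S_j)$. The subsemiring $\{\{1,3\},\{2\}\}$ of $S_j$ has multiplication equal to its addition (join), hence is a copy of $M_2$, while $\{\{1,3\},\{4\}\}$ has constant multiplication equal to its top, hence is a copy of $T_2$; so $\mathsf{V}(M_2,T_2)\subseteq\mathsf{V}(S_j)$. Conversely, the congruence of $S_j$ collapsing $\{1,3\}$ with $\{2\}$ has quotient $T_2$, the one collapsing $\{1,3\}$ with $\{4\}$ has quotient $M_2$, and these intersect trivially, so $S_j$ is itself a subdirect product of $M_2$ and $T_2$. Invoking the equational basis of $\mathsf{V}(M_2,T_2)$ from \cite{sr}, this gives $\mathsf{V}(S_j)=\mathsf{V}(M_2,T_2)$, and therefore $\mathsf{V}(S_{(4,330)})=\mathsf{V}(S_2,M_2,T_2)$. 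Since $T_2$ embeds into $S_2$ (as used already for $S_{(4,277)}$), this simplifies to $\mathsf{V}(S_{(4,330)})=\mathsf{V}(S_2,M_2)$.

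Finally I would match $\mathsf{V}(S_2,M_2)$ with the variety of a concrete $4$-element semiring of the first or second type treated in \cite{rlzc}, in the same way that $\mathsf{V}(S_2,R_2)=\mathsf{V}(S_{(4,16)})$ and $\mathsf{V}(S_2,D_2)=\mathsf{V}(S_{(4,42)})$ are identified there; the corresponding semiring $S_{(4,k)}$ is finitely based, whence so is $S_{(4,330)}$. The main obstacle is the bookkeeping of the first two steps, namely checking that $\rho_2,\rho_3$ are congruences with trivial intersection and correctly recognizing the two quotients among the catalogued algebras $S_2$ and $\{S_i\}$ of \cite{zrc}; note in particular that $M_2$ (multiplication equal to join) does \emph{not} embed into $S_2$, so the factor $M_2$ genuinely survives and the join $\mathsf{V}(S_2,M_2)$ does not collapse to $\mathsf{V}(S_2)$. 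Once $\mathsf{V}(S_j)=\mathsf{V}(M_2,T_2)$ is secured, the reduction to a finitely based variety is immediate from \cite{rlzc}.
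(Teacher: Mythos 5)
Your proposal is correct and follows essentially the same route as the paper: the paper decomposes $S_{(4,330)}$ as a subdirect product of $S_2$ and the three-element semiring $S_{28}$ (your $S_j$), shows $\mathsf{V}(S_{28})=\mathsf{V}(M_2,T_2)$, absorbs $T_2$ into $S_2$, and invokes \cite[Remark 4.3]{rlzc} to identify $\mathsf{V}(S_2,M_2)$ with the finitely based variety $\mathsf{V}(S_{(4,15)})$. The only cosmetic difference is that you certify $\mathsf{V}(S_j)=\mathsf{V}(M_2,T_2)$ by a further subdirect decomposition of $S_j$ rather than by checking the equational basis of $\mathsf{V}(M_2,T_2)$ from \cite{sr}; both work.
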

\begin{proof}
It is a routine matter to verify that $S_{(4, 330)}$ is isomorphic to a subdirect product of $S_{2}$ and $S_{28}$.
So $\mathsf{V}(S_{(4, 330)}) = \mathsf{V}(S_{2}, S_{28})$.
On the other hand, it is easy
to see that both $M_2$ and $T_2$ can be embedded into $S_{28}$.
Also, $S_{28}$ satisfies the equational basis of $\mathsf{V}(M_2, T_2)$ that can be found in \cite{sr}.
Thus $\mathsf{V}(S_{28})=\mathsf{V}(M_2, T_2)$ and so $\mathsf{V}(S_{(4, 330)})=\mathsf{V}(S_{2}, M_2, T_2)$.
Since $T_2$ can be embedded into $S_{2}$, we therefore have
\[
\mathsf{V}(S_{(4, 330)})=\mathsf{V}(S_{2}, M_2).
\]
By \cite[Remark 4.3]{rlzc} we have that $\mathsf{V}(S_{(4, 15)})$ is equal to $\mathsf{V}(S_{2}, M_2)$ and is finitely based.
Hence $\mathsf{V}(S_{(4, 330)})=\mathsf{V}(S_{(4, 15)})$ and so $S_{(4, 330)}$ is finitely based.
\end{proof}

The following result can be found in \cite[Lemma 5.1]{yrzs}.
\begin{lem}\label{lem5901}
Let $\bu\approx \bu+\bq$ be a nontrivial ai-semiring identity such that
$\bu=\bu_1+\bu_2+\cdots+\bu_n$ and $\bu_i, \bq \in X^+$, $1\leq i \leq n$.
If $\bu\approx \bu+\bq$ is satisfied by $S_{59}$, then $\bu$ and $\bq$ satisfy one of the following conditions\up:
\begin{itemize}
\item[$(1)$] $\ell(\bu_i)\geq 3$ for some $\bu_i \in \bu$\up;

\item[$(2)$] $\ell(\bu_i)\leq 2$ for all $\bu_i \in \bu$. Then $\ell(\bq)\leq 2$.
If $\ell(\bq)=1$, then $c(\bq)\subseteq c(\bu)$. If $\ell(\bq)=2$, then $c(\bq)\subseteq c(L_2(\bu))$.
\end{itemize}
\end{lem}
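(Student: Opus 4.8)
The plan is to reduce the identity to a statement about the additive (semilattice) order of $S_{59}$ and then attack each subcondition of $(2)$ by a tailored substitution. Recall that in any ai-semiring the identity $\bu\approx\bu+\bq$ holds under a homomorphism $\varphi$ precisely when $\varphi(\bq)\le\varphi(\bu)$ in the order $s\le t\iff s+t=t$; hence $\bu\approx\bu+\bq$ is satisfied by $S_{59}$ iff $\varphi(\bq)\le\varphi(\bu)$ for every $\varphi\colon P_f(X^+)\to S_{59}$. First I would record the arithmetic of $S_{59}$. Since $S_{59}$ and $S_{(4,367)}$ generate the same variety (as noted above), they satisfy the same identities, so I may test the identity in any convenient algebra of $\mathsf V(S_{(4,367)})$; I would use the three-element chain $\alpha<\beta<\gamma$ obtained as the quotient of $S_{(4,367)}$ that identifies its coatom with one minimal element, in which $\alpha^2=\beta$ while every product having a factor in $\{\beta,\gamma\}$ equals $\gamma$ (so in particular $\alpha^k=\gamma$ for all $k\ge 3$). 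The single fact used repeatedly is the resulting evaluation rule: for a word $\bw$ under an assignment valued in $\{\alpha,\beta,\gamma\}$, one has $\varphi(\bw)=\alpha^{\ell(\bw)}\in\{\alpha,\beta\}$ when every variable of $\bw$ is sent to $\alpha$ and $\ell(\bw)\le 2$, whereas $\varphi(\bw)=\gamma$ as soon as $\ell(\bw)\ge 3$ or some variable of $\bw$ is sent to $\beta$ or $\gamma$.

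Next comes the trivial dichotomy. If $\ell(\bu_i)\ge 3$ for some $\bu_i$, condition $(1)$ holds and there is nothing to prove, so I assume $\ell(\bu_i)\le 2$ for all $i$; this is the standing hypothesis of $(2)$, and the task is to bound $\ell(\bq)$ and constrain $c(\bq)$. To obtain $\ell(\bq)\le 2$ I would use the ``all-$\alpha$'' substitution sending every variable to $\alpha$: then $\varphi(\bu)=\sum_i\alpha^{\ell(\bu_i)}\le\beta<\gamma$ since every $\bu_i$ has length $\le 2$, while $\ell(\bq)\ge 3$ would force $\varphi(\bq)=\gamma\not\le\varphi(\bu)$, contradicting satisfaction. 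Hence $\ell(\bq)\le 2$.

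For the content conditions I would exploit the failure of each inclusion by pushing $\varphi(\bq)$ up to the top $\gamma$ while keeping $\varphi(\bu)$ at or below $\beta$. If $\ell(\bq)=1$, write $\bq=x$ and suppose $x\notin c(\bu)$; sending $x\mapsto\gamma$ and every other variable to $\alpha$ gives $\varphi(\bq)=\gamma$, whereas each $\bu_i$ avoids $x$ and has length $\le 2$, so $\varphi(\bu)\le\beta<\gamma$, a contradiction; thus $c(\bq)\subseteq c(\bu)$. If $\ell(\bq)=2$ and some $z\in c(\bq)$ fails to lie in $c(L_2(\bu))$, I would send $z\mapsto\beta$ and every other variable to $\alpha$: the word $\bq$ then contains a factor evaluated at $\beta$, so $\varphi(\bq)=\gamma$, while every summand of $\bu$ maps into $\{\alpha,\beta\}$ — the length-$\le 2$ words that avoid $z$ evaluate to $\alpha$ or $\beta$, and a length-$1$ occurrence of $z$ evaluates to $\beta$ — whence $\varphi(\bu)\le\beta<\gamma$, again a contradiction. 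Therefore $c(\bq)\subseteq c(L_2(\bu))$, completing case $(2)$.

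The argument is essentially a bookkeeping of substitutions, and I expect no deep obstacle; the one point that requires genuine care is the choice $z\mapsto\beta$ (rather than $z\mapsto\gamma$) in the final case. Sending $z$ to the top would be fatal when $z$ also occurs as a length-$1$ summand of $\bu$, since then $\varphi(\bu)$ would itself jump to $\gamma$ and the contradiction would evaporate; mapping $z$ to the middle element $\beta$ is exactly strong enough to saturate the two-letter word $\bq$ to $\gamma$ yet weak enough to leave every summand of $\bu$ at or below $\beta$. Keeping track of which substitution isolates the failure of each subcondition — the top element to force $\varphi(\bq)=\gamma$, all other variables to $\alpha$ to hold $\varphi(\bu)$ down — is the whole content of the proof, and since the statement asserts only necessity, no converse derivation is needed.
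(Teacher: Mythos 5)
Your argument is correct, and it supplies something the paper itself does not: the paper merely cites this statement as \cite[Lemma 5.1]{yrzs} and gives no proof, so there is no in-paper argument to compare against. Your substitution-based verification is exactly the style the authors use for the analogous ``equational problem'' lemmas they do prove (compare the proof of Lemma \ref{lem4701} for $S_{47}$), and all three substitutions do what you claim: the all-$\alpha$ assignment pins $\varphi(\bu)$ at or below $\beta$ and forces $\ell(\bq)\le 2$; the $x\mapsto\gamma$ assignment handles $\ell(\bq)=1$; and the $z\mapsto\beta$ assignment handles $\ell(\bq)=2$, with your closing remark correctly identifying why $\beta$ rather than $\gamma$ is the right target. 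Two small points of hygiene. First, your blanket evaluation rule ``$\varphi(\bw)=\gamma$ as soon as some variable of $\bw$ is sent to $\beta$ or $\gamma$'' is false for $\ell(\bw)=1$ (a single variable sent to $\beta$ evaluates to $\beta$); you silently correct for this in the final case, but the rule as stated should carry the proviso $\ell(\bw)\ge 2$. Second, identifying $S_{59}$ is cleaner via the paper's own remark that $S_{59}\cong\{1,2,3\}\le S_{(4,367)}$ (a subalgebra, with $3\cdot 3=1$ and all other products equal to $2$); your ``quotient identifying the coatom with one minimal element'' only yields a congruence if the minimal element chosen is $4$, not $3$, though the resulting three-element chain you describe is in any case the correct algebra.
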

\begin{pro}\label{pro37201}
$\mathsf{V}(S_{(4, 372)})$ is the ai-semiring variety defined by the identities
\begin{align}
xy  & \approx x^2+y^2; \label{37202} \\
x_1x_2x_3 & \approx x_1x_2x_3+y;\label{37203} \\
x+xy& \approx x+xy+y. \label{37204}
\end{align}
\end{pro}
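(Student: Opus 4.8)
The plan is to follow the template of Proposition~\ref{pro27701}. First I would verify by direct inspection of the multiplication table that $S_{(4,372)}$ satisfies \eqref{37202}--\eqref{37204}. The only nonroutine products are those involving $4$: since $4\cdot4=3$ while every product of three or more factors collapses to $2$ (which is the additive top and a multiplicative zero), all three identities check out, for instance $x+xy\approx x+xy+y$ holds because $4+4\cdot4=4+3=1=1+4$ and otherwise $xy=2$ swallows everything. This yields the inclusion of $\mathsf{V}(S_{(4,372)})$ in the variety defined by \eqref{37202}--\eqref{37204}.

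For the reverse inclusion I would show every nontrivial identity of $S_{(4,372)}$ is derivable. By the usual decomposition it suffices to treat identities $\bu\approx\bu+\bq$ with $\bu=\bu_1+\cdots+\bu_n$ and $\bu_i,\bq\in X^+$, and since \eqref{37202} gives $xy\approx yx$ I may assume all words lie in $X_c^+$. By Lemma~\ref{lem5901} (which applies since $S_{59}\in\mathsf{V}(S_{(4,372)})$, i.e.\ every identity of $S_{(4,372)}$ holds in $S_{59}$), the argument splits into Case~1, where some $\bu_j$ has $\ell(\bu_j)\ge3$, and Case~2, where all $\bu_i$ have length at most $2$, $\ell(\bq)\le2$, and $c(\bq)\subseteq c(\bu)$ if $\ell(\bq)=1$ while $c(\bq)\subseteq c(L_2(\bu))$ if $\ell(\bq)=2$.

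Case~1 is immediate: substituting the word $\bu_j$ for $x_1x_2x_3$ and $\bq$ for $y$ in \eqref{37203} gives $\bu_j\approx\bu_j+\bq$, whence $\bu\approx\bu+\bu_j\approx\bu+\bu_j+\bq\approx\bu+\bq$. For Case~2 the basic tool is that for any $z\in c(L_2(\bu))$, say $zt\in\bu$, identity \eqref{37202} gives $\bu\approx\bu+zt\approx\bu+z^2+t^2$, and absorbing $t^2$ yields $\bu\approx\bu+z^2$. When $\ell(\bq)=2$, writing $\bq=xy$ with $x,y\in c(L_2(\bu))$, I then obtain $\bu\approx\bu+x^2\approx\bu+x^2+y^2\approx\bu+xy=\bu+\bq$ by one more use of \eqref{37202}.

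The hard part is Case~2 with $\ell(\bq)=1$, say $\bq=x$ with $x$ not already a summand. Here Lemma~\ref{lem5901} delivers only $x\in c(\bu)$, which is insufficient: the identity $x^2\approx x^2+x$ already fails in $S_{(4,372)}$ (take $x\mapsto4$, giving $3\ne 3+4=1$). So I would sharpen the necessary condition by a tailored evaluation exploiting $4\cdot4=3$: if $c(L_1(\bu))\cap c(L_2(\bu))=\emptyset$, then sending every variable of $c(L_2(\bu))$ to $4$ and every variable of $c(L_1(\bu))$ to $3$ makes every word of $\bu$ evaluate to $3$ while $x\mapsto4$, contradicting $\bu\approx\bu+x$. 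Hence there is $v\in c(L_1(\bu))\cap c(L_2(\bu))$, i.e.\ a length-one summand $v$ that also occurs in some $vw\in\bu$. Using the tool above I get $\bu\approx\bu+v^2$ and $\bu\approx\bu+x^2$, so by \eqref{37202} $\bu\approx\bu+v^2+x^2\approx\bu+vx$; then, with $v$ and $vx$ both present, the instance $v+vx\approx v+vx+x$ of \eqref{37204} gives $\bu\approx\bu+vx\approx\bu+vx+x\approx\bu+x$. The main obstacle is thus precisely this case, where the bare $S_{59}$-condition must be strengthened by hand through an evaluation using the element $4$, and the single variable $\bq=x$ has to be built up via $v^2$, $x^2$, and $vx$ before \eqref{37204} can absorb it.
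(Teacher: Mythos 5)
Your proposal is correct and follows essentially the same route as the paper: the same case split via Lemma~\ref{lem5901}, the same dispatch of long words by \eqref{37203}, the same handling of $\ell(\bq)=2$ through \eqref{37202}, and the same key application of \eqref{37204} after manufacturing a summand $vx$ with $v$ a length-one summand. The only cosmetic difference is that you obtain $c(L_1(\bu))\cap c(L_2(\bu))\neq\emptyset$ by a direct evaluation into $S_{(4,372)}$, whereas the paper extracts it from the subdirect factor $S_2$ via Lemma~\ref{lem201}; both are sound.
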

\begin{proof}
It is easy to verify that $S_{(4, 372)}$ satisfies the identities \eqref{37202}--\eqref{37204}.
In the remainder it is enough to prove that every ai-semiring identity of $S_{(4, 372)}$
is derivable from \eqref{37202}--\eqref{37204} and the identities defining $\mathbf{AI}$.
Let $\bu \approx \bu+\bq$ be such a nontrivial identity,
where $\bu=\bu_1+\bu_2+\cdots+\bu_n$ and $\bu_i, \bq \in X^+$, $1 \leq i \leq n$.
By the identity \eqref{37202}, one can derive the identity
\begin{align}
xy & \approx yx. \label{37201}
\end{align}
So we may assume that $\bu_i, \bq \in X_c^+$ for $1 \leq i \leq n$.
It is a routine matter to verify that $S_{(4, 372)}$ is isomorphic to
a subdirect product of $S_{2}$ and $S_{59}$.
So both $S_{2}$ and $S_{59}$ satisfy $\bu \approx \bu+\bq$.
By Lemma \ref{lem5901} we consider the following two cases.

\textbf{Case 1.} $\ell(\bu_j)\geq 3$ for some $\bu_j\in \bu$. Then
\[
\bu \approx \bu+\bu_j \stackrel{(\ref{37203})}\approx \bu+\bu_j+\bq \approx \bu+\bq.
\]

\textbf{Case 2.} $\ell(\bu_j)\leq 2$ for all $\bu_j\in \bu$, and $\ell(\bq)\leq 2$.

\textbf{Subcase 2.1.} $\ell(\bq)=1$. By Lemma \ref{lem5901} $c(\bq)\subseteq  c(\bu)$
and so there exists $\bu_k \in \bu$ such that $c(\bq)\subseteq  c(\bu_k)$.
Since $\bu \approx \bu+\bq$ is nontrivial, it follows from Lemma \ref{lem201}
that $c(L_1(\bu))\cap c(L_2(\bu))\neq\emptyset$ and $\ell(\bu_k)= 2$.
Take $x$ in $c(L_1(\bu))\cap c(L_2(\bu))$.
Then $xy\in L_2(\bu)$ for some $y\in X$.
Now we have
\begin{align*}
\bu
&\approx \bu+x+xy+\bu_k\\
&\approx \bu+x+xy+\bq\bq_1 &&(\text{by}~\eqref{37201})\\
&\approx \bu+x+x^2+y^2+\bq^2+\bq_1^2 &&(\text{by}~\eqref{37202})\\
&\approx \bu+x+x^2+\bq^2+y^2+\bq_1^2 \\
&\approx \bu+x+x\bq+y\bq_1 &&(\text{by}~\eqref{37202})\\
&\approx \bu+x+x\bq+\bq+y\bq_1. &&(\text{by}~\eqref{37204})
\end{align*}
This derives the identity $\bu\approx \bu+\bq$.

\textbf{Subcase 2.2.} $\ell(\bq)=2$. By Lemma \ref{lem5901} we obtain that $c(\bq)\subseteq  c(L_2(\bu))$.
Let us write $\bq=xy$ for some $x, y\in X$. Then $xx_1, yy_1 \in L_2(\bu)$ for some $x_1, y_1 \in X$ and so
\begin{align*}
\bu
&\approx \bu+xx_1+yy_1\\
&\approx \bu+x^2+x_1^2+y^2+y_1^2 &&(\text{by}~(\ref{37202}))\\
&\approx \bu+x_1^2+y_1^2+x^2+y^2 \\
&\approx \bu+x_1y_1+xy &&(\text{by}~(\ref{37202}))\\
&\approx \bu+x_1y_1+\bq.
\end{align*}
This implies the identity $\bu\approx \bu+\bq$.
\end{proof}

\section{Equational bases of some 4-element ai-semirings related to $S_4$}

In this section we provide equational bases for some 4-element ai-semirings that relate to $S_4$.
Let $\bv$ be an ai-semiring term such that $\bv=\bv_1+\bv_2+\cdots+\bv_k$,
where $\bv_i\in X^+$, $1\leq i \leq k$.
Then $p(\bv)$ denotes $p(\bv_1)+p(\bv_2)+\cdots+p(\bv_k)$.
For example, if $\bv=x+yxz+yy+xzy$, then $p(\bv)=yx+y+xz$.
The following result, which is \cite[Lemma 5.1]{rlzc},
provides a solution of the equational problem for $S_4$.

\begin{lem}\label{lem401}
Let $\bu\approx \bu+\bq$ be a nontrivial ai-semiring identity such that
$\bu=\bu_1+\bu_2+\cdots+\bu_n$ and $\bu_i, \bq \in X^+$, $1\leq i \leq n$.
Then $\bu\approx \bu+\bq$ is satisfied by $S_4$ if and only if
$c(\bq)\subseteq  c(\bu)$, $L_{\geq 2}(\bu)\neq \emptyset$,
and
\[
c(p(\bu))\cap t(\bu)=\emptyset \Rightarrow c(p(\bu+\bq))\cap t(\bu+\bq)=\emptyset.
\]
\end{lem}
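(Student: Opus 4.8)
The plan is to reduce the identity $\bu \approx \bu + \bq$ to a domination statement in the additive semilattice of $S_4$ and then read off the three conditions from a careful description of how words evaluate. Since $(S_4,+)$ is a commutative idempotent semigroup, its addition is the join of the associated semilattice order, and for any semiring homomorphism $\varphi\colon P_f(X^+)\to S_4$ one has $\varphi(\bu)=\varphi(\bu_1)+\cdots+\varphi(\bu_n)$, the join of the word-values. Hence $\bu\approx\bu+\bq$ holds in $S_4$ if and only if $\varphi(\bq)\le \varphi(\bu_1)+\cdots+\varphi(\bu_n)$ for every $\varphi$. The first and central step is therefore an \emph{evaluation lemma}: reading off the multiplication table of $S_4$, I would show that the value $\varphi(\bw)$ of a word $\bw\in X^+$ is determined by only three pieces of combinatorial data, namely the content $c(\bw)$, the last variable $t(\bw)$, and whether $t(\bw)\in c(p(\bw))$ (i.e.\ whether the last variable already occurs in the prefix). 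These are exactly the invariants that appear in the statement, and isolating them is what makes the characterization possible.

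With the evaluation lemma in hand, I would prove necessity by contraposition: if one of the three conditions fails, I exhibit a substitution $\varphi$ with $\varphi(\bq)\not\le\varphi(\bu)$. If $c(\bq)\not\subseteq c(\bu)$, pick $x\in c(\bq)\comp c(\bu)$ and send $x$ to a suitably large element while collapsing all variables of $\bu$ to the bottom, so that $\varphi(\bq)$ overshoots $\varphi(\bu)$. If $L_{\geq 2}(\bu)=\emptyset$, every $\bu_i$ is a single letter, and an assignment sending all variables to a common element then makes $\varphi(\bu)$ too small to dominate the value of the longer or repeated-last-letter word $\bq$. Finally, if $c(p(\bu))\cap t(\bu)=\emptyset$ but $c(p(\bu+\bq))\cap t(\bu+\bq)\neq\emptyset$, the offending reappearance of a last variable must involve $\bq$; the evaluation lemma says a single tailored assignment then forces $\varphi(\bq)$ to reach the top value of the semilattice, while the cleanliness hypothesis on $\bu$ keeps every $\varphi(\bu_i)$ strictly below the top under that same assignment --- again a failure of domination.

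For sufficiency I would assume all three conditions and verify $\varphi(\bq)\le\varphi(\bu)$ for an arbitrary $\varphi$ by case analysis on the value $\varphi(\bq)$ takes in $S_4$. The condition $c(\bq)\subseteq c(\bu)$ guarantees that every variable of $\bq$ is available among the words of $\bu$, which handles the cases where $\varphi(\bq)$ is a low value. The condition $L_{\geq 2}(\bu)\neq\emptyset$ supplies a word of length at least two in $\bu$ to match intermediate values of $\varphi(\bq)$ coming from genuine products. The remaining case is when $\varphi(\bq)$ equals the top element, which by the evaluation lemma can happen only when $t(\bq)\in c(p(\bq))$ or $\bq$ links a last variable of $\bu$ to a prefix; the implication then delivers a word of $\bu$ whose value is also the top under the same assignment, so domination still holds. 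Assembling the cases gives $\varphi(\bq)\le\varphi(\bu)$ throughout.

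The main obstacle is the evaluation lemma together with the top-element case of sufficiency. Computing $\varphi(\bw)$ for an arbitrary word and arbitrary assignment requires tracking how the multiplication of $S_4$ propagates through a product of variable-values and proving that only $c(\bw)$, $t(\bw)$, and the membership $t(\bw)\in c(p(\bw))$ survive; getting the bookkeeping right --- especially the interaction between the last variable and its reappearance in the prefix --- is the delicate part. Once that lemma is secured, the prefix/last-variable implication in the statement is precisely the combinatorial shadow of ``$\varphi(\bq)$ reaches the top only if some word of $\bu$ does too,'' and both directions fall out from the substitution constructions above.
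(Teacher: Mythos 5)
First, a point of comparison: the paper does not prove this lemma at all --- it is imported verbatim as \cite[Lemma 5.1]{rlzc} --- so there is no in-paper argument to measure you against. Your overall strategy is nonetheless the natural (and essentially only) one, and your key step is sound: writing $S_4=\{\top,a,b\}$ with $a,b$ the two minimal elements of the additive order, $a+b=\top$, $aa=a$, $ab=b$ and all other products equal to $\top$, one checks that $\varphi(\bw)=\varphi(t(\bw))$ exactly when every letter of $p(\bw)$ is sent to $a$ and $t(\bw)$ is not sent to $\top$, and $\varphi(\bw)=\top$ otherwise; so $\varphi(\bw)$ is indeed determined by $c(p(\bw))$ and $t(\bw)$, which is your evaluation lemma. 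The identity then reduces to the domination $\varphi(\bq)\le\varphi(\bu)$, as you say.

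Two of your concrete steps, however, fail as stated, and both failures trace to the same oversight: $(S_4,+)$ is not a chain, and the incomparability of its two minimal elements is precisely what the third condition of the lemma is policing. (i) In the necessity of the implication you claim a tailored assignment ``forces $\varphi(\bq)$ to reach the top.'' Take $\bu=xy$ and $\bq=x$ (here the implication fails, so the identity must fail): any $\varphi$ with $\varphi(\bu)\ne\top$ has $\varphi(x)=a$, hence $\varphi(\bq)=a$ can never equal $\top$; non-domination arises instead from $\varphi(\bq)=a\not\le b=\varphi(\bu)$ when $\varphi(y)=b$. (ii) Dually, in sufficiency the dangerous case is not ``$\varphi(\bq)=\top$'' but ``$\varphi(\bu)$ and $\varphi(\bq)$ are two \emph{different} minimal elements,'' and your sketch never addresses it; it is not handled by $c(\bq)\subseteq c(\bu)$ alone but needs the implication: if $\varphi(\bu)$ equals a minimal element $m$, then all of $c(p(\bu))$ is sent to $a$ and all of $t(\bu)$ to $m$, and one argues that either $c(p(\bu))\cap t(\bu)\neq\emptyset$, which forces $m=a$ and hence $\varphi(\bq)=a=m$, or else the implication places $c(p(\bq))$ inside $c(p(\bu))$ and $t(\bq)$ inside $t(\bu)$, giving $\varphi(\bq)=m$ directly. (Relatedly, $L_{\ge 2}(\bu)\neq\emptyset$ does no work in sufficiency --- for nontrivial identities it already follows from the other two conditions --- whereas your sketch assigns it a role it does not play.) None of this invalidates your plan, but the write-up must be redone with the two incomparable minimal elements in view; as drafted, it does not survive the example above.
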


\begin{pro}\label{pro28101}
$\mathsf{V}(S_{(4, 281)})$ is the ai-semiring variety defined by the identities
\begin{align}
x^2y& \approx xy; \label{28101}\\
x^2y^2&\approx x^2+y^2;\label{28103}\\
x+y^2& \approx x+xy^2;\label{28104}\\
x+yz& \approx x+yz+yx;\label{28105}\\
x^2+yz&\approx x^2+yz+xy.\label{28106}
\end{align}
\end{pro}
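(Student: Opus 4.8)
The plan is to follow the template of Propositions~\ref{pro27701} and~\ref{pro37201}: first establish soundness, then reduce completeness to a case analysis governed by a subdirect decomposition whose factors have known equational solutions. For soundness I would read off from the multiplication table that the only nontrivial products are $4\cdot 3 = 3$ and $4\cdot 4 = 4$ (every other product being $1$) and check directly that $S_{(4, 281)}$ satisfies \eqref{28101}--\eqref{28106}. A feature that distinguishes this case from the earlier propositions is that $3\cdot 4 = 1\neq 3 = 4\cdot 3$, so $S_{(4,281)}$ is \emph{noncommutative}; hence I cannot pass to $X_c^+$, and the last variable $t(\bw)$ and the prefix $p(\bw)$ of each word must be tracked throughout.

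For completeness it suffices, as usual, to derive from \eqref{28101}--\eqref{28106} every nontrivial identity $\bu\approx\bu+\bq$ with $\bu=\bu_1+\cdots+\bu_n$ and $\bu_i,\bq\in X^+$. The structural input I would use is that $S_{(4,281)}$ is isomorphic to a subdirect product of $S_4$ and $N_2$: the equivalence $\theta_1$ whose only nonsingleton class is $\{1,2\}$ and the equivalence $\theta_2$ whose only nonsingleton class is $\{1,3,4\}$ are both congruences, one checks $\theta_1\cap\theta_2=\Delta$, and the quotients are $S_4$ (three elements, via $\theta_1$) and $N_2$ (two elements, null multiplication, via $\theta_2$). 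Consequently $\bu\approx\bu+\bq$ holds in both $S_4$ and $N_2$.

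Testing in $N_2$, where every product, hence every word of length $\ge 2$, equals the bottom (additive neutral) element, shows that if $\ell(\bq)=1$, say $\bq=x$, then $x$ must already occur in $\bu$ as a length-one summand, so the identity is trivial; thus for a nontrivial identity $\ell(\bq)\ge 2$, and then $N_2$ imposes nothing further. Testing in $S_4$ and invoking Lemma~\ref{lem401} then yields $c(\bq)\subseteq c(\bu)$, $L_{\ge 2}(\bu)\neq\emptyset$, and the implication $c(p(\bu))\cap t(\bu)=\emptyset\Rightarrow c(p(\bu+\bq))\cap t(\bu+\bq)=\emptyset$. With these constraints I would derive $\bu\approx\bu+\bq$ by assembling $\bq$ from summands of $\bu$: identity \eqref{28101} collapses repeated prefix letters (so $x^ky\approx xy$ and $x^k\approx x^2$ for $k\ge 2$); \eqref{28103} and \eqref{28104} pass between products of squares and sums and let one prepend a letter; and \eqref{28105}, \eqref{28106} are the crucial construction tools, the former changing the last letter of a length-two word to any length-one summand of $\bu$, the latter prepending a letter in the presence of a square. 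The proof splits on whether $c(p(\bu))\cap t(\bu)=\emptyset$: when this intersection is nonempty the implication of Lemma~\ref{lem401} is vacuous and $\bq$ may be built freely from a seed in $L_{\ge 2}(\bu)$; when it is empty one must, using that implication, arrange the last letter $t(\bq)$ and the prefix letters $c(p(\bq))$ so as not to create a forbidden overlap.

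The step I expect to be the main obstacle is precisely this last, noncommutative bookkeeping: assembling an arbitrary $\bq$, possibly long and with repeated variables, one letter at a time from \eqref{28101}--\eqref{28106} while respecting the condition that governs exactly when a word can evaluate to $3$. I anticipate handling it by induction on $\ell(\bq)$, reducing $\bq$ to a normal form determined by the triple $\big(c(\bq),\,t(\bq),\,[\,t(\bq)\in c(p(\bq))\,]\big)$ and matching that form to an appropriate summand of $\bu$.
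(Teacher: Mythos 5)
Your setup matches the paper's: soundness by direct inspection of the table, then the observation that $S_4$ and $N_2$ satisfy every identity of $S_{(4,281)}$ (the paper gets this from the subalgebras $\{1,3,4\}\cong S_4$ and $\{1,2\}\cong N_2$ rather than from your quotient/subdirect decomposition, but the two routes deliver the same constraints), followed by Lemma~\ref{lem401} and a case split on whether $c(p(\bu))\cap t(\bu)=\emptyset$. Your proposed normal form for $\bq$, determined by $c(\bq)$, $t(\bq)$ and whether $t(\bq)\in c(p(\bq))$, is exactly the one the paper works with.

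The gap is that the derivations themselves are missing, and the one derived identity that makes your plan viable is never established. To reduce an arbitrary word to your normal form you need left-normality, $xyz\approx yxz$; identity \eqref{28101} alone only collapses adjacent repetitions (giving $x^ky\approx xy$) and does not let you permute the prefix of a word such as $xyxz$. The paper obtains $xyz\approx yxz$ by first noting that \eqref{28103} yields $x^2y^2\approx y^2x^2$ (its right-hand side is symmetric in $x$ and $y$) and then computing $xyz\approx x^2y^2z\approx y^2x^2z\approx yxz$; without this step your ``assembly'' of $\bq$ is unsupported. Beyond that, the constructions in the two cases are nontrivial and not sketched: in the case $c(p(\bu))\cap t(\bu)\neq\emptyset$ the paper uses \eqref{28105} to manufacture the word $p(\bu_{i_2})\bu_{i_1}$, squares it via \eqref{28101} and left-normality, absorbs all of $c(\bu)$ into it with \eqref{28104}, splits off a single square with \eqref{28103}, and only then invokes \eqref{28106} to append $\bq$ --- a sequence that goes well beyond your reading of \eqref{28105} as changing the last letter of a length-two word and of \eqref{28106} as prepending a letter. (The paper also needs no induction on $\ell(\bq)$; each case is a single global computation once the normal form is in hand.) As it stands the proposal is a correct roadmap whose flagged ``main obstacle'' is precisely the content of the proof.
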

\begin{proof}
It is easy to check that $S_{(4, 281)}$ satisfies the identities \eqref{28101}--\eqref{28106}.
Observe that the identity \eqref{28103} implies the identity
\begin{align}
x^2y^2& \approx y^2x^2, \label{25091801}
\end{align}
since the addition is commutative and the right-hand side of \eqref{28103} does not change
when $x$ and $y$ are swapped. Now we have
\[
xyz \stackrel{\eqref{28101}}\approx x^2yz \stackrel{\eqref{28101}}
\approx  x^2y^2z \stackrel{\eqref{25091801}}\approx y^2x^2z  \stackrel{\eqref{28101}}\approx yx^2z \stackrel{\eqref{28101}}\approx yxz.
\]
This derives the identity
\begin{align}
xyz& \approx yxz. \label{28102}
\end{align}
In the remainder we shall show that every ai-semiring identity of $S_{(4, 281)}$
is derivable from \eqref{28101}--\eqref{28106}, \eqref{28102} and the identities defining $\mathbf{AI}$.
Let $\bu \approx \bu+\bq$ be such a nontrivial identity,
where $\bu=\bu_1+\bu_2+\cdots+\bu_n$ and $\bu_i, \bq \in X^+$, $1 \leq i \leq n$.
It is easy to see that $N_2$ is isomorphic to $\{1, 2\}$ and so $N_2$ satisfies  $\bu \approx \bu+\bq$.
This implies that $\ell(\bq)\geq 2$.
Since $S_4$ is isomorphic to $\{1, 3, 4\}$, it follows that $S_4$ satisfies $\bu \approx \bu+\bq$.
By Lemma \ref{lem401} we have that $c(\bq)\subseteq c(\bu)$ and $L_{\geq 2}(\bu)\neq \emptyset$.

\textbf{Case 1.} $c(p(\bu))\cap t(\bu)=\emptyset$. Then by Lemma \ref{lem401} $c(p(\bu+\bq))\cap t(\bu+\bq)=\emptyset$.
This implies that $m(t(\bq), \bq)=1$ and so $t(\bq)\notin c(p(\bq))$. By the identities (\ref{28101}) and (\ref{28102}) we derive
\begin{equation}\label{id25032501}
\bq \approx x_1^2x_2^2\cdots x_k^2t(\bq),
\end{equation}
where $c(p(\bq))=\{x_1, x_2,\ldots, x_k\}$.
We may assume that $\{x_1, x_2,\ldots, x_r\}$ and $\{y_1, y_2,$ $\ldots, y_s\}$
are two disjoint subsets of $c(\bu)$ such that
\[
c(\bu)=\{x_1, x_2,\ldots, x_r\}\cup \{y_1, y_2,\ldots, y_s\}
\]
and
\[
t(\bu)=\{y_1, y_2,\ldots, y_s\},
\]
where $y_1=t(\bq)$. Now we have
\begin{align*}
\bu
&\approx \bu+x_{k+1}^2x_{k+2}^2 \cdots x_r^2x_1^2x_2^2\cdots x_k^2t(\bq) &&(\text{by}~\eqref{28101}, \eqref{28105}, \eqref{28102}) \\
&\approx \bu+(x_{k+1}^2x_{k+2}^2 \cdots x_r^2+x_1^2x_2^2\cdots x_k^2)t(\bq) &&(\text{by}~\eqref{28103})\\
&\approx \bu+x_{k+1}^2x_{k+2}^2 \cdots x_r^2t(\bq)+x_1^2x_2^2\cdots x_k^2t(\bq)\\
&\approx \bu+x_{k+1}^2x_{k+2}^2 \cdots x_r^2t(\bq)+\bq.    &&(\text{by}~\eqref{id25032501})
\end{align*}
This implies the identity $\bu \approx \bu+\bq$.

\textbf{Case 2.} $c(p(\bu))\cap t(\bu)\neq\emptyset$.
Then $t(\bu_{i_1})\in c(p(\bu_{i_2}))$ for some $\bu_{i_1}, \bu_{i_2} \in \bu$.
Assume that $c(\bu)=\{x_1, x_2,\ldots, x_m\}$, $c(\bq)=\{x_1, x_2,\ldots, x_k\}$ and $t(\bq)=x_k$. By the identities (\ref{28101})
and \eqref{28102} we derive
\begin{equation}\label{id25032601}
\bq \approx x_1^2x_2^2\cdots x_k^2
\end{equation}
or
\begin{equation}\label{id25032602}
\bq \approx x_1^2x_2^2\cdots x_{k-1}^2x_k.
\end{equation}
Now we have
\begin{align*}
\bu
&\approx \bu+\bu_{i_1}+\bu_{i_2} \\
&\approx \bu+\bu_{i_1}+p(\bu_{i_2})t(\bu_{i_2})\\
&\approx \bu+\bu_{i_1}+p(\bu_{i_2})t(\bu_{i_2})+p(\bu_{i_2})\bu_{i_1} &&(\text{by}~\eqref{28105})\\
&\approx \bu+\bu_{i_1}+p(\bu_{i_2})t(\bu_{i_2})+(p(\bu_{i_2})\bu_{i_1})^2. &&(\text{by}~(\ref{28101}), (\ref{28102}))
\end{align*}
This implies the identity
\[
\bu \approx \bu+(p(\bu_{i_2})\bu_{i_1})^2.
\]
Furthermore, we can deduce
\begin{align*}
\bu
&\approx \bu+\bu_1+\cdots+\bu_n+(p(\bu_{i_2})\bu_{i_1})^2 \\
&\approx \bu+\bu_1+\cdots+\bu_n+x_1^2x_2^2\cdots x_m^2 &&(\text{by}~\eqref{28101}, \eqref{28104}, \eqref{28102})\\
&\approx \bu+\bu_1+\cdots+\bu_n+x_1^2+x_1^2x_2^2\cdots x_m^2 &&(\text{by}~\eqref{28101}, \eqref{28103})\\
&\approx \bu+\bu_1+\cdots+\bu_n+x_1^2+x_1^2x_2^2\cdots x_m^2+\bq. &&(\text{by}~\eqref{28106}, \eqref{28102}, \eqref{id25032601}, \eqref{id25032602})
\end{align*}
This derives the identity $\bu \approx \bu+\bq$.
\end{proof}

\begin{remark}
It is easy to verify that $S_{(4, 281)}$ is isomorphic to a subdirect product of $S_{4}$ and $S_{21}$.
So $\mathsf{V}(S_{(4, 281)}) = \mathsf{V}(S_{4}, S_{21})$.
On the other hand, it is easy
to see that both $M_2$ and $N_2$ can be embedded into $S_{21}$.
Also, $S_{21}$ satisfies the equational basis of $\mathsf{V}(M_2, N_2)$ that can be found in \cite{sr}.
Thus $\mathsf{V}(S_{21})=\mathsf{V}(M_2, N_2)$ and so $\mathsf{V}(S_{(4, 281)})=\mathsf{V}(S_{4}, M_2, N_2)$.
Since $M_2$ can be embedded into $S_{4}$, we therefore have
\[
\mathsf{V}(S_{(4, 281)})=\mathsf{V}(S_{4}, N_2).
\]
\end{remark}
\begin{cor}
The ai-semiring $S_{(4, 279)}$ is finitely based.
\end{cor}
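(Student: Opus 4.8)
The plan is to recognize $S_{(4,279)}$ as the multiplicative dual of $S_{(4,281)}$ and then to transfer the finite basis property across this duality, using the equational basis already produced in Proposition~\ref{pro28101}.

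First I would compare the multiplication tables in Table~\ref{tb1}. Inspecting them shows that the table of $S_{(4,279)}$ is the transpose of the table of $S_{(4,281)}$: in $S_{(4,279)}$ the only nontrivial products are $3\cdot 4 = 3$ and $4\cdot 4 = 4$, while in $S_{(4,281)}$ the only nontrivial products are $4\cdot 3 = 3$ and $4\cdot 4 = 4$, every other product being $1$ in both algebras. Since the two semirings also share the same additive reduct (Figure~\ref{figure01}), this says precisely that $a\cdot_{279} b = b\cdot_{281} a$ for all $a,b$; that is, $S_{(4,279)}$ and $S_{(4,281)}$ have dual multiplications, exactly the situation already exploited for the pair $S_{(4,299)}$, $S_{(4,322)}$ earlier in the paper.

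Next I would invoke the standard principle that an ai-semiring is finitely based if and only if its multiplicative dual is. The reversal map on words of $X^+$ extends to a bijection on ai-semiring identities that sends the identities valid in a semiring $S$ exactly onto those valid in its dual, and this reversal commutes with derivations over $\mathbf{AI}$. Consequently, reversing every word in a finite equational basis of $\mathsf{V}(S_{(4,281)})$ produces a finite equational basis of $\mathsf{V}(S_{(4,279)})$.

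Since Proposition~\ref{pro28101} supplies a finite basis for $\mathsf{V}(S_{(4,281)})$, applying the duality principle with $S = S_{(4,281)}$ yields a finite basis for $\mathsf{V}(S_{(4,279)})$, and the corollary follows. There is no substantive obstacle here: the only work is the routine verification of the transpose relationship between the two tables, after which the duality argument is purely formal.
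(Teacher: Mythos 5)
Your proposal is correct and matches the paper's own argument: the paper likewise observes that $S_{(4,279)}$ and $S_{(4,281)}$ have dual multiplications and deduces the corollary directly from Proposition~\ref{pro28101}. Your identification of the transposed entries ($3\cdot 4=3$, $4\cdot 4=4$ in $S_{(4,279)}$ versus $4\cdot 3=3$, $4\cdot 4=4$ in $S_{(4,281)}$) and the word-reversal duality principle are exactly the intended justification.
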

\begin{proof}
It is easy to see that $S_{(4, 279)}$ and $S_{(4, 281)}$ have dual multiplications.
By Proposition $\ref{pro28101}$ we immediately deduce that $S_{(4, 279)}$ is finitely based.
\end{proof}

\begin{pro}
$S_{(4, 290)}$ and $S_{(4, 292)}$ are both finitely based.
\end{pro}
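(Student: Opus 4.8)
The plan is to exploit a duality together with a subdirect decomposition, reducing the problem to the already-established behaviour of $S_{(4, 281)}$. First I would observe that $S_{(4, 290)}$ and $S_{(4, 292)}$ have dual multiplications: transposing the Cayley table of $S_{(4, 290)}$ yields exactly that of $S_{(4, 292)}$. Hence, exactly as in the passage from $S_{(4, 281)}$ to $S_{(4, 279)}$ and from $S_{(4, 299)}$ to $S_{(4, 322)}$, it suffices to prove that one of them, say $S_{(4, 292)}$, is finitely based, and the other then follows immediately by duality.

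For $S_{(4, 292)}$ I would produce a subdirect decomposition into strictly smaller, already-classified factors. A direct check shows that the partition $\beta$ with classes $\{2\}$ and $\{1, 3, 4\}$, and the partition $\alpha$ with classes $\{1, 2\}$, $\{3\}$, $\{4\}$, are both congruences of $S_{(4, 292)}$, and that $\alpha \cap \beta = \Delta$. Thus $S_{(4, 292)}$ embeds subdirectly into $(S_{(4, 292)}/\alpha) \times (S_{(4, 292)}/\beta)$, where $S_{(4, 292)}/\beta$ is a two-element ai-semiring (finitely based by \cite{sr}) and $S_{(4, 292)}/\alpha$ is a three-element ai-semiring, hence isomorphic to some $S_i$ and finitely based by the classification in \cite{zrc}. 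Consequently $\mathsf{V}(S_{(4, 292)}) = \mathsf{V}(S_{(4, 292)}/\alpha,\, S_{(4, 292)}/\beta)$. Alternatively one may replace $\beta$ by the congruence $\gamma$ with classes $\{1, 3\}$, $\{2\}$, $\{4\}$, which also satisfies $\alpha \cap \gamma = \Delta$ and exhibits $S_{(4, 292)}$ as a subdirect product of two three-element semirings.

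The main obstacle is that the join of two finitely based varieties need not be finitely based, so the decomposition above does not by itself finish the proof; the real work is to show that this particular join is finitely based. I see two routes. The first, matching the method already used in this section for $S_{(4, 288)}$, $S_{(4, 299)}$ and $S_{(4, 330)}$, is to identify $\mathsf{V}(S_{(4, 292)})$ with $\mathsf{V}(S_{(4, m)})$ for some four-element semiring already proved finitely based in \cite{rlzc} or \cite{yrzs}. The second, mirroring Proposition \ref{pro28101}, is to write down an explicit finite basis and prove its completeness by solving the equational problem for $S_{(4, 292)}$. Here the key structural fact is that $S_{(4, 292)}$ differs from $S_{(4, 281)}$ only in the entry $2\cdot 2 = 2$: it still satisfies $x^2 y \approx xy$, but it now also satisfies $x + x^2 \approx x^2$, which fails in $S_{(4, 281)}$ at $x = 2$, reflecting that $2$ is a multiplicative idempotent and the additive maximum. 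I would therefore take the basis of Proposition \ref{pro28101}, suitably modified by such idempotency identities, as the candidate basis.

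The delicate point, and the step I expect to absorb most of the effort, is the completeness argument along this second route: one must reduce an arbitrary nontrivial identity $\bu \approx \bu + \bq$ of $S_{(4, 292)}$ to the candidate basis. Since $S_{(4, 292)}$ has $S_4$ and $S_2$ among its quotients (up to the collapsing described above), the admissible identities are constrained by Lemmas \ref{lem401} and \ref{lem201}; the plan is to run the same kind of case analysis as in Proposition \ref{pro28101}, splitting according to whether $\ell(\bu_i) \geq 3$ for some $\bu_i \in \bu$ and according to the interaction of $c(p(\bu))$ with $t(\bu)$, with the new identity $x + x^2 \approx x^2$ supplying the rewriting needed to handle the idempotent letter. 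I expect the bookkeeping across these cases, rather than any single deduction, to be the genuinely laborious part.
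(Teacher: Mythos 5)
Your skeleton matches the paper's: prove the result for $S_{(4,292)}$ and transfer it to $S_{(4,290)}$ by duality, after decomposing $S_{(4,292)}$ subdirectly. Your congruences check out, and your alternative pair $\alpha$, $\gamma$ (collapsing $1,2$ and $1,3$ respectively) gives exactly the paper's decomposition of $S_{(4,292)}$ as a subdirect product of $S_4$ (the subalgebra on $\{1,3,4\}$) and the three-element semiring $S_{22}$. You also correctly isolate the real difficulty, namely that a join of finitely based varieties need not be finitely based. The gap is that you never resolve that difficulty: both of your proposed routes are left as plans, and the entire content of the paper's proof is the execution of your route (a). Concretely, the paper observes that $M_2$ and $D_2$ embed into $S_{22}$ and that $S_{22}$ satisfies the finite equational basis of $\mathsf{V}(M_2,D_2)$ from \cite{sr}, so $\mathsf{V}(S_{22})=\mathsf{V}(M_2,D_2)$; since $M_2$ embeds into $S_4$, the join collapses to $\mathsf{V}(S_4,D_2)$, which is then identified with $\mathsf{V}(S_{(4,48)})$, already proved finitely based in \cite{rlzc}. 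Without producing this specific witness (or an equivalent citation), route (a) remains a declaration of intent rather than a proof.

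Moreover, the concrete suggestion you make for route (b) would fail as stated. The basis of Proposition \ref{pro28101} contains the identity $x^2y^2\approx x^2+y^2$, which does not hold in $S_{(4,292)}$: taking $x=2$ and $y=4$ gives $x^2y^2=2\cdot 4=1$ while $x^2+y^2=2+4=2$. Since adjoining further identities only shrinks the variety being axiomatized, no augmentation of that basis by identities such as $x+x^2\approx x^2$ can be a basis for $\mathsf{V}(S_{(4,292)})$; the candidate basis would have to be rebuilt from scratch, and with it the whole completeness argument. So route (b) is not merely ``laborious bookkeeping'' --- its proposed starting point is already unsound.
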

\begin{proof}
It is a routine matter to verify that $S_{(4, 292)}$ is isomorphic to a subdirect product of $S_{4}$ and $S_{22}$.
So $\mathsf{V}(S_{(4, 292)}) = \mathsf{V}(S_{4}, S_{22})$.
On the other hand, it is easy
to see that both $M_2$ and $D_2$ can be embedded into $S_{22}$.
Also, $S_{22}$ satisfies the equational basis of $\mathsf{V}(M_2, D_2)$ that can be found in \cite{sr}.
It follows that $\mathsf{V}(S_{22})=\mathsf{V}(M_2, D_2)$ and so $\mathsf{V}(S_{(4, 292)})=\mathsf{V}(S_{4}, M_2, D_2)$.
Since $M_2$ can be embedded into $S_{4}$, we therefore have
\[
\mathsf{V}(S_{(4, 292)})=\mathsf{V}(S_{4}, D_2).
\]
Finally, it follows that $\mathsf{V}(S_{(4, 48)})=\mathsf{V}(S_{4}, D_2)$ can be found in \cite{rlzc} and so
$\mathsf{V}(S_{(4, 292)})=\mathsf{V}(S_{(4, 48)})$. Thus $S_{(4, 292)}$ is finitely based.
Notice that $S_{(4, 290)}$ and $S_{(4, 292)}$ have dual multiplications.
So $S_{(4, 290)}$ is also finitely based.
\end{proof}

\begin{pro}
$S_{(4, 303)}$ and $S_{(4, 324)}$ are both finitely based.
\end{pro}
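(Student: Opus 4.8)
The plan is to realize $S_{(4,303)}$ as a subdirect product having $S_4$ as one factor, exactly as was done for $S_{(4,281)}$ and $S_{(4,292)}$, and then to settle $S_{(4,324)}$ by multiplicative duality. First I would record the two structural features of the multiplication table of $S_{(4,303)}$ that drive everything: the rows indexed by $1,2,3$ coincide, so that $\{1,3,4\}$ is a subalgebra isomorphic to $S_4$, while $2$ is simultaneously a right zero and the additive top. These features point to two congruences. I would check that $\theta=\{\{1,2\},\{3\},\{4\}\}$ and $\theta'=\{\{1,3\},\{2\},\{4\}\}$ are both congruences of $S_{(4,303)}$, that $S_{(4,303)}/\theta\cong S_4$ (the retraction sending $2\mapsto 1$ onto the subalgebra $\{1,3,4\}$), and that $\theta\cap\theta'=\Delta$. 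This presents $S_{(4,303)}$ as a subdirect product of $S_4$ and the three-element factor $S_j:=S_{(4,303)}/\theta'$, matching the format of the earlier propositions. (If one prefers a two-element second factor, replacing $\theta'$ by $\{\{2\},\{1,3,4\}\}$ works equally well and leads directly to $\mathsf{V}(S_4,R_2)$.)

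Having fixed such a decomposition, the next step is to identify the resulting variety. Working with the three-element factor $S_j$, I would verify that both $M_2$ and $R_2$ embed into $S_j$ and that $S_j$ satisfies the equational basis of $\mathsf{V}(M_2,R_2)$ recorded in \cite{sr}; this yields $\mathsf{V}(S_j)=\mathsf{V}(M_2,R_2)$ and hence
\[
\mathsf{V}(S_{(4,303)})=\mathsf{V}(S_4,S_j)=\mathsf{V}(S_4,M_2,R_2).
\]
Since $M_2$ embeds into $S_4$ (as already used in the remark following Proposition \ref{pro28101}), the generator $M_2$ is redundant, so that $\mathsf{V}(S_{(4,303)})=\mathsf{V}(S_4,R_2)$. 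The final ingredient is to invoke the computation in \cite{rlzc} that $\mathsf{V}(S_4,R_2)$ coincides with $\mathsf{V}(S_{(4,m)})$ for the appropriate small index $m$ from the first type, which is finitely based; this gives that $S_{(4,303)}$ is finitely based.

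Finally, I would observe that the multiplication table of $S_{(4,324)}$ is the transpose of that of $S_{(4,303)}$ (equivalently, the two have the same additive reduct and opposite multiplications), so that $S_{(4,324)}$ is the opposite semiring of $S_{(4,303)}$. Dualizing a finite equational basis of $\mathsf{V}(S_{(4,303)})$ then yields a finite basis for $\mathsf{V}(S_{(4,324)})$, exactly as in the treatment of $S_{(4,322)}$ and $S_{(4,299)}$; hence $S_{(4,324)}$ is finitely based as well. The only genuinely delicate point is the bookkeeping in the middle step: one must check that the chosen congruences really do intersect to the identity and that the three-element factor is the semiring whose variety is precisely $\mathsf{V}(M_2,R_2)$ rather than something larger, so that the reduction to $\mathsf{V}(S_4,R_2)$ and the citation to \cite{rlzc} are legitimate; the duality step is then routine.
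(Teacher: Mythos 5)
Your proposal is correct and follows essentially the same route as the paper: decompose $S_{(4,303)}$ as a subdirect product of $S_4$ and a three-element factor (which the paper identifies as $S_{29}$), show that factor generates $\mathsf{V}(M_2,R_2)$ via the basis from \cite{sr}, absorb $M_2$ into $S_4$ to get $\mathsf{V}(S_{(4,303)})=\mathsf{V}(S_4,R_2)$, invoke \cite[Remark 5.4, Proposition 5.3]{rlzc} for $S_{(4,30)}$, and finish $S_{(4,324)}$ by multiplicative duality. The extra detail you supply (the explicit congruences and the alternative two-element factor) is sound and merely fills in what the paper leaves as routine verification.
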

\begin{proof}
It is easily verified that $S_{(4, 303)}$ is isomorphic to a subdirect product of $S_{4}$ and $S_{29}$.
So $\mathsf{V}(S_{(4, 303)}) = \mathsf{V}(S_{4}, S_{29})$.
On the other hand, it is easy
to see that both $M_2$ and $R_2$ can be embedded into $S_{29}$.
Also, $S_{29}$ satisfies the equational basis of $\mathsf{V}(M_2, R_2)$ that can be found in \cite{sr}.
Hence $\mathsf{V}(S_{29})=\mathsf{V}(M_2, R_2)$ and so $\mathsf{V}(S_{(4, 303)})=\mathsf{V}(S_{4}, R_2, M_2)$.
Since $M_2$ can be embedded into $S_{4}$, we therefore have
\[
\mathsf{V}(S_{(4, 303)})=\mathsf{V}(S_{4}, R_2).
\]
From \cite[Remark 5.4]{rlzc} we know that $\mathsf{V}(S_{(4, 30)})=\mathsf{V}(S_{4}, R_2)$
and so
$\mathsf{V}(S_{(4, 303)})=\mathsf{V}(S_{(4, 30)})$.
By \cite[Proposition 5.3]{rlzc} it follows that $S_{(4, 303)}$ is finitely based.
Notice that $S_{(4, 324)}$ and $S_{(4, 303)}$ have dual multiplications.
We immediately deduce that $S_{(4, 324)}$ is finitely based.
\end{proof}

\begin{pro}
$S_{(4, 301)}$ and $S_{(4, 325)}$ are both finitely based.
\end{pro}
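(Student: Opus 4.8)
The plan is to follow the subdirect-decomposition strategy already used above for $S_{(4, 303)}$. The first observation is that $S_{(4, 301)}$ and $S_{(4, 325)}$ have dual multiplications: transposing the Cayley table of $S_{(4, 301)}$ produces exactly that of $S_{(4, 325)}$. Hence it suffices to establish the finite basis property for one of them, and I would work with $S_{(4, 325)}$, because its multiplicative structure decomposes through $S_4$ itself rather than through the dual of $S_4$; the claim for $S_{(4, 301)}$ then follows at once by duality.

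Next I would produce two congruences on $S_{(4, 325)}$. Let $\theta_1$ be the equivalence collapsing $\{1, 3, 4\}$ to a single class and leaving $\{2\}$ separate; inspecting the Cayley table shows that $\theta_1$ is a congruence and that $S_{(4, 325)}/\theta_1 \cong L_2$. Let $\theta_2$ be the equivalence collapsing $\{1, 2\}$ and keeping $3$ and $4$ apart; one checks that $\theta_2$ is a congruence and that the induced multiplication on $\{1,2\},\{3\},\{4\}$ (with $\{1,2\}$ absorbing, and $3\cdot 3 = 3\cdot 4 = \{1,2\}$, $4\cdot 3 = \{3\}$, $4\cdot 4 = \{4\}$) is precisely that of $S_4$, so $S_{(4, 325)}/\theta_2 \cong S_4$. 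Since $\theta_1 \cap \theta_2$ is the identity relation, the map $s \mapsto (s/\theta_2,\, s/\theta_1)$ embeds $S_{(4, 325)}$ subdirectly into $S_4 \times L_2$, whence $\mathsf{V}(S_{(4, 325)}) = \mathsf{V}(S_4, L_2)$.

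I would then identify $\mathsf{V}(S_4, L_2)$ with the variety generated by a suitable first-type $4$-element ai-semiring already treated in \cite{rlzc}, exactly as $\mathsf{V}(S_4, R_2)$ was identified there with $\mathsf{V}(S_{(4, 30)})$ in the proof for $S_{(4, 303)}$. I note that the join cannot be simplified away: $S_4$ satisfies $xy + y \approx xy$ whereas $L_2$ does not, so $L_2 \notin \mathsf{V}(S_4)$ and the factor $L_2$ is genuinely needed. Once the matching variety is exhibited and cited as finitely based, $S_{(4, 325)}$ is finitely based, and so is $S_{(4, 301)}$ by duality.

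The two congruence verifications and the subdirect embedding are routine manipulations of the multiplication table, and the passage to $S_{(4, 301)}$ is free. The main obstacle is the identification step: pinning down the precise first-type semiring whose variety equals $\mathsf{V}(S_4, L_2)$ and invoking its finite-basis proof from \cite{rlzc}. Should no single such generator be recorded there, the fallback is to replace $L_2$ by a three-element quotient $S_j$ of $S_{(4, 325)}$, express $\mathsf{V}(S_j)$ as a join of two-element ai-semirings via \cite{sr}, and absorb those factors that embed into $S_4$, mirroring the handling of $S_{29}$ for $S_{(4, 303)}$.
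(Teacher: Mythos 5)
Your proposal is correct and follows essentially the same route as the paper: a subdirect decomposition reducing $\mathsf{V}(S_{(4,325)})$ to $\mathsf{V}(S_4, L_2)$, followed by identification with a finitely based variety from \cite{rlzc} and duality for $S_{(4,301)}$. The only differences are cosmetic: the paper factors through the three-element quotient $S_{23}$ and then absorbs $M_2$ into $S_4$, whereas you decompose directly into $S_4\times L_2$ (your two congruences do check out), and the citation you leave open is exactly what the paper supplies, namely \cite[Remark 5.7]{rlzc} giving $\mathsf{V}(S_{(4,47)})=\mathsf{V}(S_4,L_2)$ together with \cite[Proposition 5.6]{rlzc}.
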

\begin{proof}
It is easy to check that $S_{(4, 325)}$ is isomorphic to a subdirect product of $S_{4}$ and $S_{23}$.
So $\mathsf{V}(S_{(4, 325)}) = \mathsf{V}(S_{4}, S_{23})$.
On the other hand, it is easy
to see that both $M_2$ and $L_2$ can be embedded into $S_{23}$.
Also, $S_{23}$ satisfies the equational basis of $\mathsf{V}(M_2, L_2)$ that can be found in \cite{sr}.
This implies that $\mathsf{V}(S_{23})=\mathsf{V}(M_2, L_2)$ and so $\mathsf{V}(S_{(4, 325)})=\mathsf{V}(S_{4}, M_2, L_2)$.
Notice that $M_2$ can be embedded into $S_{4}$. We therefore have
\[
\mathsf{V}(S_{(4, 325)})=\mathsf{V}(S_{4}, L_2).
\]
By \cite[Remark 5.7]{rlzc} we have that $\mathsf{V}(S_{(4, 47)})=\mathsf{V}(S_{4}, L_2)$
and so $\mathsf{V}(S_{(4, 325)})=\mathsf{V}(S_{(4, 47)})$.
Now it follows from  \cite[Proposition 5.6]{rlzc} that $S_{(4, 325)}$ is finitely based.
Since $S_{(4, 301)}$ and $S_{(4, 325)}$ have dual multiplications,
we immediately deduce that $S_{(4, 301)}$ is also finitely based.
\end{proof}

Let $\bp$ be a word. Then $S_2(\bp)$ denotes the set of all subwords of length $2$ of $\bp$.
If $\bv$ is an ai-semiring term such that $\bv=\bv_1+\bv_2+\cdots+\bv_k$ and $\bv_i\in X^+$, $1\leq i \leq k$,
we shall use $S_2(\bv)$ to denote the set $\bigcup_{1\leq i \leq k}S_2(\bv_i)$.
The following result can be found in \cite[Lemma 3.7]{yrzs}.
\begin{lem}\label{lem5301}
Let $\bu\approx \bu+\bq$ be a nontrivial ai-semiring identity such that
$\bu=\bu_1+\bu_2+\cdots+\bu_n$ and $\bu_i, \bq \in X_c^+$, $1\leq i \leq n$.
Suppose that $\bu\approx \bu+\bq$ holds in $S_{53}$.
Then $L_{\geq 2}(\bu)\neq \emptyset$, $c(\bq)\subseteq c(\bu)$,
and for any $\bw\in S_2(\bq)$ there exists $\bw'\in S_2(\bu)$ such that $c(\bw')\subseteq c(\bw)$.
\end{lem}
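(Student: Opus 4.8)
The plan is to reduce the whole statement to one combinatorial evaluation rule for $S_{53}$ and then to test the identity $\bu\approx\bu+\bq$ against three carefully chosen substitutions. First I would record the structure of $S_{53}$, read off from its Cayley table: its additive reduct is a three-element chain, say $e<a<z$, and its multiplication satisfies $e\cdot e=e$, $e\cdot a=a\cdot e=a$, $a\cdot a=z$, with $z$ absorbing. (Since $\mathsf V(S_{(4,356)})=\mathsf V(S_{53})$, one may equivalently carry out every verification inside $S_{(4,356)}$.) From this I extract the evaluation rule: for any word $\bw$ and any homomorphism $\varphi\colon P_f(X^+)\to S_{53}$,
\[
\varphi(\bw)=
\begin{cases}
z & \text{if some letter of }\bw\text{ maps to }z,\text{ or }{\ge}\,2\text{ letters (with multiplicity) map to }a,\\
a & \text{if no letter maps to }z\text{ and exactly one letter maps to }a,\\
e & \text{if every letter maps to }e.
\end{cases}
\]
Because the additive order is the chain $e<a<z$, we have $\varphi(\bu)=\max_i\varphi(\bu_i)$, so the hypothesis that $\bu\approx\bu+\bq$ holds in $S_{53}$ is exactly the assertion that $\varphi(\bq)\le\varphi(\bu)$ for every $\varphi$.

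With the rule in hand, the first two conclusions follow by contradiction. For $c(\bq)\subseteq c(\bu)$: were there $x\in c(\bq)\setminus c(\bu)$, send $x\mapsto a$ and all other variables to $e$; then every letter of $\bu$ maps to $e$, so $\varphi(\bu)=e$, whereas $\bq$ contains $x$ and no letter mapping to $z$, giving $\varphi(\bq)\ge a>e$, a contradiction. For $L_{\ge 2}(\bu)\neq\emptyset$: if every $\bu_i$ were a single variable, send every variable to $a$; then $\varphi(\bu)=a$, forcing $\varphi(\bq)\le a$ and hence $\ell(\bq)=1$ (any word of length $\ge 2$ evaluated at all-$a$ yields $z$). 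Then $\bq$ is a single variable lying in $c(\bq)\subseteq c(\bu)$, so $\bq$ equals some $\bu_i$, contradicting nontriviality.

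The main point, and the step I expect to demand the most care, is the subword condition. Fix $\bw\in S_2(\bq)$, a length-two sub-multiset of $\bq$, and send every variable in $c(\bw)$ to $a$ and every other variable to $e$. The two letters of $\bw$ then contribute at least two $a$-letters to $\bq$, so the rule gives $\varphi(\bq)=z$; since $\varphi(\bq)=\max_i\varphi(\bu_i)$ and $z$ is the top, some $\varphi(\bu_i)=z$. As no variable maps to $z$ here, this is possible only because $\bu_i$ contains at least two letters (with multiplicity) drawn from $c(\bw)$; those two letters form a $\bw'\in S_2(\bu_i)\subseteq S_2(\bu)$ with $c(\bw')\subseteq c(\bw)$. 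One must split this into the case $\bw=xy$ with $x\neq y$ (here $c(\bw)=\{x,y\}$ and $\bw'$ may be any of $xx,xy,yy$) and the case $\bw=x^2$ (here $c(\bw)=\{x\}$, only $x$ maps to $a$, so the two $a$-letters of $\bu_i$ must be two copies of $x$ and $\bw'=x^2$); in both the inclusion $c(\bw')\subseteq c(\bw)$ is immediate. The genuinely delicate part is exactly this multiplicity bookkeeping in the commutative free semigroup $X_c^+$, together with the initial verification of the evaluation rule from the Cayley table; every remaining step is a direct application of that rule.
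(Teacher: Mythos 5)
Your argument is correct, but there is nothing in this paper to compare it against: the paper does not prove Lemma~\ref{lem5301} at all, it simply imports it as \cite[Lemma 3.7]{yrzs} from the companion paper. What you have supplied is therefore a self-contained replacement for that citation. Your identification of $S_{53}$ (as the subalgebra $\{3<1<2\}$ of $S_{(4,356)}$, with $3$ multiplicatively neutral on $\{3,1\}$, $1\cdot 1=2$, and $2$ absorbing) matches the Cayley table of $S_{(4,356)}$ given in Table~\ref{tb1}, and your evaluation rule --- a word evaluates to the top element iff some letter maps to $2$ or at least two letters (with multiplicity) map to $1$ --- is the right invariant; since the additive reduct is a chain, validity of $\bu\approx\bu+\bq$ is indeed equivalent to $\varphi(\bq)\le\varphi(\bu)$ for all $\varphi$, and each of the three conclusions then falls to the substitution you name. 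The only blemishes are cosmetic: the displayed equality ``$\varphi(\bq)=\max_i\varphi(\bu_i)$'' in the subword step should read $\varphi(\bq)\le\varphi(\bu)=\max_i\varphi(\bu_i)$ (the intended conclusion still follows because $z$ is the top element), and your reading of $S_2(\cdot)$ as sub-multisets of size two is the correct one here precisely because the lemma's hypotheses place $\bu_i,\bq\in X_c^+$, so no contiguity issue arises. No gap.
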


\begin{pro}\label{pro35701}
$\mathsf{V}(S_{(4, 357)})$ is the ai-semiring variety defined by the identities
\begin{align}
xyz &\approx yxz; \label{35702}\\
xy &\approx xy+y; \label{35703}\\
x^2y+z & \approx x^2y+z+xz;\label{35707}\\
x_1x_2+x_2x_3+x_4x_5 & \approx x_1x_2+x_2x_3+x_4x_5+x_4;\label{35710}\\
x_1x_2+x_2x_3+x_1x_3 & \approx x_1x_2+x_2x_3+x_1x_3+x_1x_2x_3;\label{35711}\\
x_1x_3+x_2x_3+x_1x_2x_4 & \approx x_1x_3+x_2x_3+x_1x_2x_4+x_1x_2x_3;\label{35712}\\
x_1x_2+x_2x_3+x_4x_5x_6x_7 & \approx x_1x_2+x_2x_3+x_4x_5x_6x_7+x_5x_6;\label{35714}\\
x_1x_2+x_2x_3+x_4x_5^2x_6+x_7x_8x_9 & \approx x_1x_2+x_2x_3+x_4x_5^2x_6+x_7x_8x_9+x_5x_8,\label{35715}
\end{align}
where $x_1$ may be empty in $(\ref{35710})$,
$x_1$, $x_4$ and $x_7$ may be empty in $(\ref{35714})$, $x_1$, $x_4$, $x_6$, $x_7$ and $x_9$ may be empty in $(\ref{35715})$.
\end{pro}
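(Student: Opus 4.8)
The plan is to argue in the two stages standard for these equational-basis theorems. First I would verify by direct computation that $S_{(4,357)}$ satisfies each of $(\ref{35702})$--$(\ref{35715})$; this is routine but lengthy, and is greatly shortened by the observation that $2$ is at once the additive top and a multiplicative zero, so that any term whose value meets $2$ forces that side of an identity to equal $2$. The content of the proposition is the converse: every nontrivial identity $\bu\approx\bu+\bq$ of $S_{(4,357)}$, with $\bu=\bu_1+\cdots+\bu_n$ and $\bu_i,\bq\in X^+$, must be shown derivable from $(\ref{35702})$--$(\ref{35715})$ and the axioms of $\mathbf{AI}$.

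For the converse I would first use $(\ref{35702})$ to sort every word except for its last letter, so that each $\bu_i$ and $\bq$ may be taken in the normal form determined by the multiset of its non-final letters together with its value under $t(\cdot)$. Next I would check that $S_{(4,357)}$ is a subdirect product of two three-element semirings, one of which generates $\mathsf{V}(S_{53})$ and the other of which records the non-commutative data attached to $3\cdot4=4$. Because $S_{53}$ then lies in $\mathsf{V}(S_{(4,357)})$, it satisfies $\bu\approx\bu+\bq$, and Lemma $\ref{lem5301}$ supplies the three structural constraints $L_{\geq2}(\bu)\neq\emptyset$, $c(\bq)\subseteq c(\bu)$, and---decisively---that for every length-two subword $\bw$ of $\bq$ there is a length-two subword $\bw'$ of $\bu$ with $c(\bw')\subseteq c(\bw)$; the second factor contributes the complementary information about last letters and about which two-letter blocks may be introduced.

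With these constraints in place I would run a case analysis on the shape of $\bq$, invoking in each configuration the identity built to serve as its production rule. In outline: $(\ref{35703})$ and $(\ref{35710})$ introduce short summands from an existing two-letter seed, $(\ref{35707})$ handles the case in which a square $x^2$ is available, $(\ref{35711})$ and $(\ref{35712})$ assemble a three-letter word out of the pairwise blocks furnished by the subword condition, and $(\ref{35714})$, $(\ref{35715})$ dispatch the longer words---the last being the delicate case in which a squared letter and a separate three-letter block must cooperate to emit the new pair $x_5x_8$. In each case the aim is to enlarge $\bu$ to $\bu+\bq$ by adjoining exactly the summands certified present by the constraints and then deleting the auxiliary terms.

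The step I expect to be the main obstacle is this final case analysis. The difficulty is that the subword condition of Lemma $\ref{lem5301}$ only guarantees a block $\bw'$ with $c(\bw')\subseteq c(\bw)$, rather than $\bw$ itself, so before the assembling identities $(\ref{35711})$--$(\ref{35715})$ can be applied one must first manufacture the missing letters in the correct position relative to $t(\bq)$, using $(\ref{35703})$ and $(\ref{35707})$. Organizing the bookkeeping that tracks which letters are available, and where, is where the real work of the proof will lie.
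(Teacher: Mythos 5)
Your high-level plan coincides with the paper's: verify the identities, decompose $S_{(4,357)}$ as a subdirect product of $S_{53}$ and a second three-element factor (the paper identifies it as $S_4$), extract structural constraints from Lemma \ref{lem5301}, and then use the listed identities as production rules to adjoin $\bq$. However, the proposal stops exactly where the proof begins. The entire content of the argument is the mechanism by which the word $\bq=x_1x_2\cdots x_n$ is assembled from the two-letter blocks that Lemma \ref{lem5301} certifies, and you explicitly defer this ("organizing the bookkeeping\dots is where the real work of the proof will lie") without supplying it. The paper's mechanism is concrete and nontrivial: first derive $\bu\approx\bu+x_ix_n$ for every non-final letter $x_i$ paired with the last letter $x_n$ (using \eqref{35703}, \eqref{35707}, \eqref{35714}, \eqref{35715} depending on whether the certified block is $x_i^2$, $x_ix_n$, etc.); then derive $\bu\approx\bu+x_ix_j\bp_{ij}$ for each pair of non-final letters; and finally run an induction on $k$ showing $\bu\approx\bu+x_1\cdots x_k(x_{k+1}+\cdots+x_{n-1})x_n$, where the inductive step combines \eqref{35702} and \eqref{35712} to splice one more letter into the prefix. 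Taking $k=n-2$ yields $\bu\approx\bu+\bq$. Without this induction (or an equivalent), the claim that the identities suffice is unestablished, so the proposal has a genuine gap rather than merely omitted routine detail.

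A second, related omission: you never invoke the solution of the equational problem for the second factor (Lemma \ref{lem401} for $S_4$). That lemma supplies the implication $c(p(\bu))\cap t(\bu)=\emptyset\Rightarrow c(p(\bu+\bq))\cap t(\bu+\bq)=\emptyset$, which is what forces $m(t(\bq),\bq)=1$ in the first case and dictates the entire two-case architecture of the derivation (the treatment of $t(\bq)$ is genuinely different in the two cases, and different identities from the list are needed in each). Your phrase that the second factor "contributes the complementary information about last letters" gestures at this but does not state the condition, and without it the case split cannot be set up. You would also need to handle $\ell(\bq)=1$ separately at the start of each case, which the outline does not mention.
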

\begin{proof}
It is a routine matter to verify that $S_{(4, 357)}$ satisfies the identities \eqref{35702}--\eqref{35715}.
In the remainder we shall show that every ai-semiring identity of $S_{(4, 357)}$
is derivable from \eqref{35702}--\eqref{35715} and the identities defining $\mathbf{AI}$.
Let $\bu \approx \bu+\bq$ be such a nontrivial identity, where
$\bu=\bu_1+\bu_2+\cdots+\bu_n$ and $\bu_i, \bq \in X^+$, $1 \leq i \leq n$.
It is easy to check that $S_{(4, 357)}$ is isomorphic to a subdirect product of $S_{4}$ and $S_{53}$.
So both $S_{4}$ and $S_{53}$ satisfy $\bu \approx \bu+\bq$.
By Lemma \ref{lem401} we have that $c(\bq)\subseteq  c(\bu)$ and $L_{\geq 2}(\bu)\neq \emptyset$.

\textbf{Case 1.} $c(p(\bu))\cap t(\bu)=\emptyset$.
Then by Lemma \ref{lem401} $c(p(\bu+\bq))\cap t(\bu+\bq)=\emptyset$, and so $m(t(\bq), \bq)=1$.
If $\ell(\bq)=1$, then there exists $\bu_j\in L_{\geq 2}(\bu)$ such that $\bu_j=p(\bu_j)\bq$, and so
\[
\bu \approx \bu+\bu_j \approx \bu+p(\bu_j) \bq \stackrel{\eqref{35703}}\approx \bu+p(\bu_j)\bq+\bq\approx \bu+\bq.
\]
Now consider the case that $\ell(\bq)\geq 2$.
Then $\bq=x_1x_2\cdots x_n$ for some $x_1, x_2, \ldots, x_n \in X$ and $n\geq 2$,
and so $x_n\neq x_i$ for all $1\leq i <n$.

Let $1\leq i <n$. It follows from Lemma \ref{lem5301} that
there exists $\bw_i\in S_2(\bu)$ such that $c(\bw_i)\subseteq \{x_i, x_n\}$ and so $\bw_i=x_i^2$ or $\bw_i=x_ix_n$.
If $\bw_i=x_i^2$, then there exists $\bu_k \in \bu$ such that
$\bu_k=x_i^2\bu_k'$ for some $\bu_k' \in X^+$.
Since $c(\bq)\subseteq c(\bu)$, we have $\bu_\ell=p(\bu_\ell)x_n$ for some $\bu_\ell \in \bu$, and so

\begin{align*}
\bu
&\approx \bu+\bu_k+\bu_\ell \\
&\approx \bu+x_i^2\bu_k'+p(\bu_\ell)x_n\\
&\approx \bu+x_i^2\bu_k'+p(\bu_\ell)x_n+x_ip(\bu_\ell)x_n &&(\text{by}~\eqref{35707})\\
&\approx \bu+x_i^2\bu_k'+p(\bu_\ell)x_n+p(\bu_\ell)x_ix_n &&(\text{by}~\eqref{35702})\\
&\approx \bu+x_i^2\bu_k'+p(\bu_\ell)x_n+p(\bu_\ell)x_ix_n+x_ix_n. &&(\text{by}~\eqref{35703})
\end{align*}
This implies the identity $\bu \approx \bu+x_ix_n$.
If $\bw_i=x_ix_n$, then there exists $\bu_k \in \bu$ such that
$\bu_k=\bu_k'x_ix_n$ for some $\bu_k'\in X^*$.
Now we have
\[
\bu \approx \bu+\bu_k \approx \bu+\bu_k'x_ix_n \stackrel{(\ref{35703})}\approx \bu+\bu_k'x_ix_n+x_ix_n.
\]
So we can derive the identity $\bu \approx \bu+x_ix_n$.

Let $i$ and $j$ be distinct numbers such that $1\leq i, j<n$.
By Lemma \ref{lem5301} there exists $\bu_t\in \bu$ such that
$c(\bw)\subseteq \{x_i, x_j\}$ for some $\bw\in S_2(p(\bu_t))$.
It is easy to see that $\bw=x_i^2$ or $\bw=x_j^2$ or $\bw=x_ix_j$.
If $\bw=x_ix_j$, then by \eqref{35702} we may write $\bu_t=x_ix_j\bu_t'$ for some $\bu_t'\in X^+$.
If $\bw=x_i^2$, then $\bu_t=x_i^2\bu_t'$ for some $\bu_t'\in X^+$.
Also, there exists $\bu_r\in \bu$ such that $\bu_r=x_j\bu_r'$ for some $\bu_r'\in X^+$.
Now we have
\[
\bu \approx \bu+\bu_t+\bu_r\approx \bu+x_i^2\bu_t'+x_j\bu_r'\stackrel{\eqref{35707}}\approx \bu+x_i^2\bu_t'+x_j\bu_r'+x_ix_j\bu_r',
\]
and so $\bu \approx \bu+x_ix_j\bu_r'$ is derived.
We can show in a closely similar way that the identity $\bu \approx \bu+x_ix_j\bu_s'$ for some $\bu_s'\in X^+$ is derived if $\bw=x_j^2$.
Therefore, we can always derive the identity $\bu \approx \bu+x_ix_j\bp_{ij}$ for some $\bp_{ij}\in X^+$.

We shall show by induction on $k$ that the identities \eqref{35702}--\eqref{35715} can
derive
\[
\bu \approx \bu+x_1\cdots x_k(x_{k+1}+\cdots+x_{n-1})x_n
\]
for all $1\leq k<n-1$. Indeed, if $k=1$, then for any $1<j<n$, we have
\[
\bu \approx \bu+x_1x_n+x_jx_n+x_1x_j\bp_{1j}\stackrel{\eqref{35712}}\approx \bu+x_1x_n+x_jx_n+x_1x_j\bp_{1j}+x_1x_jx_n,
\]
and so $\bu \approx \bu+x_1x_jx_n$ is derived. This implies the identity
\[
\bu \approx \bu+x_1x_2x_n+\cdots+x_1x_{n-1}x_n\approx\bu+x_1(x_2+\cdots+x_{n-1})x_n.
\]
Let $2\leq k<n-1$. Suppose that the result is true for $k-1$, that is, the identities \eqref{35702}--\eqref{35715} can
derive
\[
\bu \approx \bu+x_1\cdots x_{k-1}(x_{k}+\cdots+x_{n-1})x_n.
\]
Then for any $k< j<n$, one can obtain the identity
\[
\bu \approx \bu+x_1\cdots x_{k-1}x_{k}x_n+x_1\cdots x_{k-1}x_{j}x_n.
\]
Furthermore, we have
\begin{align*}
\bu
&\approx \bu+x_1\cdots x_{k-1}x_{k}x_n+x_1\cdots x_{k-1}x_{j}x_n \\
&\approx \bu+x_1\cdots x_{k-1}x_{k}x_n+x_1\cdots x_{k-1}x_{j}x_n+x_{k}x_{j}\bp_{kj}\\
&\approx \bu+x_{k}x_1\cdots x_{k-1}x_n+x_{j}x_1\cdots x_{k-1}x_n+x_{k}x_{j}\bp_{kj}&&(\text{by}~\eqref{35702})\\
&\approx \bu+x_{k}x_1\cdots x_{k-1}x_n+x_{j}x_1\cdots x_{k-1}x_n+x_{k}x_{j}\bp_{kj}+x_{k}x_jx_1\cdots x_{k-1}x_n &&(\text{by}~\eqref{35712})\\
&\approx \bu+x_{k}x_1\cdots x_{k-1}x_n+x_{j}x_1\cdots x_{k-1}x_n+x_{k}x_{j}\bp_{kj}+x_1\cdots x_{k-1}x_{k}x_jx_n. &&(\text{by}~\eqref{35702})
\end{align*}
This implies the identity
\[
\bu\approx \bu+x_1\cdots x_{k}x_jx_n,
\]
and so
\[
\bu \approx \bu+x_1\cdots x_k(x_{k+1}+\cdots+x_{n-1})x_n
\]
is proved.
Take $k=n-2$. We obtain the identity
\[
\bu\approx \bu+x_1x_2\cdots x_n\approx \bu+\bq.
\]

\textbf{Case 2.} $c(p(\bu))\cap t(\bu)\neq\emptyset$.
Then $t(\bu_i) \in c(p(\bu_j))$ for some $\bu_i, \bu_j\in\bu$,
and so by \eqref{35702} we may write $\bu_j=t(\bu_i)\bu_j'$ for some $\bu_j'\in X^+$.
If $\ell(\bq)=1$, then there exists $\bu_k\in L_{\geq2}(\bu)$ such that
$\bq\in c(\bu_k)$ and so
$\bu_k=\bu_k'\bq$ or $\bu_k=\bq\bu_k'$ for some $\bu_k'\in X^+$.
If $\bu_k=\bu_k'\bq$, then
\[
\bu \approx \bu+\bu_k \approx \bu+\bu_k'\bq \stackrel{\eqref{35703}}\approx \bu+\bu_k'\bq+\bq\approx \bu+\bq.
\]
If $\bu_k=\bq\bu_k'$, then
\[
\bu \approx \bu+\bu_i+\bu_j+\bu_k  \stackrel{\eqref{35710}}\approx \bu+p(\bu_i)t(\bu_i)+t(\bu_i)\bu_j'+\bq\bu_k'+\bq\approx \bu+\bq.
\]
Now we consider the case that $\ell(\bq)\geq 2$.
Let us write $\bq=x_1x_2\cdots x_n$, where $n\geq2$.
Let $i$ and $j$ be distinct numbers such that $1\leq i, j \leq n$.
By Lemma \ref{lem5301} there exists $\bu_k\in \bu$ such that
$c(\bw)\subseteq \{x_i, x_j\}$ for some $\bw \in S_2(\bu_k)$,
and so $\bw$ is in $\{x_i^2, x_j^2, x_ix_j, x_jx_i\}$.

If $\bw=x_i^2$, then $\bu_k=\bp_1x_i^2\bp_2$ for some $\bp_1, \bp_2\in X^*$.
Since $c(\bq)\subseteq c(\bu)$, we have that $\bu_\ell=\bp_3x_j\bp_4$ for some $\bu_\ell \in \bu$ and $\bp_3, \bp_4\in X^*$. Then
\begin{align*}
\bu
& \approx \bu+\bu_i+\bu_j+\bu_k+\bu_\ell \\
& \approx \bu+p(\bu_i)t(\bu_i)+t(\bu_i)\bu_j'+\bp_1x_i^2\bp_2+\bp_3x_j\bp_4 \\
& \approx \bu+p(\bu_i)t(\bu_i)+t(\bu_i)\bu_j'+\bp_1x_i^2\bp_2+\bp_3x_j\bp_4 +x_ix_j, &&(\text{by}~(\ref{35715}))
\end{align*}
and so $\bu\approx \bu+x_ix_j$ can be derived.
If $\bw=x_ix_j$, then $\bu_k=\bp_1x_ix_j\bp_2$ for some $\bp_1, \bp_2\in X^*$, and so
\begin{align*}
\bu
& \approx \bu+\bu_i+\bu_j+\bu_k\\
& \approx \bu+p(\bu_i)t(\bu_i)+t(\bu_i)\bu_j'+\bp_1x_ix_j\bp_2\\
& \approx \bu+p(\bu_i)t(\bu_i)+t(\bu_i)\bu_j'+\bp_1x_ix_j\bp_2+x_ix_j, &&(\text{by}~(\ref{35714}))
\end{align*}
and so $\bu\approx \bu+x_ix_j$ is proved.
One can use a similar argument to show that it is true if $\bw=x_j^2$ or $x_jx_i$.
So we can always derive $\bu\approx \bu+x_ix_j$.
Now we repeat the process in Case 1 and show that
the identities \eqref{35702}--\eqref{35715} can
derive
\[
\bu \approx \bu+x_1\cdots x_k(x_{k+1}+\cdots+x_{n-1})x_n
\]
for all $1\leq k<n-1$.
So the identity $\bu \approx \bu+\bq$ is derived.
\end{proof}

\begin{cor}
The ai-semiring $S_{(4, 358)}$ is finitely based.
\end{cor}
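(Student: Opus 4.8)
The plan is to exploit a duality between $S_{(4, 358)}$ and $S_{(4, 357)}$, exactly as in the earlier corollaries of this type. First I would read off the two Cayley tables from Table \ref{tb1} and check directly that the multiplication table of $S_{(4, 358)}$ is the transpose of that of $S_{(4, 357)}$. Concretely, writing $M_{ij}$ for the product $i\cdot j$ in $S_{(4, 357)}$, one verifies that $(M^{\mathrm T})_{ij}=M_{ji}$ reproduces precisely the table of $S_{(4, 358)}$. Since the two semirings share the same additive reduct of Figure \ref{figure01}, this says that $S_{(4, 358)}\cong S_{(4, 357)}^{\mathrm{op}}$, where the superscript denotes reversal of the product $a\ast b=b\cdot a$.

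Next I would invoke the standard principle that finite basability of ai-semirings is preserved under passage to the opposite semiring. The word-reversal map $x_1x_2\cdots x_m\mapsto x_m\cdots x_2x_1$ is an anti-automorphism of the free semigroup $X^+$, and it extends to an isomorphism between the free ai-semiring and its opposite (the additive structure being untouched). Consequently, if $\{\bu_t\approx\bv_t\}$ is an equational basis for $\mathsf{V}(S)$, then the set of identities obtained by reversing every word on both sides is an equational basis for $\mathsf{V}(S^{\mathrm{op}})$.

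Applying this to Proposition \ref{pro35701}, which supplies the explicit finite basis \eqref{35702}--\eqref{35715} for $\mathsf{V}(S_{(4, 357)})$, I obtain a finite equational basis for $\mathsf{V}(S_{(4, 358)})$ simply by reversing each word occurring in \eqref{35702}--\eqref{35715}. This exhibits $S_{(4, 358)}$ as finitely based and completes the argument.

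The only point requiring genuine care is the opening verification that the two multiplication tables are true transposes and that the additive order is symmetric under the duality, so that word-reversal really is a semiring anti-isomorphism; this is pure bookkeeping, and beyond it there is no substantive obstacle, since the dualization principle delivers the finite basis automatically.
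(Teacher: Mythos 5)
Your proposal is correct and is exactly the paper's argument: the paper likewise observes that $S_{(4, 358)}$ and $S_{(4, 357)}$ have dual multiplications and concludes from Proposition \ref{pro35701} that $S_{(4, 358)}$ is finitely based. You merely spell out the word-reversal mechanism behind the duality principle in more detail than the paper does.
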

\begin{proof}
It is easy to see that $S_{(4, 358)}$ and $S_{(4, 357)}$ have dual multiplications.
By Proposition $\ref{pro35701}$ we immediately deduce that $S_{(4, 358)}$ is finitely based.
\end{proof}

The following result can be found in \cite[Lemma 3.1]{yrzs}.
\begin{lem}\label{lem5701}
Let $\bu\approx \bu+\bq$ be a nontrivial ai-semiring identity such that
$\bu=\bu_1+\bu_2+\cdots+\bu_n$ and $\bu_i, \bq \in X^+$, $1\leq i \leq n$.
If $\bu\approx \bu+\bq$ holds in $S_{57}$,
then $L_{\geq 2}(\bu)\neq \emptyset$,
$c(p(\bq))\subseteq c(p(\bu))$ and $t(\bq)\in c(\bu)$.
\end{lem}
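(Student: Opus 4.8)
The plan is to use the explicit structure of $S_{57}$. Realizing it as the three-element subsemiring $\{1,2,3\}$ of $S_{(4,361)}$, its additive reduct is the chain $3<1<2$ (so $+$ is the join, with $2$ the top and $3$ the bottom), and its multiplication satisfies $3\cdot y=y$ for every $y$ while $x\cdot y=2$ whenever $x\neq 3$. First I would record how a homomorphism $\varphi\colon P_f(X^+)\to S_{57}$ evaluates a word $\bw$: leading letters sent to $3$ act as left identities and get peeled off, so that $\varphi(\bw)=\varphi(t(\bw))$ when every variable of $p(\bw)$ is sent to $3$, whereas $\varphi(\bw)=2$ as soon as some variable of $p(\bw)$ is sent to a value $\neq 3$. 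Because the order is a chain, $\bu\approx\bu+\bq$ holds in $S_{57}$ exactly when $\varphi(\bq)\le\varphi(\bu)=\max_i\varphi(\bu_i)$ for all such $\varphi$, and I would establish each conclusion by contraposition, assuming it fails and exhibiting a $\varphi$ that breaks this inequality.

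For $t(\bq)\in c(\bu)$, assume $t(\bq)\notin c(\bu)$ and set $\varphi(t(\bq))=1$ and $\varphi(y)=3$ for all other $y$. Then every letter of every $\bu_i$ goes to $3$, giving $\varphi(\bu)=3$, while $\varphi(\bq)\in\{1,2\}$ since the final letter of $\bq$ goes to $1$; as $3$ is the bottom, $\varphi(\bq)\not\le\varphi(\bu)$. For $c(p(\bq))\subseteq c(p(\bu))$, assume some $x\in c(p(\bq))$ lies outside $c(p(\bu))$ and set $\varphi(x)=1$ and $\varphi(y)=3$ for $y\neq x$. The delicate point is to use the middle value $1$ rather than the top $2$: since $x$ occurs non-finally in $\bq$ we still get $\varphi(\bq)=2$, but in each $\bu_i$ the variable $x$ can occur only finally, so $\varphi(\bu_i)=\varphi(t(\bu_i))\in\{1,3\}$ and thus $\varphi(\bu)\le 1<2=\varphi(\bq)$. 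Had $x$ been sent to $2$, a summand of $\bu$ ending in $x$ would force $\varphi(\bu)=2$ and the argument would collapse.

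Finally, $L_{\ge 2}(\bu)\neq\emptyset$ would follow from the previous two conclusions together with nontriviality: if every $\bu_i$ had length $1$, then $c(p(\bu))=\emptyset$, so $c(p(\bq))=\emptyset$ and $\bq$ is a single variable, whence $\bq=t(\bq)\in c(\bu)$ already occurs as a summand of $\bu$ and $\bu\approx\bu+\bq$ is trivial, a contradiction. I expect the main obstacle to be precisely the separating substitution for $c(p(\bq))\subseteq c(p(\bu))$: one must send the offending variable to $1$ so that its final occurrences inside the summands of $\bu$ remain strictly below the value $\varphi(\bq)=2$, and verifying the word-evaluation rule that underlies this choice is where the noncommutativity of $S_{57}$ must be handled carefully.
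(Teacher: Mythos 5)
Your proof is correct. Note that the paper does not actually prove this lemma: it imports it verbatim from the companion paper (\cite[Lemma 3.1]{yrzs}), so there is no in-paper argument to compare against. Your identification of $S_{57}$ as $\{1,2,3\}$ with additive chain $3<1<2$ and with $3$ a left identity and $x\cdot y=2$ for $x\neq 3$ matches the copies of $S_{57}$ the paper exhibits inside $S_{(4,360)}$, $S_{(4,361)}$ and $S_{(4,362)}$, your word-evaluation rule is the right one, and each of the three separating substitutions (in particular sending the offending variable of $c(p(\bq))\setminus c(p(\bu))$ to the \emph{middle} element $1$) does what you claim; the deduction of $L_{\ge 2}(\bu)\neq\emptyset$ from the other two conclusions plus nontriviality is also sound. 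This contrapositive, explicit-homomorphism style is exactly how the paper handles the analogous statement it does prove, namely Lemma \ref{lem4701} for $S_{47}$.
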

\begin{pro}\label{pro36201}
$\mathsf{V}(S_{(4, 362)})$ is the ai-semiring variety defined by the identities
\begin{align}
x^2y& \approx xy; \label{36201}\\
x^2y^2&\approx x^2+y^2;\label{36203}\\
x+yz& \approx yx+yz. \label{36204}
\end{align}
\end{pro}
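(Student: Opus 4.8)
I would begin by checking directly from the Cayley table that $S_{(4, 362)}$ satisfies \eqref{36201}--\eqref{36204}; the computation is short once one notes that $3$ is a left identity, that $a\cdot b = 2$ whenever $a\neq 3$, and that $a^2\in\{2,3\}$ for every $a$. Since the right-hand side of \eqref{36203} is unchanged under swapping $x$ and $y$ and addition is commutative, \eqref{36203} gives $x^2y^2\approx y^2x^2$; feeding this into \eqref{36201} by the same chain used in the proof of Proposition \ref{pro28101} (observe that \eqref{36201} and \eqref{36203} are literally \eqref{28101} and \eqref{28103}) yields the auxiliary identity $xyz\approx yxz$. Together with \eqref{36201} this permits me to permute all but the last letter of any word and to collapse repeated leading letters, so that every word of length $\geq 2$ can be rewritten in a normal form whose prefix consists of pairwise distinct variables.

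\textbf{Reduction via a subdirect decomposition.} It then remains to derive every nontrivial identity $\bu\approx\bu+\bq$ of $S_{(4, 362)}$. The plan is to verify that the partitions $\{1,2\},\{3\},\{4\}$ and $\{1,4\},\{2\},\{3\}$ are congruences of $S_{(4, 362)}$ with trivial intersection, so that $S_{(4, 362)}$ is a subdirect product of the two quotients, these being $S_4$ and $S_{57}$. Hence $\bu\approx\bu+\bq$ holds in both $S_4$ and $S_{57}$, and Lemma \ref{lem401} and Lemma \ref{lem5701} furnish the data $c(\bq)\subseteq c(\bu)$, $L_{\geq 2}(\bu)\neq\emptyset$, $c(p(\bq))\subseteq c(p(\bu))$, $t(\bq)\in c(\bu)$, and the implication $c(p(\bu))\cap t(\bu)=\emptyset\Rightarrow c(p(\bu+\bq))\cap t(\bu+\bq)=\emptyset$.

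\textbf{Deriving $\bu\approx\bu+\bq$.} Using \eqref{36201} and $xyz\approx yxz$ I would put $\bq$ into the normal form $x_1^2\cdots x_k^2\,t(\bq)$ (when $m(t(\bq),\bq)=1$) or $x_1^2\cdots x_k^2$ (when $m(t(\bq),\bq)\geq 2$), where $c(p(\bq))=\{x_1,\dots,x_k\}$. Applying \eqref{36203} repeatedly then splits the squared prefix, reducing the goal to deriving $\bu\approx\bu+x_i^2\,t(\bq)$, respectively $\bu\approx\bu+x_i^2$, for each $i$; these are recombined by additive idempotency. I would then split into the two cases of Lemma \ref{lem401}, as in Proposition \ref{pro28101}. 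When $c(p(\bu))\cap t(\bu)\neq\emptyset$ there exist $\bu_{i_1},\bu_{i_2}\in\bu$ with $t(\bu_{i_1})\in c(p(\bu_{i_2}))$, and prepending $\bu_{i_1}$ via \eqref{36204} followed by \eqref{36201} produces a squared summand to be enlarged to the required $x_i^2$ or $x_i^2 t(\bq)$; when $c(p(\bu))\cap t(\bu)=\emptyset$ the implication forces $m(t(\bq),\bq)=1$, so the terminal variable occurs once and is supplied separately, while a single-variable $\bq=t(\bq)$ is absorbed directly by \eqref{36204} in the form $t(\bq)+p(\bu_j)t(\bq)\approx p(\bu_j)t(\bq)$ for a suitable $\bu_j\in L_{\geq 2}(\bu)$.

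\textbf{Main obstacle.} The delicate point is that the analogue of identity \eqref{28104}, used in the proof of Proposition \ref{pro28101} to grow a seed square into a square over all the relevant variables, is \emph{not} available here: one checks that $x+y^2\approx x+xy^2$ fails in $S_{(4, 362)}$. Consequently the squared summands must be assembled variable by variable using only \eqref{36204} and \eqref{36201}, and the correct placement of each variable of $\bq$ inside $\bu$ has to be read off from the sharper conditions $c(p(\bq))\subseteq c(p(\bu))$ and $t(\bq)\in c(\bu)$ supplied by Lemma \ref{lem5701}. Carrying out this assembly, and reconciling it with the $S_4$ constraint on the terminal variable in the case $c(p(\bu))\cap t(\bu)=\emptyset$, is the technical heart of the argument.
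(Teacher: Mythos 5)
Your proposal is correct and follows essentially the same route as the paper: the same subdirect decomposition of $S_{(4,362)}$ into $S_4$ and $S_{57}$ via the congruences you name, the same appeal to Lemmas \ref{lem401} and \ref{lem5701}, the derivation of $xyz\approx yxz$ from \eqref{36201} and \eqref{36203} exactly as in Proposition \ref{pro28101}, and the same mechanism of prepending prefixes via \eqref{36204} and splitting or merging squared products via \eqref{36203}. The only difference is organizational: the paper first establishes the single master identity $\bu\approx\bu+x_1^2x_2^2\cdots x_m^2(y_1+\cdots+y_n)$ over all of $c(p(\bu))$ and $t(\bu)$ and then carves $\bq$ out of it with \eqref{36203}, whereas you assemble the required summand variable by variable and recombine, which amounts to the same computation.
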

\begin{proof}
It is easy to verify that $S_{(4, 362)}$ satisfies the identities \eqref{36201}--\eqref{36204}.
In the proof of Proposition \ref{pro28101},
the identities \eqref{36201} and \eqref{36203} are used to derive the identity
\begin{align}
xyz \approx yxz. \label{36202}
\end{align}
To complete the proof it remains to show that every ai-semiring identity of $S_{(4, 362)}$
can be derived by \eqref{36201}--\eqref{36202} and the identities defining $\mathbf{AI}$.
Let $\bu \approx \bu+\bq$ be such a nontrivial identity,
where $\bu=\bu_1+\bu_2+\cdots+\bu_n$ and $\bu_i, \bq \in X^+$, $1 \leq i \leq n$.
It is easy to check that $S_{(4, 362)}$ is isomorphic to
a subdirect product of $S_{4}$ and $S_{57}$.
So both $S_{4}$ and $S_{57}$ satisfy $\bu \approx \bu+\bq$.
By Lemmas \ref{lem401} and \ref{lem5701}
we have that $c(p(\bq))\subseteq c(p(\bu))$, $t(\bq)\in c(\bu)$, and $L_{\geq 2}(\bu)$ is nonempty.
Suppose that
$
c(p(\bu))=\{x_1, x_2, \ldots, x_m\}
$
and that
$
t(\bu)=\{y_1, y_2, \ldots, y_n\}.
$
Then
\[
c(p(\bq)) \subseteq \{x_1, x_2, \ldots, x_m\}, t(\bq)\in \{x_1, x_2, \ldots, x_m\}\cup \{y_1, y_2, \ldots, y_n\}.
\]
By the identities \eqref{36201}, \eqref{36202} and \eqref{36204} we derive
\begin{equation}\label{id25032620}
\bu \approx \bu+x_1^2x_2^2\cdots x_m^2(y_1+y_2+\cdots+y_n).
\end{equation}

\textbf{Case 1.} $c(p(\bu))\cap t(\bu)=\emptyset$. Then by Lemma \ref{lem401} $c(p(\bu+\bq))\cap t(\bu+\bq)=\emptyset$
and so
\[
\{x_1, x_2, \ldots, x_m\}\cap \{y_1, y_2, \ldots, y_n\}=\emptyset.
\]
This implies that $m(t(\bq), \bq)=1$, $t(\bq)\notin c(p(\bq))$ and $t(\bq)\in t(\bu)$.
If $\ell(\bq)=1$, then there exists $\bu_t\in L_{\geq2}(\bu)$ such that $t(\bq)=t(\bu_t)$ and so $\bu_t=p(\bu_t)t(\bq)$.
Furthermore, we have
\[
\bu \approx \bu+\bu_t \approx \bu+p(\bu_t)t(\bq) \stackrel{(\ref{36204})}\approx \bu+p(\bu_t)t(\bq)+t(\bq)\approx  \bu+p(\bu_t)t(\bq)+\bq.
\]
If $\ell(\bq)\geq2$, we may assume that $c(p(\bq))=\{x_1, x_2,\ldots, x_s\}$ and $t(\bq)=y_1$.
By the identities (\ref{36201}) and (\ref{36202}) we deduce
\begin{equation}\label{id25032621}
\bq \approx x_1^2x_2^2\cdots x_s^2y_1.
\end{equation}
Now we have
\begin{align*}
\bu
&\approx \bu+x_1^2x_2^2 \cdots x_m^2y_1&&(\text{by}~\eqref{id25032620})\\
&\approx \bu+(x_1^2x_2^2\cdots x_s^2+x_{s+1}^2\cdots x_m^2)y_1&&(\text{by}~\eqref{36203})\\
&\approx \bu+x_1^2x_2^2\cdots x_s^2y_1+x_{s+1}^2\cdots x_m^2y_1\\
&\approx \bu+\bq+x_{s+1}^2\cdots x_m^2y_1.  &&(\text{by}~\eqref{id25032621})
\end{align*}
This implies the identity $\bu \approx \bu+\bq$.

\textbf{Case 2.} $c(p(\bu))\cap t(\bu)\neq\emptyset$.
Then $t(\bu_{i_1})\in c(p(\bu_{i_2}))$ for some $\bu_{i_1}, \bu_{i_2} \in \bu$
and so
\[
\{x_1, x_2, \ldots, x_m\}\cap \{y_1, y_2, \ldots, y_n\}\neq\emptyset.
\]
Furthermore, we have
\begin{align*}
\bu
&\approx \bu+x_1^2x_2^2\cdots x_m^2(y_1+\cdots+y_n)&&(\text{by}~\eqref{id25032620})\\
&\approx \bu+x_1^2x_2^2\cdots x_m^2(y_s+\cdots+y_t)+x_1^2x_2^2\cdots x_m^2. &&(\text{by}~\eqref{36201}, \eqref{36202}, \eqref{36203})
\end{align*}
This derives the identity
\begin{equation}\label{id25032630}
\bu \approx \bu+x_1^2x_2^2\cdots x_m^2.
\end{equation}
If $\ell(\bq)=1$ and $t(\bq)\in c(p(\bu))$, then $t(\bq)=\bq$. Take $\bq=x_1$. We have
\begin{align*}
\bu
&\approx \bu+x_1^2x_2^2\cdots x_m^2&&(\text{by}~\eqref{id25032630})\\
&\approx \bu+x_2^2\cdots x_m^2x_1^2&&(\text{by}~\eqref{36203})\\
&\approx \bu+x_2^2\cdots x_m^2x_1^2+x_1&&(\text{by}~\eqref{36204})\\
&\approx \bu+x_2^2\cdots x_m^2x_1^2+\bq.
\end{align*}
If $\ell(\bq)=1$ and $t(\bq)\in t(\bu)$, then $t(\bq)=\bq=t(\bu_k)$ for some $\bu_k\in L_{\geq2}(\bu)$. Now we have
\[\bu
\approx \bu+\bu_k \approx \bu+p(\bu_k)t(\bq) \stackrel{\eqref{36204}} \approx \bu+p(\bu_k)t(\bq)+t(\bq)\approx \bu+p(\bu_k)t(\bq)+\bq.
\]
If $\ell(\bq)\geq2$ and $t(\bq)\in c(p(\bu))$, then
\begin{align*}
\bu
&\approx \bu+x_1^2x_2^2\cdots x_m^2&&(\text{by}~\eqref{id25032630})\\
&\approx \bu+x_1^2x_2^2\cdots x_m^2+\bq. &&(\text{by}~\eqref{36203}, \eqref{36204}, \eqref{36202})
\end{align*}
If $\ell(\bq)\geq2$ and $t(\bq)\in t(\bu)$,
we may assume that $c(p(\bq))=\{x_1, x_2,\ldots, x_s\}$ and $t(\bq)=y_{_k}$.
By the identities \eqref{36201} and \eqref{36202} we deduce
\begin{equation}\label{id25032650}
\bq \approx x_1^2x_2^2\cdots x_s^2y_{_k}.
\end{equation}
Now we have
\begin{align*}
\bu
&\approx \bu+x_1^2x_2^2\cdots x_m^2y_{_k} &&(\text{by}~\eqref{id25032620})\\
&\approx \bu+(x_1^2x_2^2\cdots x_s^2+x_{s+1}^2\cdots x_m^2)y_{_k}&&(\text{by}~\eqref{36204})\\
&\approx \bu+ x_1^2x_2^2\cdots x_s^2y_{_k}+x_{s+1}^2\cdots x_m^2y_{_k}\\
&\approx \bu+\bq+x_{s+1}^2\cdots x_m^2y_{_k}.   &&(\text{by}~\eqref{id25032650})
\end{align*}
This derives the identity $\bu \approx \bu+\bq$.
\end{proof}
\begin{cor}
The ai-semiring $S_{(4, 365)}$ is finitely based.
\end{cor}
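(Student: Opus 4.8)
The plan is to recognize $S_{(4, 365)}$ as the dual of a semiring already shown to be finitely based, and then to invoke the standard fact that the finite basis property is preserved under passing to the opposite multiplication.

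First I would compare the multiplication table of $S_{(4, 365)}$ in Table \ref{tb1} with those appearing in the preceding propositions. Transposing the multiplication table of $S_{(4, 362)}$, i.e.\ replacing the operation $x\cdot y$ by $y\cdot x$, one obtains exactly the multiplication of $S_{(4, 365)}$; moreover the two semirings share the same additive reduct, since both are governed by Figure \ref{figure01}. Thus $S_{(4, 365)}$ and $S_{(4, 362)}$ have dual multiplications, and this table transposition is the only computation that genuinely needs checking.

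Next I would use the elementary observation that the word-reversal map $x_{i_1}x_{i_2}\cdots x_{i_k}\mapsto x_{i_k}\cdots x_{i_2}x_{i_1}$ extends to a bijection on ai-semiring terms which sends every identity holding in $S_{(4, 362)}$ to one holding in $S_{(4, 365)}$, and conversely, while respecting derivability over $\mathbf{AI}$. Consequently a finite equational basis for $\mathsf{V}(S_{(4, 362)})$ is carried to a finite equational basis for $\mathsf{V}(S_{(4, 365)})$.

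Since Proposition \ref{pro36201} supplies such a finite basis for $\mathsf{V}(S_{(4, 362)})$ --- explicitly, reversing the words in \eqref{36201}--\eqref{36204} yields a finite basis for $\mathsf{V}(S_{(4, 365)})$ --- the conclusion follows at once. I do not anticipate any substantial obstacle here: the entire argument reduces to the routine verification of duality from Table \ref{tb1}, exactly as in the corollaries following Propositions \ref{pro28101} and \ref{pro35701}.
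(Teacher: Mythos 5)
Your proposal is correct and coincides with the paper's own argument: the paper likewise observes that $S_{(4, 365)}$ and $S_{(4, 362)}$ have dual multiplications and deduces the result immediately from Proposition \ref{pro36201}. Your added explanation of why duality preserves the finite basis property (word reversal carrying a finite basis to a finite basis) is the standard justification the paper leaves implicit.
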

\begin{proof}
It is easy to see that $S_{(4, 365)}$ and $S_{(4, 362)}$ have dual multiplications.
By Proposition $\ref{pro36201}$ we immediately deduce that $S_{(4, 365)}$ is finitely based.
\end{proof}

\section{Equational bases of $S_{(4, 285)}$, $S_{(4, 379)}$, $S_{(4, 380)}$ and $S_{(4, 385)}$}
In this section we focus on the finite basis problem for
some 4-element ai-semirings that relate to one of $S_{10}$, $S_{44}$, $S_{46}$ and $S_{47}$.
The following result, which is a consequence of \cite[Corollary 2.13]{rz16} and its proof,
provides a solution of the equational problem for $S_{10}$.
\begin{lem}\label{lem1001}
Let $\bu\approx \bu+\bq$ be a nontrivial ai-semiring identity such that
$\bu=\bu_1+\bu_2+\cdots+\bu_n$ and $\bu_i, \bq \in X^+$, $1\leq i \leq n$.
Then $\bu\approx \bu+\bq$ is satisfied by $S_{10}$ if and only if $c(\bq)\subseteq  c(\bu)$ and
$r(\bq)=r(\bu_{i_1}\bu_{i_2} \cdots\bu_{i_{3^\ell}})$ for some $\bu_{i_1}, \bu_{i_2}, \ldots, \bu_{i_{3^\ell}}\in \bu$,
where $r(\bq)=\{x\mid x\in c(\bq), m(x,\bq)~\textrm{is an odd number}\}$.
\end{lem}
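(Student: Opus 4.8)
The plan is to deduce the statement from \cite[Corollary 2.13]{rz16}, whose proof analyses exactly the structure of $S_{10}$, and to organise the argument around the semilattice order of the additive reduct. Since $(S_{10},+)$ is a commutative idempotent semigroup, its natural order $a\le b :\Leftrightarrow a+b=b$ is a semilattice (join) order, and for any homomorphism $\varphi\colon P_f(X^+)\to S_{10}$ we have $\varphi(\bu)=\sum_{i=1}^{n}\varphi(\bu_i)$. Hence $\bu\approx\bu+\bq$ holds in $S_{10}$ if and only if $\varphi(\bq)\le\varphi(\bu)$ for every such $\varphi$, i.e.\ $\varphi(\bq)$ lies below the join of the $\varphi(\bu_i)$. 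The first step I would record is the behaviour of $S_{10}$ on a single word: reading off the Cayley tables of $S_{10}$ from \cite{zrc}, for every $\varphi$ and every $\bw\in X^+$ the value $\varphi(\bw)$ is controlled by the content $c(\bw)$ together with the odd support $r(\bw)=\{x\mid m(x,\bw)\text{ odd}\}$, and concatenation acts on odd supports by symmetric difference,
\[
r(\bw_1\bw_2)=r(\bw_1)\,\triangle\,r(\bw_2).
\]
In particular repeating a factor twice leaves $r$ unchanged, a fact I will reuse at the end.

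\emph{Necessity.} Assuming $\varphi(\bq)\le\varphi(\bu)$ for all $\varphi$, the inclusion $c(\bq)\subseteq c(\bu)$ follows by a routine content-separation argument: if some $x\in c(\bq)\setminus c(\bu)$, evaluate into a two-element content-detecting quotient of $S_{10}$ so that $\varphi(\bq)$ records the occurrence of $x$ while none of the $\varphi(\bu_i)$ does, contradicting $\varphi(\bq)\le\varphi(\bu)$ (this is the same mechanism as in the necessity parts of Lemmas~\ref{lem401} and~\ref{lem5701}). For the odd-support condition I would feed in the homomorphisms that read parity: using the element of $S_{10}$ whose square returns to the identity part, each $\varphi$ of this kind turns $r$ into an $\mathbb{F}_2$-linear datum, and $\varphi(\bq)\le\bigvee_i\varphi(\bu_i)$ forces $\varphi(\bq)$ to be matched by a join-irreducible contribution. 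By the product rule for $r$ this contribution is realised by some concatenation $\bu_{i_1}\cdots\bu_{i_{3^\ell}}$ of summands with $r(\bq)=r(\bu_{i_1}\cdots\bu_{i_{3^\ell}})$; that this matching is both necessary and achievable with a product of length $3^\ell$ is precisely the content extracted from \cite[Corollary 2.13]{rz16}.

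\emph{Sufficiency.} Conversely, suppose $c(\bq)\subseteq c(\bu)$ and $r(\bq)=r(\bu_{i_1}\cdots\bu_{i_{3^\ell}})$, and fix an arbitrary $\varphi$. I would split into cases according to whether $\varphi$ sends some variable to the absorbing element of $S_{10}$. In the degenerate case this collapses $\varphi(\bq)$ to the bottom, and $c(\bq)\subseteq c(\bu)$ guarantees the same collapse is available below $\varphi(\bu)$; in the main case the first-step evaluation fact shows $\varphi(\bq)$ depends only on $r(\bq)$, while $\varphi(\bu_{i_1}\cdots\bu_{i_{3^\ell}})$ depends on $r(\bu_{i_1}\cdots\bu_{i_{3^\ell}})=r(\bq)$, so these two elements coincide and the latter sits below $\bigvee_i\varphi(\bu_i)=\varphi(\bu)$. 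Hence $\varphi(\bq)\le\varphi(\bu)$ in every case.

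\emph{Main obstacle.} The delicate point is the odd-support direction, together with the exponent $3^\ell$. Morally $3^\ell$ is just ``a sufficiently large odd number of factors'': since inserting two equal factors does not change $r$, any symmetric difference of an odd number of the $r(\bu_i)$ can be inflated to one indexed by a product of length $3^\ell$ (every $3^\ell$ is odd, and one pads upwards in steps of two), so the stated condition is equivalent to ``$r(\bq)$ is a symmetric difference of an odd number of the $r(\bu_i)$''. The genuine work, which I would import wholesale from \cite[Corollary 2.13]{rz16} rather than reprove, is showing that this $\mathbb{F}_2$-combinatorial condition corresponds exactly to the semilattice comparison $\varphi(\bq)\le\varphi(\bu)$ holding uniformly over all homomorphisms into $S_{10}$; verifying this equivalence cleanly, and fixing the interaction between the absorbing-element cases and the parity cases, is where the proof requires care.
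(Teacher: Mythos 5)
The paper does not actually prove this lemma: it is stated as ``a consequence of \cite[Corollary 2.13]{rz16} and its proof'' and used as a black box, so there is no in-paper argument to compare against. Your reconstruction from the structure of $S_{10}$ (the power semiring of the two-element group, so that a word evaluates either to the absorbing element or to a singleton determined $\mathbb{F}_2$-linearly by its odd support, with $r(\bw_1\bw_2)=r(\bw_1)\,\triangle\,r(\bw_2)$) is the right picture, and your observation that $3^{\ell}$ is just ``some odd number of factors'', obtainable by padding with repeated pairs, is correct and is the honest way to read the exponent. Since you, like the paper, import the hard direction (that failure of the odd-symmetric-difference condition yields a separating parity homomorphism, i.e.\ the $\mathbb{F}_2$-duality step) wholesale from \cite[Corollary 2.13]{rz16}, your route is essentially the same as the paper's.

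Two points in your sketch need repair. First, the absorbing element $1$ of $S_{10}$ is the additive \emph{top} of its order (it is the join of the two minimal elements), not the bottom; if it really collapsed $\varphi(\bq)$ to the bottom the degenerate case would need no hypothesis at all, whereas in fact it is exactly this case that forces $c(\bq)\subseteq c(\bu)$ (the topped variable must occur in some $\bu_i$ so that $\varphi(\bu)$ is also the top). Second, in the sufficiency argument the step ``$\varphi(\bu_{i_1}\cdots\bu_{i_{3^\ell}})$ sits below $\bigvee_i\varphi(\bu_i)$'' is not a general semiring inequality and is false for an even number of factors (in $S_{10}$ the square of a singleton need not lie below that singleton). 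It holds here only after a case split: if the $\varphi(\bu_{i_j})$ are not all equal, two distinct singletons already join to the top; if they are all equal to a singleton $\{a\}$, an \emph{odd} power returns $\{a\}$ itself; and if some factor is topped, $\varphi(\bu)$ is the top. So the oddness of $3^{\ell}$ is doing real work in sufficiency, not only in the symmetric-difference bookkeeping, and the mixed assignments (a variable of some $\bu_{i_j}$ outside $c(\bq)$ sent to the top) should be folded into this same case split. With those two points made explicit your argument closes correctly.
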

\begin{pro}\label{pro28501}
$\mathsf{V}(S_{(4, 285)})$ is the ai-semiring variety defined by the identities
\begin{align}
x^3y& \approx xy; \label{28501}\\
xy& \approx yx; \label{28502}\\
xy^2& \approx xy^2+x^3;    \label{28503}\\
x+y+z& \approx x+y+z+xyz.    \label{28504}
\end{align}
\end{pro}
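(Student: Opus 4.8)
The plan is to follow the template of the preceding propositions: first verify that $S_{(4,285)}$ satisfies \eqref{28501}--\eqref{28504} (a direct computation on the table, the only point worth isolating being that $\{3,4\}$ is a copy of the group $\mathbb{Z}_2$ with identity $3$, while $1$ is a multiplicative zero and $2\cdot x=1$ for every $x$), and then to show that every identity of $S_{(4,285)}$ is derivable from \eqref{28501}--\eqref{28504} together with the axioms of $\mathbf{AI}$. To pin down which identities must be derived I would record a subdirect decomposition: collapsing $\{1,2\}$ to one class is a congruence whose quotient is $S_{10}$, collapsing $\{1,3,4\}$ is a congruence whose quotient is $N_2$, and these two congruences meet in the diagonal. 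Hence $S_{(4,285)}$ is a subdirect product of $S_{10}$ and $N_2$, so $\mathsf{V}(S_{(4,285)})=\mathsf{V}(S_{10},N_2)$ and any identity of $S_{(4,285)}$ holds in both $S_{10}$ and $N_2$.

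As usual it suffices to treat a nontrivial identity $\bu\approx\bu+\bq$ with $\bu=\bu_1+\cdots+\bu_n$ and $\bu_i,\bq\in X^+$. Using \eqref{28502} I would pass to $X_c^+$, and using \eqref{28501} (together with the derived reduction $x^{m}\approx x^{m-2}$ for $m\ge 4$) I would normalise every word so that each variable carries an exponent in $\{1,2,3\}$. The factor $N_2$ forces $\ell(\bq)\ge 2$ (otherwise $\bq\in c(L_1(\bu))$ and the identity is trivial), while Lemma \ref{lem1001} applied to $S_{10}$ gives $c(\bq)\subseteq c(\bu)$ together with the crucial condition $r(\bq)=r(\bu_{i_1}\cdots\bu_{i_{3^\ell}})$ for a suitable choice of summands.

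The derivation then has three moves. The odd support is produced by \eqref{28504}: iterating $x+y+z\approx x+y+z+xyz$ over the chosen summands (using idempotency to duplicate them) builds, in $\ell$ rounds, a summand $\bw$ with $\bu\approx\bu+\bw$ and $r(\bw)=r(\bq)$; the same identity, used in the form $\bw+\bu_k+\bu_k\approx\bw+\bu_k+\bu_k+\bw\bu_k^2$, lets me multiply $\bw$ by the square of any summand $\bu_k$, which pads in any desired variable of $c(\bq)$ with even multiplicity without disturbing the odd support. The spurious variables (those in $c(\bw)\setminus c(\bq)$, which necessarily occur with even multiplicity since they lie outside $r(\bw)=r(\bq)$) are then eliminated by \eqref{28503}: writing such a word as $\ba z^2$ and applying $xy^2\approx xy^2+x^3$ produces the $z$-free summand $\ba^3$, so repeated use deletes every unwanted variable at the cost of cubing the rest, which preserves all parities. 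Finally \eqref{28501} renormalises exponents, reducing any cube to a first power whenever a cofactor is present (which it is, as $\ell(\bq)\ge 2$), so that the constructed summand becomes exactly $\bq$, yielding $\bu\approx\bu+\bq$.

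I expect the main obstacle to be the bookkeeping in this last stage: showing that the three moves can be orchestrated so that the word finally obtained is literally $\bq$, with matching support and matching exponents, while every intermediate expression remains a legitimate consequence of the basis. Concretely, the delicate point is the interplay between \eqref{28504}, which can only introduce products of three and unavoidably drags in even-multiplicity pollution, and \eqref{28503}, which removes an even variable only by cubing everything else; one must check that these two mechanisms together realise precisely the words $\bq$ permitted by the $S_{10}$-condition of Lemma \ref{lem1001}, and dispose of the degenerate cases (such as $r(\bq)=\emptyset$, or $c(\bq)$ a single variable) by hand.
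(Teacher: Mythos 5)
Your proposal is correct and follows essentially the same route as the paper's proof: the same subdirect decomposition of $S_{(4,285)}$ into $N_2$ and $S_{10}$, the same appeal to Lemma \ref{lem1001}, and the same division of labour among the identities, with \eqref{28504} building a product of summands whose odd support matches $r(\bq)$, \eqref{28503} stripping the even-multiplicity surplus at the cost of cubing, and \eqref{28501}, \eqref{28502} renormalising exponents. The bookkeeping you flag as the main obstacle is handled in the paper by a single closed-form product $\bu_1^{3^\ell+1}\bu_{i_1}^3\cdots\bu_{i_{3^\ell}}^3\bu_1^2\cdots\bu_n^2$, which collapses your iterative three-move orchestration into one application of \eqref{28503} followed by the reduction of $\bq^3$ to $\bq$.
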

\begin{proof}
It is easy to check that $S_{(4, 285)}$ satisfies the identities \eqref{28501}--\eqref{28504}.
In the remainder we shall prove that every ai-semiring identity of $S_{(4, 285)}$
can be derived by \eqref{28501}--\eqref{28504} and the identities defining $\mathbf{AI}$.
Let $\bu \approx \bu+\bq$ be such a nontrivial identity,
where $\bu=\bu_1+\bu_2+\cdots+\bu_n$ and $\bu_i, \bq \in X^+$, $1 \leq i \leq n$.
It is easy to verify that $S_{(4, 285)}$ is isomorphic to a subdirect product of $N_2$ and $S_{10}$.
So both $N_2$ and $S_{10}$ satisfy $\bu \approx \bu+\bq$.
This implies that $\ell(\bq)\geq 2$.
By Lemma \ref{lem1001} we have that $c(\bq)\subseteq c(\bu)$ and $r(\bq)=r(\bu_{i_1}\bu_{i_2} \cdots\bu_{i_{3^\ell}})$
for some $\bu_{i_1}, \bu_{i_2}, \cdots, \bu_{i_{3^\ell}}\in \bu$.
Assume that $c(\bq)=\{x_1, x_2, \ldots, x_m, y_1, y_2, \ldots, y_n\}$,
where $m(x_i, \bq)$ is an odd number and $m(y_j, \bq)$ is an even number
for all $1\leq i \leq m$ and $1\leq j \leq n$.
By the identities \eqref{28501} and \eqref{28502} we deduce
\begin{equation}\label{id25032660}
\bq \approx x_1x_2 \cdots x_my_1^2y_2^2 \cdots y_n^2.
\end{equation}
Now we have
\begin{align*}
\bu
&\approx \bu+\bu_1^{3^\ell+1}\bu_{i_1}^3\bu_{i_2}^3\cdots \bu_{i_{3^\ell}}^3\bu_1^2\bu_2^2\cdots \bu_n^2&&(\text{by}~\eqref{28502}, \eqref{28504})\\
&\approx \bu+x_1x_2\cdots x_my_1^2y_2^2\cdots y_n^2\bp_1^2&&(\text{by}~\eqref{28502})\\
&\approx \bu+x_1x_2\cdots x_my_1^2y_2^2\cdots y_n^2\bp_1^2+(x_1x_2\cdots x_my_1^2y_2^2\cdots y_n^2)^3&&(\text{by}~\eqref{28503})\\
&\approx \bu+x_1x_2\cdots x_my_1^2y_2^2\cdots y_n^2\bp_1^2+\bq^3 &&(\text{by}~\eqref{id25032660})\\
&\approx \bu+x_1x_2\cdots x_my_1^2y_2^2\cdots y_n^2\bp_1^2+\bq.  &&(\text{by}~\eqref{28501}, \eqref{28502})
\end{align*}
This implies the identity $\bu \approx \bu+\bq$.
The proof is analogous if $m(x, \bq)$ is an odd number for all $x\in c(\bq)$
or $m(x, \bq)$ is an even number for all $x\in c(\bq)$.
\end{proof}

\begin{lem}\label{lem4401}
Let $\bu\approx \bu+\bq$ be a nontrivial ai-semiring identity such that
$\bu=\bu_1+\bu_2+\cdots+\bu_n$ and $\bu_i, \bq \in X_c^+$, $1\leq i \leq n$.
Suppose that $\bu\approx \bu+\bq$ is satisfied by $S_{44}$.
Then $\ell(\bq)\geq 2$ and $D_\bq(\bu)\neq\emptyset$.
If $M_{1}(\bq)\neq\emptyset$,
then for any $x \in M_{1}(\bq)$, there exists $\bu_i \in D_\bq(\bu)$ such that $m(x, \bu_i)\leq 1$.
\end{lem}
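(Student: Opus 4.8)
The plan is to obtain each of the three conclusions by evaluating the identity $\bu\approx\bu+\bq$ under suitably chosen semiring homomorphisms $\varphi\colon P_f(X^+)\to S_{44}$, arguing by contraposition: if a required condition failed, I would produce a $\varphi$ with $\varphi(\bu)\neq\varphi(\bu)+\varphi(\bq)$, contradicting the hypothesis that $S_{44}$ satisfies the identity. Since all terms lie in $X_c^+$, I may treat each word as a multiset, so that only the multiplicities $m(x,\bu_i)$ and $m(x,\bq)$ matter.

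First, for $\ell(\bq)\ge 2$, I would use a two-element subsemiring of $S_{44}$ realizing the equational theory of $N_2$, whose behaviour is recorded in \cite[Lemma 1.1]{sr}: an identity of the shape $\bu\approx\bu+\bq$ holds there only when $\ell(\bq)\ge 2$. Restricting a witnessing assignment to this subsemiring (equivalently, composing with the embedding) forces $\ell(\bq)\ge 2$ at once. Next, for $D_\bq(\bu)\neq\emptyset$, suppose to the contrary that every summand $\bu_i$ contains a variable lying outside $c(\bq)$. Reading off the Cayley table of $S_{44}$, I would select an element $a$ and a multiplicatively absorbing element $b$ that is additively separated from the powers of $a$, and set $\varphi(y)=a$ for $y\in c(\bq)$ and $\varphi(y)=b$ otherwise. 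Then each $\bu_i$ meets the complement of $c(\bq)$, so $\varphi(\bu_i)=b$ for all $i$ and hence $\varphi(\bu)=b$, whereas $\varphi(\bq)$ is a power of $a$ with $\varphi(\bq)+b\neq b$; this yields $\varphi(\bu)+\varphi(\bq)\neq\varphi(\bu)$, the desired contradiction.

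Finally --- and this is the crux --- fix $x\in M_1(\bq)$, so $m(x,\bq)=1$, and assume for contradiction that $m(x,\bu_i)\ge 2$ for every $\bu_i\in D_\bq(\bu)$. The hard part will be to design a substitution sensitive to whether $x$ occurs exactly once or at least twice. I would take an element $a\in S_{44}$ with $a\neq a^2$ but $a^2=a^3$ (so that exponent $1$ is distinguished from every exponent $\ge 2$), put $\varphi(x)=a$, send the remaining variables of $c(\bq)$ to elements chosen so that the single occurrence of $x$ survives as a first power in $\varphi(\bq)$, and send all variables outside $c(\bq)$ to the absorbing element $b$ from the previous step. By $D_\bq(\bu)\neq\emptyset$, already secured, the summands contributing to $\varphi(\bu)$ beyond $b$ are precisely those in $D_\bq(\bu)$, and for each such summand the assumption $m(x,\bu_i)\ge 2$ turns the $x$-contribution into $a^2\neq a$. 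Consequently every relevant $\varphi(\bu_i)$ carries the value associated with $a^2$, while $\varphi(\bq)$ carries the strictly different value associated with $a$, forcing $\varphi(\bu)+\varphi(\bq)\neq\varphi(\bu)$.

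The genuine obstacle is the finite but delicate verification, directly from the multiplication table of $S_{44}$, that such an element $a$ exists and that the one-versus-many occurrence of $x$ is neither masked by the images of the other variables of $c(\bq)$ nor washed out by the additive join. Concretely, one must confirm that the $a$-valued product of $\bq$ is not below, in the additive order, the join of the $a^2$-valued products coming from $D_\bq(\bu)$ together with $b$; separating the subcases $x\in c(\bu_i)$ and $x\notin c(\bu_i)$ is where the careful bookkeeping lies, and it is exactly the interaction of the additive semilattice of $S_{44}$ with its multiplication that makes this step the substantive part of the argument.
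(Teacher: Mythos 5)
Your overall strategy (contraposition via carefully chosen homomorphisms into $S_{44}$) is the right one, and for the record the paper itself gives no argument here at all --- it simply cites \cite[Lemma 7.12]{yrzs} --- so there is no in-paper proof to match your attempt against. The first two steps of your proposal are essentially complete: $S_{44}$ does contain a copy of $N_2$, which forces $\ell(\bq)\ge 2$ via \cite[Lemma 1.1]{sr}, and the substitution sending $c(\bq)$ to a fixed element and everything else to a multiplicatively absorbing additive bottom does establish $D_\bq(\bu)\neq\emptyset$.

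The gap is in the third step, which you yourself flag as unverified. You never say where the variables of $c(\bq)\setminus\{x\}$ are sent, and the argument stands or falls on that choice: in a three-element semiring there is no room for those images to be ``chosen so that the single occurrence of $x$ survives as a first power'' unless $S_{44}$ happens to contain an element acting as a multiplicative identity on $a$ whose powers stay additively above $\varphi(\bq)$. That is a fact about the particular Cayley table of $S_{44}$, not a consequence of the abstract shape of your construction, and it is exactly the part you defer. To close it: from the subdirect decomposition used in the proof of Proposition \ref{pro37901} one reads off that $S_{44}$ is isomorphic to the subsemiring $\{1,2,3\}$ of $S_{(4,379)}$, with additive order $3<1<2$, in which $2$ is a multiplicative identity and the additive top, $3$ is multiplicatively absorbing and the additive bottom, and $1\cdot 1=3$. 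Taking $a=1$, $b=3$, $\varphi(x)=1$, $\varphi(y)=2$ for $y\in c(\bq)\setminus\{x\}$ and $\varphi(y)=3$ otherwise gives $\varphi(\bq)=1$, while the assumption $m(x,\bu_i)\ge 2$ for every $\bu_i\in D_\bq(\bu)$ forces $\varphi(\bu_i)=3$ for those summands and $\varphi(\bu_i)=3$ for all summands outside $D_\bq(\bu)$, whence $\varphi(\bu)=3\neq 1=\varphi(\bu)+\varphi(\bq)$. With these concrete values supplied the argument is correct; as written, the decisive computation is only promised, not performed.
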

\begin{proof}
This can be found in \cite[Lemma 7.12]{yrzs}.
\end{proof}

\begin{pro}\label{pro37901}
$\mathsf{V}(S_{(4, 379)})$ is the ai-semiring variety defined by the identities
\begin{align}
xy& \approx yx; \label{37901}\\
xy& \approx x^2y+xy^2;    \label{37905}\\
xy& \approx xy+xyz;    \label{37903}\\
x+yz& \approx x+yz+xz.    \label{37904}
\end{align}
\end{pro}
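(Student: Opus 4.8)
The plan is to follow the pattern of Propositions~\ref{pro28501} and~\ref{pro36201}. First I would verify directly on the Cayley table that $S_{(4,379)}$ satisfies \eqref{37901}--\eqref{37904}. The one preliminary observation worth isolating is that substituting $y:=x$ in \eqref{37905} gives $x^2\approx x^3+x^3\approx x^3$, and hence $x^n\approx x^2$ for every $n\ge2$. Combined with the commutativity \eqref{37901}, this reduces any word to a normal form, so a nontrivial identity $\bu\approx\bu+\bq$ of $S_{(4,379)}$ may be taken with $\bu_i,\bq\in X_c^+$ and with
\[
\bq\approx\Big(\prod_{z\in M_1(\bq)}z\Big)\Big(\prod_{w\in c(\bq)\setminus M_1(\bq)}w^2\Big),
\]
that is, the letters of $\bq$ occurring once are recorded by $M_1(\bq)$ and all others appear squared. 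It then remains to derive every such identity from \eqref{37901}--\eqref{37904} and the axioms of $\mathbf{AI}$.

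Next I would exhibit $S_{(4,379)}$ as a subdirect product. The relation identifying $1$ with $4$ is a congruence whose quotient is $S_{44}$, while the relation collapsing $\{1,2,3\}$ to a single class is a complementary congruence with a two-element quotient; their intersection is the identity, so $\bu\approx\bu+\bq$ holds in both factors. Sending every variable to the minimal element $4$ in the two-element factor makes every length-$1$ summand evaluate to the bottom while $\bq$ evaluates to the top, so $\bu\approx\bu+\bq$ can hold there only if $L_{\ge2}(\bu)\ne\emptyset$; fix $\bu_0\in L_{\ge2}(\bu)$. Feeding the $S_{44}$-factor into Lemma~\ref{lem4401} then supplies the combinatorial data I shall use: $\ell(\bq)\ge2$, $D_\bq(\bu)\ne\emptyset$, and, for each $z\in M_1(\bq)$, a witness $\bu_{k_z}\in D_\bq(\bu)$ with $m(z,\bu_{k_z})\le1$.

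The derivation itself I would organise in two stages. For each $z\in M_1(\bq)$ I first produce $\bu\approx\bu+\bv_z$, where $\bv_z=z\prod_{x\in c(\bq),\,x\ne z}x^2$ carries $z$ at exponent one and every other letter of $\bq$ squared: starting from $\bu_{k_z}$ I adjoin the missing content of $c(\bq)$ by \eqref{37903} and collapse exponents through $x^n\approx x^2$, the hypothesis $m(z,\bu_{k_z})\le1$ guaranteeing that $z$ survives at exponent one. When $\bu_{k_z}$ is a single variable this needs a short bootstrap: using \eqref{37904} against $\bu_0$ I first build a clean length-two word on the relevant letters and only then pump. In the second stage I assemble $\bq$ from the $\bv_z$ by iterating \eqref{37905} in the form $z\,T\approx z^2T+zT^2$, where $T$ is the partial product already holding some of the $M_1(\bq)$-letters at exponent one; each application lowers exactly one letter from exponent two to exponent one, and after $|M_1(\bq)|$ steps one reaches
\[
\bu\approx\bu+\prod_{z\in M_1(\bq)}z\cdot\prod_{w\in c(\bq)\setminus M_1(\bq)}w^2\approx\bu+\bq,
\]
the case $M_1(\bq)=\emptyset$ reducing to a single pump producing $\prod_{x\in c(\bq)}x^2$.

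The principal obstacle is the bookkeeping of stage one. Since \eqref{37903} and \eqref{37904} can only adjoin letters, never delete them, the difficulty is to realise each $\bv_z$ with content exactly $c(\bq)$ and with $z$ pinned at exponent one, even though the bootstrap through \eqref{37904} momentarily introduces letters foreign to $c(\bq)$ coming from $\bu_0$. The resolution I would rely on is that \eqref{37904} applied to a word ending in a chosen letter of $c(\bq)$ yields $z$ times precisely that letter, so the intermediate foreign words can be steered back into $c(\bq)$ and the auxiliary summands are reabsorbed; verifying this closure, and checking that the telescoping in \eqref{37905} keeps every non-$M_1(\bq)$ letter at exponent two throughout, is where the genuine work lies.
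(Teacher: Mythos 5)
Your proposal is correct and follows essentially the same route as the paper: the same subdirect decomposition into $T_2$ and $S_{44}$, the same reliance on Lemma~\ref{lem4401}, and the same division of labour among the four identities, with \eqref{37905} reducing everything to words carrying a single exponent-one letter and \eqref{37903}, \eqref{37904} producing those words from a witness in $D_\bq(\bu)$. The only real difference is one of bookkeeping: the paper applies \eqref{37905} top-down to rewrite $\bq$ as a \emph{sum} of the words you call $\bv_z$ and then derives each summand, whereas you derive the $\bv_z$ first and telescope them back into $\bq$; both are sound. In the last step the paper is slightly slicker than you are: it always pumps a word $\bu_j\in L_{\ge 2}(\bu)$ with \eqref{37903} and then transfers the tail onto the witness $\bu_\ell\in D_\bq(\bu)$ via \eqref{37904} (the product $\bu_\ell\bq_1$ normalizes directly to the target), which makes the case $\ell(\bu_\ell)=1$ completely uniform and avoids the length-one bootstrap that you single out as the delicate point. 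Your bootstrap does go through as described, so this is a matter of economy rather than correctness.
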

\begin{proof}
It is easy to check that $S_{(4, 379)}$ satisfies the identities \eqref{37901}--\eqref{37904}.
By identifying $y$ and $x$ in \eqref{37905}, one can derive the identity
\begin{align}
x^3& \approx x^2. \label{37902}
\end{align}
In the remainder it is enough to prove that every ai-semiring identity of $S_{(4, 379)}$
is derivable from \eqref{37901}--\eqref{37902} and the identities defining $\mathbf{AI}$.
Let $\bu \approx \bu+\bq$ be such a nontrivial identity,
where $\bu=\bu_1+\bu_2+\cdots+\bu_n$ and $\bu_i, \bq\in X^+$, $1 \leq i \leq n$.
It is a routine matter to verify that $S_{(4, 379)}$ is isomorphic to
a subdirect product of $T_2$ and $S_{44}$.
So both $T_2$ and $S_{44}$ satisfy $\bu \approx \bu+\bq$.
This implies that $\ell(\bu_j)\geq 2$ for some $\bu_j\in \bu$.
By Lemma \ref{lem4401} we have that $\ell(\bq)\geq 2$ and $D_\bq(\bu)\neq\emptyset$.

\textbf{Case 1.}
$M_1(\bq)$ is empty. Then $m(x, \bq)\geq 2$ for all $x\in c(\bq)$.
Take $\bu_i$ in $D_{\bq}(\bu)$.
Then
\begin{align*}
\bu
&\approx \bu+\bu_i+\bu_j\\
&\approx \bu+\bu_i+\bu_j+\bu_j\bq &&(\text{by}~(\ref{37903}))\\
&\approx \bu+\bu_i+\bu_j+\bu_j\bq+\bu_i\bq &&(\text{by}~(\ref{37904}))\\
&\approx \bu+\bu_i+\bu_j+\bu_j\bq+\bq. &&(\text{by}~(\ref{37901}),(\ref{37902}))
\end{align*}
This derives $\bu \approx \bu+\bq$.

\textbf{Case 2.}
$M_1(\bq)$ is nonempty. It follows from Lemma \ref{lem4401} that
for any $x \in M_{1}(\bq)$, there exists $\bu_i \in D_\bq(\bu)$ such that $m(x, \bu_i)\leq 1$.
Assume that
\[
c(\bq)=\{x_1, \ldots, x_n, y_1, \ldots, y_m\},
\]
where $m(x_i, \bq )\geq 2$, $m(y_j, \bq )=1$, $0 \leq i \leq n$, $1 \leq j \leq m$.
Then
\begin{align*}
\bq
&\approx x_1^2x_2^2\cdots x_n^2y_1y_2\cdots y_m &&(\text{by}~ \eqref{37901}, \eqref{37902})\\
&\approx\sum\limits_{1\leq j\leq m}x_1^2x_2^2\cdots x_n^2y_1^2\cdots y_{j-1}^2y_{j+1}^2\cdots y_m^2y_j.
&&(\text{by}~\eqref{37901}, \eqref{37905}, \eqref{37902})
\end{align*}
So we only need to consider the case that $\bq=x_1^2x_2^2\cdots x_k^2y$.
Then by Lemma \ref{lem4401} there exists $\bu_{\ell} \in D_\bq(\bu)$ such that $m(y, \bu_{\ell})\leq 1$.
Now we have
\begin{align*}
\bu
&\approx \bu+\bu_{\ell}+\bu_j\\
&\approx \bu+\bu_{\ell}+\bu_j+\bu_j\bq_1 &&(\text{by}~(\ref{37903}))\\
&\approx \bu+\bu_{\ell}+\bu_j+\bu_j\bq_1+\bu_{\ell}\bq_1 &&(\text{by}~(\ref{37904}))\\
&\approx \bu+\bu_{\ell}+\bu_j+\bu_j\bq_1+\bq, &&(\text{by}~(\ref{37901}),(\ref{37902}))
\end{align*}
where $\bq_1=\bq$ if $m(y, \bu_{\ell})=0$, and $\bq_1=x_1^2x_2^2\cdots x_k^2$ if $m(y, \bu_{\ell})=1$.
This derives the identity $\bu \approx \bu+\bq$.
\end{proof}

\begin{lem}\label{lem4601}
Let $\bu\approx \bu+\bq$ be a nontrivial ai-semiring identity such that
$\bu=\bu_1+\bu_2+\cdots+\bu_n$ and $\bu_i, \bq \in X^+$, $1\leq i \leq n$.
Suppose that $\bu\approx \bu+\bq$ is satisfied by $S_{46}$.
Then $\ell(\bq)\geq 2$.
If $m(t(\bq), \bq)=1$,
then there exists $\bu_i \in D_\bq(\bu)$ such that
$t(\bq)\notin c(p(\bu_i))$.
\end{lem}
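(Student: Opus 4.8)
The plan is to solve the equational problem for $S_{46}$ by working inside its additive semilattice and evaluating words directly. First I would read off from the description of $S_{46}$ in \cite{zrc} the following facts: its additive reduct is the three-element chain $0 < a < 1$ (so the sum of two elements is the larger of the two in this order), the element $1$ is a multiplicative left identity, and $xy = 0$ whenever $x \neq 1$. From this, a routine left-to-right evaluation gives a sharp rule: for a word $\bw = z_1 z_2 \cdots z_k$ and a homomorphism $\varphi$ with $a_i = \varphi(z_i)$, one has $\varphi(\bw) = 1$ exactly when every $a_i = 1$; $\varphi(\bw) = a$ exactly when $a_k = a$ and $a_1 = \cdots = a_{k-1} = 1$; and $\varphi(\bw) = 0$ in all other cases. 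Since the additive reduct is a semilattice, the identity $\bu \approx \bu + \bq$ holds in $S_{46}$ if and only if $\varphi(\bq) \le \max_{1\le i \le n}\varphi(\bu_i)$ (chain order) for every homomorphism $\varphi$ into $S_{46}$, so the whole argument consists in exhibiting violating assignments when the asserted conditions fail.

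For $\ell(\bq) \ge 2$ I would argue by contradiction. If $\ell(\bq) = 1$, write $\bq = x$; nontriviality forces $x$ not to occur as a length-one summand of $\bu$, since otherwise $\bu + \bq = \bu$ already. Take $\varphi(x) = a$ and $\varphi(y) = 0$ for every variable $y \neq x$. Then $\varphi(\bq) = a$, while by the evaluation rule each $\bu_i$ evaluates to $a$ only if $\bu_i = x$ and to $0$ otherwise; as $x$ is not a summand, $\max_i \varphi(\bu_i) = 0 < a$, contradicting that the identity holds. Hence $\ell(\bq) \ge 2$.

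For the conditional statement I would use the contrapositive: assume $m(t(\bq), \bq) = 1$ but that every $\bu_i \in D_\bq(\bu)$ satisfies $t(\bq) \in c(p(\bu_i))$ (this also covers, vacuously, the case $D_\bq(\bu) = \emptyset$). Put $t = t(\bq)$ and note that $m(t,\bq) = 1$ gives $t \notin c(p(\bq))$. Define $\varphi(t) = a$, $\varphi(z) = 1$ for $z \in c(\bq) \setminus \{t\}$, and $\varphi(z) = 0$ for $z \notin c(\bq)$. Then every letter of $\bq$ before the last maps to $1$ and its last letter $t$ maps to $a$, so $\varphi(\bq) = a$. Now I check $\varphi(\bu_i) = 0$ for all $i$: if $\bu_i \notin D_\bq(\bu)$, it contains a variable outside $c(\bq)$, mapped to $0$, and a $0$-letter forces $\varphi(\bu_i) = 0$ by the evaluation rule; if $\bu_i \in D_\bq(\bu)$, then by hypothesis $t$ occurs in $p(\bu_i)$, i.e. $\bu_i$ has a non-final letter mapping to $a \neq 1$, which again forces $\varphi(\bu_i) = 0$. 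Thus $\max_i \varphi(\bu_i) = 0 < a = \varphi(\bq)$, so the identity fails in $S_{46}$, contradicting the hypothesis. This produces the desired $\bu_i \in D_\bq(\bu)$ with $t(\bq)\notin c(p(\bu_i))$.

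I expect the main obstacle to be the correct design of the homomorphism in the conditional step, i.e. arranging the three-way split of the variables ($t \mapsto a$, the remaining variables of $c(\bq) \mapsto 1$, everything outside $c(\bq) \mapsto 0$) so that it does all the work simultaneously: it must raise $\bq$ to $a$ while collapsing to $0$ both the summands using a foreign variable and the summands in $D_\bq(\bu)$ whose prefix contains $t$. Everything rests on the sharp evaluation rule (the value $a$ is attained only when the unique non-$1$ letter is the very last one), so the real crux is establishing that rule from the multiplication of $S_{46}$ and then observing that the two hypotheses $m(t,\bq)=1$ and $t \in c(p(\bu_i))$ are precisely what keeps every summand at $0$.
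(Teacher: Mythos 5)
Your argument is correct, but note that the paper does not prove this lemma at all: its ``proof'' is a one-line citation to \cite[Lemma 7.15]{yrzs}. You have therefore supplied a genuinely self-contained alternative, and everything in it checks out. The one point on which the whole argument hinges is your description of $S_{46}$ (three-element chain $0<a<1$, top element a left identity, $xy=0$ whenever $x\neq 1$), which you say you would ``read off'' from \cite{zrc}; this must actually be verified, but it is in fact forced by the present paper itself: the only way to realize $S_{(4,380)}$ as a subdirect product of $T_2$ and a three-element algebra, as claimed in the proof of Proposition \ref{pro38001}, is via the congruence with classes $\{1,4\},\{2\},\{3\}$, whose quotient is exactly the algebra you describe (and the orientation --- left identity rather than right identity --- is the one consistent with the lemma's reference to $t(\bq)$ and $p(\bu_i)$ rather than to first letters). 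Granting that, your evaluation rule is correct, the semilattice reformulation $\varphi(\bq)\le\max_i\varphi(\bu_i)$ is correct since the additive reduct is a chain, and both separating assignments do what you claim: in the first, nontriviality indeed rules out $\bq$ being a summand of $\bu$; in the second, any letter mapped to $0$ kills a summand outright, and a non-final occurrence of $t(\bq)$ (mapped to $a$) likewise kills it, so the contrapositive goes through, with the case of a length-one member of $D_\bq(\bu)$ (empty prefix, hence already a witness) absorbed correctly into the contrapositive hypothesis. What your route buys is independence from \cite{yrzs}; what it costs is the obligation to pin down the multiplication table of $S_{46}$ explicitly, which the citation avoids.
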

\begin{proof}
This can be found in \cite[Lemma 7.15]{yrzs}.
\end{proof}

\begin{pro}\label{pro38001}
$\mathsf{V}(S_{(4, 380)})$ is the ai-semiring variety defined by the identities
\begin{align}
xyz& \approx yxz; \label{38001}\\
x^2y& \approx xy; \label{38002}\\
x_1+x_2x_3& \approx x_1+x_2x_3+x_1x_4+x_5x_1.    \label{38005}
\end{align}
\end{pro}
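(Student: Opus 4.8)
The plan is to follow the template of Propositions \ref{pro28101} and \ref{pro37901}: first record that $S_{(4, 380)}$ satisfies \eqref{38001}--\eqref{38005} (a routine check on its multiplication table), and then show that every nontrivial identity $\bu \approx \bu + \bq$ of $S_{(4, 380)}$, with $\bu = \bu_1 + \cdots + \bu_n$ and $\bu_i, \bq \in X^+$, is derivable from \eqref{38001}--\eqref{38005} and the identities defining $\mathbf{AI}$. The first preparatory step is to extract a normal form from \eqref{38001} and \eqref{38002}: since $xyz \approx yxz$ permutes the letters of a word that precede its last letter, and $x^2y \approx xy$ then collapses a repeated prefix letter (bring two copies to the front, reduce), every word $\bw$ with $\ell(\bw) \geq 2$ satisfies
\[
\bw \approx x_1 x_2 \cdots x_k\, t(\bw), \qquad \{x_1, \ldots, x_k\} = c(p(\bw)),
\]
so $\bw$ is determined, modulo the axioms, by the pair $(c(p(\bw)), t(\bw))$.

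Next I would locate the subdirect factors. One checks that $S_{(4, 380)}$ is isomorphic to a subdirect product of $S_{46}$ and a second three-element ai-semiring $T$ (its other subdirectly irreducible quotient), so both factors satisfy $\bu \approx \bu + \bq$. From $S_{46}$ and Lemma \ref{lem4601} I read off $\ell(\bq) \geq 2$ and that, when $m(t(\bq), \bq) = 1$, there is some $\bu_i \in D_{\bq}(\bu)$ with $t(\bq) \notin c(p(\bu_i))$. The factor $T$ is a commutative, content-type semiring; its equational description supplies the two facts the derivation needs but that $S_{46}$ does not provide, namely $L_{\geq 2}(\bu) \neq \emptyset$ and $D_{\bq}(\bu) \neq \emptyset$. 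That $T$ is genuinely required is visible from $x \approx x + x^2$, which holds in $S_{46}$ yet fails in $S_{(4, 380)}$ (take the minimal element mapped to the bottom of $T$), so $S_{46}$ alone cannot force $L_{\geq 2}(\bu) \neq \emptyset$.

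The engine of the derivation is a single consequence of \eqref{38005}: if $\bu$ has a summand $\bs$ together with a summand of length at least $2$ (available since $L_{\geq 2}(\bu) \neq \emptyset$), then substituting $x_1 \mapsto \bs$, $x_2 x_3 \mapsto$ the long summand, and $x_4, x_5$ into arbitrary words gives $\bu \approx \bu + \bs\bg + \bh\bs$ for any $\bg, \bh$. The companion term is harmless: once $\bu \approx \bu + \bq + \be$ has been derived with $\be$ the unwanted summand, adding $\bq$ to both sides and using idempotency of $+$ gives $\bu + \bq \approx \bu + \bq + \be \approx \bu$. Hence it suffices to adjoin one summand equal, modulo the normal form, to $\bq$. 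Taking $\bs \in D_{\bq}(\bu)$ (so $c(\bs) \subseteq c(\bq)$), in the case $m(t(\bq), \bq) \geq 2$ I would adjoin $\bs\bg$ with $\bg = \bigl(\prod_{a \in c(\bq)} a\bigr) t(\bq)$; its normal form has last letter $t(\bq)$ and prefix content $c(\bq)$, i.e. it is $\approx \bq$. In the case $m(t(\bq), \bq) = 1$ I would use the anchor $\bu_i$ from Lemma \ref{lem4601}, which has $t(\bq) \notin c(p(\bu_i))$: prepending $\prod_{a \in c(\bq)\setminus\{t(\bq)\}} a$ to $\bu_i$ when $t(\bu_i) = t(\bq)$, or appending $\bigl(\prod_{a \in c(\bq)\setminus\{t(\bq)\}} a\bigr) t(\bq)$ when $t(\bq) \notin c(\bu_i)$, yields in either case a word of prefix content $c(\bq)\setminus\{t(\bq)\}$ and last letter $t(\bq)$, whose normal form is $\approx \bq$.

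I expect the main obstacle to be the case $m(t(\bq), \bq) = 1$. There the normal form of $\bq$ must keep $t(\bq)$ as a single terminal occurrence, so the summand we grow must not carry $t(\bq)$ in its prefix; this is exactly what the condition $t(\bq) \notin c(p(\bu_i))$ in Lemma \ref{lem4601} guarantees, and making the bookkeeping of last letters and prefix contents land precisely on $\bq$ (rather than on the Case-1 shape with a spurious extra $t(\bq)$) is the delicate point. A secondary subtlety, to be settled at the outset, is the exact equational analysis of the commutative factor $T$: it is what excludes identities such as $x \approx x + x^2$ and what furnishes $L_{\geq 2}(\bu) \neq \emptyset$ and $D_{\bq}(\bu) \neq \emptyset$, both of which are used silently in every application of the growth move above.
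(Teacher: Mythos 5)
Your proposal is correct and takes essentially the same route as the paper's proof: a subdirect decomposition harvesting $\ell(\bq)\geq 2$, $L_{\geq 2}(\bu)\neq\emptyset$, $D_{\bq}(\bu)\neq\emptyset$ and the anchor condition of Lemma \ref{lem4601}, followed by a single application of \eqref{38005} with $x_1\mapsto \bu_i\in D_{\bq}(\bu)$ and $x_2x_3\mapsto$ a summand of length at least $2$, concluded via the normal form $\bw\approx x_1\cdots x_k\,t(\bw)$ extracted from \eqref{38001}--\eqref{38002} and the same case split on $m(t(\bq),\bq)$ and on how $t(\bq)$ sits in $\bu_i$. The only cosmetic differences are where the auxiliary facts are read off (the paper uses the quotient $T_2$ for $L_{\geq 2}(\bu)\neq\emptyset$ and the subalgebra $\{2,3\}\cong D_2$ for $D_{\bq}(\bu)\neq\emptyset$, rather than a single three-element commutative quotient) and your particular choices of the words substituted for $x_4$ and $x_5$.
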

\begin{proof}
It is easy to check that $S_{(4, 380)}$ satisfies the identities \eqref{38001}--\eqref{38005}.
In the remainder we need only prove that every ai-semiring identity of $S_{(4, 380)}$
can be derived by \eqref{38001}--\eqref{38005} and the identities defining $\mathbf{AI}$.
Let $\bu \approx \bu+\bq$ be such a nontrivial identity,
where $\bu=\bu_1+\bu_2+\cdots+\bu_n$ and $\bu_i, \bq\in X^+$, $1 \leq i \leq n$.
It is a routine matter to verify that $S_{(4, 380)}$ is isomorphic to
a subdirect product of $T_2$ and $S_{46}$.
So both $T_2$ and $S_{46}$ satisfy $\bu \approx \bu+\bq$.
This implies that $\ell(\bu_j)\geq 2$ for some $\bu_j\in \bu$.
By Lemma \ref{lem4601} we obtain that $\ell(\bq)\geq 2$.
Also, if $m(t(\bq), \bq)=1$,
then there exists $\bu_i \in D_\bq(\bu)$ such that
$t(\bq)\notin c(p(\bu_i))$.
Since $D_2$ is isomorphic to $\{2, 3\}$, it follows that $D_\bq(\bu)\neq\emptyset$.
Take $\bu_i$ in $D_\bq(\bu)$, and $t(\bq)\notin c(p(\bu_i))$ if $m(t(\bq), \bq)=1$.
Now we have
\[
\bu \approx \bu+\bu_i+\bu_j \stackrel{\eqref{38005}}\approx \bu+\bu_i+\bu_j+\bu_i\bq+p(\bq)\bu_i.
\]
If $m(t(\bq), \bq)\geq 2$ or $m(t(\bq), \bq)=1$, $m(t(\bq),\bu_i)=0$,
then the identities \eqref{38001} and \eqref{38002} imply
\[
\bu_i\bq \approx \bq.
\]
If $m(t(\bq), \bq)=1$, $m(t(\bq),\bu_i)=1$ and $t(\bq)=t(\bu_i)$, then
the identities \eqref{38001} and \eqref{38002} imply
\[
p(\bq)\bu_i \approx \bq.
\]
This derives the identity $\bu \approx \bu+\bq$.
\end{proof}

It is easy to see that $S_{(4, 383)}$ and $S_{(4, 380)}$ have dual multiplications.
By Proposition $\ref{pro38001}$ we immediately deduce
\begin{cor}
The ai-semiring $S_{(4, 383)}$ is finitely based.
\end{cor}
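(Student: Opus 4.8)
The plan is to avoid any fresh equational analysis and instead deduce the corollary from Proposition~\ref{pro38001} by duality, exactly as in the corollaries following Propositions~\ref{pro35701} and~\ref{pro36201}. First I would record the purely computational fact that the Cayley table for multiplication of $S_{(4, 383)}$ is the transpose of that of $S_{(4, 380)}$: writing $\cdot$ for the product of $S_{(4, 380)}$ and $*$ for the product of $S_{(4, 383)}$, one checks directly from Table~\ref{tb1} that $a * b = b \cdot a$ for all $a, b \in \{1, 2, 3, 4\}$. Since the two algebras share the same additive reduct, this says exactly that $S_{(4, 383)}$ is the opposite semiring $S_{(4, 380)}^{d}$; one also notes in passing that the opposite of any ai-semiring is again an ai-semiring, the two distributive laws simply trading places.

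Next I would invoke the self-duality of the finite basis property for ai-semirings. For a word $\bw = x_{i_1} \cdots x_{i_k}$ write $\bw^{d} = x_{i_k} \cdots x_{i_1}$ for its reversal, and extend the operation $(\cdot)^{d}$ to terms by reversing each summand. The key observation is that a homomorphism $\psi \colon P_f(X^+) \to S^{d}$ and the homomorphism $\varphi \colon P_f(X^+) \to S$ with the same values on $X$ satisfy $\psi(\bt) = \varphi(\bt^{d})$ for every term $\bt$, because a product read in $S^{d}$ is the reverse product read in $S$. Hence $S^{d}$ satisfies $\bu \approx \bv$ if and only if $S$ satisfies $\bu^{d} \approx \bv^{d}$, and since $(\cdot)^{d}$ is an involution on the set of ai-semiring identities it carries any axiomatization of $\mathsf{V}(S)$ to an axiomatization of $\mathsf{V}(S^{d})$ of the same cardinality.

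Finally I would apply this with $S = S_{(4, 380)}$ and $S^{d} = S_{(4, 383)}$: Proposition~\ref{pro38001} supplies the explicit finite basis \eqref{38001}--\eqref{38005} for $\mathsf{V}(S_{(4, 380)})$, so reversing each of these three identities yields a finite basis for $\mathsf{V}(S_{(4, 383)})$, whence $S_{(4, 383)}$ is finitely based. I do not anticipate any genuine obstacle here: the whole argument rests on the general duality principle together with the one-line verification that the two multiplication tables are mutual transposes, and all of the substantive work has already been carried out in the proof of Proposition~\ref{pro38001}.
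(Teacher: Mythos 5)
Your proposal is correct and is exactly the paper's argument: the paper's proof of this corollary consists of the single observation that $S_{(4, 383)}$ and $S_{(4, 380)}$ have dual multiplications, followed by an appeal to Proposition~\ref{pro38001}. You have merely spelled out the standard duality principle that the paper leaves implicit, and your transposition check of the two Cayley tables is accurate.
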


\begin{lem}\label{lem4701}
Let $\bu\approx \bu+\bq$ be a nontrivial ai-semiring identity such that
$\bu=\bu_1+\bu_2+\cdots+\bu_n$ and $\bu_i, \bq \in X^+$, $1\leq i \leq n$.
Suppose that $\bu\approx \bu+\bq$ is satisfied by $S_{47}$.
Then either $\ell(\bq)\geq 3$ or $\ell(\bq)=2$, $L_{\leq2}(\bu) \cap D_\bq(\bu)$ is nonempty.
\end{lem}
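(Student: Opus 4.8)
The plan is to solve the equational problem for $S_{47}$ directly, by testing the identity against a single well-chosen homomorphism $\varphi\colon P_f(X^+)\to S_{47}$. First I would record the two structural facts about $S_{47}$ that drive everything, reading them off the Cayley tables (equivalently, from the copy of $S_{47}$ realized as the subsemiring $\{1,2,3\}$ of $S_{(4,385)}$). Write the three elements as a chain $\gamma<\beta<\alpha$ on which $+$ is the join, so that $\varphi(\bu)=\max_i\varphi(\bu_i)$ for any homomorphism. Multiplicatively, the only product different from the bottom element is $\alpha\cdot\alpha=\beta$, while every other product equals $\gamma$; since $\gamma$ is then a two-sided zero and $\beta\cdot x=\gamma=\gamma\cdot x$ for every $x$, every word of length $\ge 3$ evaluates to $\gamma$ under any homomorphism. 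Hence a word $\bw$ can have $\varphi(\bw)\ne\gamma$ only when $\ell(\bw)\le 2$ and none of its letters is sent to $\gamma$.

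The first step disposes of the length of $\bq$. The two-element subsemiring $\{\beta,\gamma\}$ is a copy of the null semiring $N_2$ (additive $2$-chain with constant multiplication $\gamma$), so $N_2$ also satisfies $\bu\approx\bu+\bq$. Exactly as in the proof of Proposition \ref{pro27701}, nontriviality of the identity then forces $\ell(\bq)\ge 2$. Thus either $\ell(\bq)\ge 3$, in which case the conclusion holds with nothing further to prove, or $\ell(\bq)=2$, which is the case I must treat.

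The main step treats $\ell(\bq)=2$. Define $\varphi$ by sending every variable of $c(\bq)$ to $\alpha$ and every other variable to $\gamma$. Writing $\bq=ab$ with $a,b\in c(\bq)$ (uniformly covering both $\bq=x^2$ and $\bq=xy$), both factors map to $\alpha$, so $\varphi(\bq)=\alpha\cdot\alpha=\beta$. For a summand $\bu_i$: if $c(\bu_i)\not\subseteq c(\bq)$ then some letter maps to the zero $\gamma$ and $\varphi(\bu_i)=\gamma$; if $c(\bu_i)\subseteq c(\bq)$ then every letter maps to $\alpha$, so $\varphi(\bu_i)=\alpha^{\ell(\bu_i)}$, which is $\alpha,\beta,\gamma$ according as $\ell(\bu_i)=1,2,\ge 3$. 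Consequently $\varphi(\bu_i)\in\{\alpha,\beta\}$ exactly when $\bu_i\in L_{\leq 2}(\bu)\cap D_\bq(\bu)$. Since $S_{47}$ satisfies the identity, $\varphi(\bu)=\varphi(\bu)+\varphi(\bq)$; as $+$ is the maximum and $\varphi(\bq)=\beta$, this forces $\varphi(\bu)=\max_i\varphi(\bu_i)\ge\beta$, so at least one summand attains a value in $\{\alpha,\beta\}$. That summand lies in $L_{\leq 2}(\bu)\cap D_\bq(\bu)$, which is therefore nonempty.

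The table verifications — that $\gamma$ is a two-sided zero, that $\alpha^{k}=\gamma$ for $k\ge 3$, and that $\varphi(\bq)=\beta$ in both sub-cases of $\ell(\bq)=2$ — are routine, so I do not expect a genuine obstacle; the single idea carrying the argument is that the homomorphism ``mark the letters of $\bq$ with $\alpha$, send all others to the zero $\gamma$'' simultaneously encodes the containment $c(\bu_i)\subseteq c(\bq)$ and the length bound $\ell(\bu_i)\le 2$. The one point to handle with care is the length-$\ge 3$ collapse: a summand that lies in $D_\bq(\bu)$ but has length $\ge 3$ must be checked to evaluate to $\gamma$ rather than to $\beta$, and this is precisely where the computation $\alpha\cdot\alpha\cdot\alpha=\beta\cdot\alpha=\gamma$ is used.
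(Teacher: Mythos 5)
Your proof is correct and follows essentially the same route as the paper: the $N_2$ subalgebra rules out $\ell(\bq)\le 1$, and for $\ell(\bq)=2$ the paper uses exactly your homomorphism (letters of $c(\bq)$ to the top element of $S_{47}$, all others to the multiplicative zero), merely phrased as a proof by contradiction rather than directly. The extra structural details you verify about $S_{47}$ (the zero, the collapse of words of length $\ge 3$) are the same facts the paper leaves implicit in its "it is easy to see" step.
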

\begin{proof}
Suppose that $S_{47}$ satisfies the nontrivial identities $\bu\approx \bu+\bq$.
Since $N_2$ is isomorphic to $\{1, 2\}$, it follows that $N_2$ satisfies $\bu\approx \bu+\bq$ and so $\ell(\bq)\geq 2$.
Consider the case that $\ell(\bq)=2$.
Suppose by way of contradiction that $L_{\leq2}(\bu) \cap D_\bq(\bu)$ is empty.
Let $\varphi: P_f(X^+) \to S_{47}$ be a homomorphism such that $\varphi(x)=3$ for all $x\in c(\bq)$ and $\varphi(x)=2$ otherwise.
It is easy to see that $\varphi(\bu)= 2$, $\varphi(\bq)=1$ and so $\varphi(\bu)\neq \varphi(\bu+\bq)$, a contradiction.
Thus $L_{\leq2}(\bu) \cap D_\bq(\bu)$ is nonempty.
\end{proof}

\begin{pro}\label{pro38501}
$\mathsf{V}(S_{(4, 385)})$ is the ai-semiring variety defined by the identities
\begin{align}
xy& \approx yx; \label{38501}\\
x^2& \approx x^2+xy;    \label{38503}\\
xy& \approx xy+x_1x_2x_3; \label{38502}\\
x_1x_2x_3+y_1& \approx x_1x_2x_3+y_1+y_1y_2. \label{38504}
\end{align}
\end{pro}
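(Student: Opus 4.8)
The plan is to follow the template of Propositions \ref{pro37901} and \ref{pro38001}. First I would verify directly that $S_{(4,385)}$ satisfies \eqref{38501}--\eqref{38504}; this is the routine half, the point being that in $S_{(4,385)}$ every product of length at least $3$ collapses to a single fixed element, so that \eqref{38502} and \eqref{38504} are just absorption statements. It then remains to show that every nontrivial identity $\bu\approx\bu+\bq$ of $S_{(4,385)}$, with $\bu=\bu_1+\cdots+\bu_n$ and $\bu_i,\bq\in X^+$, is derivable from \eqref{38501}--\eqref{38504} together with the axioms of $\mathbf{AI}$. By \eqref{38501} we may assume $\bu_i,\bq\in X_c^+$ throughout.

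The structural input is that $S_{(4,385)}$ is isomorphic to a subdirect product of $T_2$ and $S_{47}$; I would verify this by exhibiting the two congruences collapsing $\{1,2,3\}$ and $\{1,4\}$ respectively, checking that each is compatible with both operations and that their intersection is trivial, and identifying the resulting quotients (the first being the two–element constant–top semiring $T_2$, the second being $S_{47}$). Consequently both $T_2$ and $S_{47}$ satisfy $\bu\approx\bu+\bq$. Exactly as in Proposition \ref{pro37901}, the factor $T_2$ forces $L_{\geq2}(\bu)\neq\emptyset$, so we may fix a summand $\bu_j$ with $\ell(\bu_j)\geq2$. Lemma \ref{lem4701} applied to $S_{47}$ then gives two cases: either $\ell(\bq)\geq3$, or $\ell(\bq)=2$ and $L_{\leq2}(\bu)\cap D_\bq(\bu)\neq\emptyset$.

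In the first case I would argue uniformly. Writing $\bu_j=\bu_j'\bu_j''$ with both factors nonempty and $\bq=z_1z_2\cdots z_k$ with $k\geq3$, the single substitution $x\mapsto\bu_j'$, $y\mapsto\bu_j''$, $x_1\mapsto z_1$, $x_2\mapsto z_2$, $x_3\mapsto z_3\cdots z_k$ into \eqref{38502} yields $\bu_j\approx\bu_j+\bq$, whence $\bu\approx\bu+\bu_j\approx\bu+\bu_j+\bq\approx\bu+\bq$. The essential observation is that \eqref{38502} is used with \emph{words} substituted for its variables, so that a single summand of length $\geq2$ absorbs an arbitrary word of length $\geq3$, even one on fresh variables; this is exactly what is needed, since in this case Lemma \ref{lem4701} imposes no relation between $c(\bq)$ and $c(\bu)$.

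In the second case $\bq=xy$ with $x,y\in c(\bq)$ (allowing $x=y$), and there is a summand $\bu_i$ with $\ell(\bu_i)\leq2$ and $c(\bu_i)\subseteq\{x,y\}$. If $\ell(\bu_i)=2$ then $\bu_i\in\{x^2,xy,y^2\}$: the case $\bu_i=xy$ is trivial, while if $\bu_i=x^2$ then \eqref{38503} gives $\bu\approx\bu+x^2\approx\bu+x^2+xy\approx\bu+\bq$ (and $\bu_i=y^2$ is symmetric via \eqref{38501}). If $\ell(\bu_i)=1$, say $\bu_i=x$, I would first produce a summand $\bw$ of length $\geq3$ (taking $\bw=\bu_j$ if $\ell(\bu_j)\geq3$, otherwise obtaining one from $\bu_j$ by one application of \eqref{38502}) and then apply \eqref{38504} with $x_1x_2x_3\mapsto\bw$, $y_1\mapsto x$, $y_2\mapsto y$ to get $\bw+x\approx\bw+x+xy$, hence $\bu\approx\bu+\bq$; the subcases $\bu_i=y$ and $\bq=x^2$ are handled identically with $y_2\mapsto x$. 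I expect the only real obstacle to be making precise that \eqref{38502}, used with word substitutions, already suffices to adjoin an arbitrarily long $\bq$ on possibly new variables, so that no prior normalisation of $\bq$ to a canonical length-$3$ word is required; verifying that it is the $T_2$ factor (and not $S_{47}$) that yields $L_{\geq2}(\bu)\neq\emptyset$ is the other point deserving care.
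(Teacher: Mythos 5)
Your proposal is correct and takes essentially the same route as the paper's own proof: the same subdirect decomposition into $T_2$ and $S_{47}$, the same appeal to Lemma \ref{lem4701}, and the same case analysis in which \eqref{38502} absorbs $\bq$ when $\ell(\bq)\geq 3$, \eqref{38503} handles a square summand, and a length-$\geq 3$ word created via \eqref{38502} feeds \eqref{38504} when the witness in $D_\bq(\bu)$ has length $1$. The only differences are presentational (explicit substitutions and explicit mention of the trivial subcase $\bu_i=\bq$).
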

\begin{proof}
It is easy to check that $S_{(4, 385)}$ satisfies the identities \eqref{38501}--\eqref{38504}.
In the remainder we need only prove that every ai-semiring identity of $S_{(4, 385)}$
can be derived by \eqref{38501}--\eqref{38504} and the identities defining $\mathbf{AI}$.
Let $\bu \approx \bu+\bq$ be such a nontrivial identity,
where $\bu=\bu_1+\bu_2+\cdots+\bu_n$ and $\bu_i, \bq\in X_c^+$, $1 \leq i \leq n$.
It is a routine matter to verify that $S_{(4, 385)}$ is isomorphic to
a subdirect product of $T_2$ and $S_{47}$.
So both $T_2$ and $S_{47}$ satisfy $\bu \approx \bu+\bq$.
This implies that $\ell(\bu_j)\geq 2$ for some $\bu_j\in \bu$.
By Lemma \ref{lem4701} we obtain that either $\ell(\bq)\geq 3$ or $\ell(\bq)=2$, $L_{\leq2}(\bu) \cap D_\bq(\bu)$ is nonempty.
If $\ell(\bq)\geq 3$, then
\[
\bu\approx \bu+\bu_j\stackrel{(\ref{38502})}\approx \bu+\bu_j+\bq\approx \bu+\bq.
\]
So we obtain $\bu \approx \bu+\bq$.
If $\ell(\bq)=2$ and $L_{\leq2}(\bu) \cap D_\bq(\bu)$ is nonempty,
then there exists $\bu_i\in D_\bq(\bu)$ such that $\ell(\bu_i)\leq 2$.
We only consider the following cases.

\textbf{Case 1.} $\ell(\bu_i)=1$. Then $\bu_i=h(\bq)$ or $\bu_i=t(\bq)$.
Without loss of generality, we may assume that $\bu_i=h(\bq)$. Then
\begin{align*}
\bu
&\approx \bu+\bu_i+\bu_j\\
&\approx \bu+h(\bq)+\bu_j+\bu_jh(\bq) &&(\text{by}~(\ref{38502}))\\
&\approx \bu+h(\bq)+\bu_j+\bu_jh(\bq)+h(\bq)t(\bq) &&(\text{by}~(\ref{38504}))\\
&\approx \bu+h(\bq)+\bu_j+\bu_jh(\bq)+\bq.
\end{align*}

\textbf{Case 2.} $\ell(\bu_i)=2$. Then $\bu_i=h(\bq)^2$ or $\bu_i=t(\bq)^2$.
Assume that $\bu_i=h(\bq)^2$. Then
\[
\bu \approx \bu+\bu_i \approx \bu+h(\bq)^2 \stackrel{(\ref{38503})}\approx \bu+h(\bq)^2+h(\bq)t(\bq) \approx \bu+h(\bq)^2+\bq.
\]
This derives $\bu \approx \bu+\bq$.
\end{proof}

\section{Equational bases of some subdirectly irreducible $4$-element ai-semirings}
In this section we concentrate on the finite basis problem for some 4-element ai-semirings that are subdirectly irreducible.

\begin{pro}\label{pro36001}
$\mathsf{V}(S_{(4, 360)})$ is the ai-semiring variety defined by the identities
\begin{align}
xyz & \approx xyz+y; \label{36001} \\
xyz & \approx xy+yz+xz; \label{36002}\\
xy+yz & \approx xy+yz+xz, \label{36003}
\end{align}
where $x$ and $z$ may be empty in $(\ref{36001})$.
\end{pro}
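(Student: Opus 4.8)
The plan is to begin with the routine half: one checks directly, running over all homomorphisms $\varphi\colon P_f(X^+)\to S_{(4,360)}$, that $S_{(4,360)}$ satisfies \eqref{36001}--\eqref{36003}. Here the relevant features of the table are that $2$ is a two-sided multiplicative zero, that $3$ and $4$ are multiplicative idempotents, and that in the additive semilattice the order is $3,4<1<2$, so that $\varphi(\bu)+\varphi(\bq)=\varphi(\bu)$ means exactly $\varphi(\bq)\le\varphi(\bu)$. It then remains to prove that every ai-semiring identity of $S_{(4,360)}$ is derivable from \eqref{36001}--\eqref{36003} and the identities defining $\mathbf{AI}$; as usual it is enough to treat a nontrivial identity $\bu\approx\bu+\bq$ with $\bu=\bu_1+\cdots+\bu_n$ and $\bu_i,\bq\in X^+$.

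The first step is to collapse all lengths. Substituting words for the variables in \eqref{36002} (take $x,y$ to be the first two letters of a word and $z$ its remaining suffix) and inducting on length, one derives that every word is congruent, modulo \eqref{36002}, to a sum of words of length at most $2$. Applying this to $\bu$ and to $\bq$ (write $\bu'$ for the reduced form of $\bu$), and using that $\bu\approx\bu+\bq$ is equivalent to the conjunction of the identities $\bu'\approx\bu'+\bq_j$ as $\bq_j$ runs over the summands of the reduced right-hand side (each summand satisfies $\varphi(\bq_j)\le\varphi(\bq)\le\varphi(\bu')$), the problem reduces to the case in which $\bu'$ is a sum of words of length $\le 2$ and $\bq$ is a single word of length $1$ or $2$. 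It is convenient to picture a length-two summand $xy$ of $\bu'$ as a directed edge $x\to y$ and a length-one summand as an isolated vertex.

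I would then solve the resulting equational problem by a case split on $\bq$. If $\bq=z$ is a single variable, then evaluating $\varphi(z)=2$ and every other variable at $3$ shows that $z\in c(\bu')$ is necessary; conversely, if $z$ occurs in some summand of $\bu'$, one of the two degenerate forms of \eqref{36001} (with $x$, respectively $z$, empty) gives $\bu'\approx\bu'+z$ at once. If $\bq=z_1z_2$, the claim to be proved is that $S_{(4,360)}\models\bu'\approx\bu'+z_1z_2$ if and only if the edge set of $\bu'$ contains a directed path from $z_1$ to $z_2$ (a directed closed walk through $z_1$ when $z_1=z_2$). Sufficiency is a clean induction: given such a path $z_1=w_0\to w_1\to\cdots\to w_k=z_2$, repeated use of \eqref{36003} composes $w_0w_1+w_1w_2$ into $w_0w_2$, then $w_0w_2+w_2w_3$ into $w_0w_3$, and so on, until the summand $z_1z_2$ is produced.

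The delicate, and in my view hardest, step is the converse: from the \emph{absence} of a $z_1$--$z_2$ path one must build a homomorphism witnessing failure. Let $R$ be the set of variables reachable from $z_1$ along edges of $\bu'$, and set $\varphi(v)=4$ for $v\in R$ and $\varphi(v)=3$ otherwise. The point is that this colouring keeps every summand of $\bu'$ alive: an edge with source in $R$ has its target in $R$ as well, giving the pattern $4\cdot4=4$, while an edge with source outside $R$ has source value $3$, giving $3\cdot3$ or $3\cdot4$; in particular the only lethal pattern $4\cdot3$ never occurs, so $\varphi(\bu')\in\{1,3,4\}$. On the other hand $z_1\in R$ and $z_2\notin R$ force $\varphi(z_1z_2)=4\cdot3=2\not\le\varphi(\bu')$, contradicting validity. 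The loop case $z_1=z_2$ is treated by the parallel assignment $\varphi(z_1)=1$, with $4$ on the vertices reachable from $z_1$ by a nonempty walk and $3$ elsewhere, which again keeps $\bu'$ alive while sending $z_1^2$ to $2$. Carrying out this survival bookkeeping precisely --- tracking exactly which of the patterns in $\{1,3,4\}^2$ avoid the zero $2$ --- is the part that demands the most care, after which the three identities are seen to axiomatize $\mathsf{V}(S_{(4,360)})$.
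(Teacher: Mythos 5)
Your proposal is correct and follows essentially the same route as the paper: reduce all words to length at most $2$ via \eqref{36002}, view $L_2(\bu)$ as a directed graph, derive the missing summand by iterating \eqref{36003} along a path, and refute the identity via a reachability-based homomorphism (with the separate $\varphi(z_1)=1$ assignment for the loop case) when no path exists. The only cosmetic difference is that you establish the necessary conditions by direct evaluations in $S_{(4,360)}$, whereas the paper extracts them from the subalgebras $S_{54}$ and $S_{57}$ via Lemma \ref{lem5701} and its dual.
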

\begin{proof}
It is easily verified that $S_{(4, 360)}$ satisfies the identities \eqref{36001}--\eqref{36003}.
In the remainder it is enough to show that every ai-semiring identity of $S_{(4, 360)}$
can be derived by \eqref{36001}--\eqref{36003} and the identities defining $\mathbf{AI}$.
Let $\bu \approx \bu+\bq$ be such a nontrivial identity,
where $\bu=\bu_1+\bu_2+\cdots+\bu_n$ and $\bu_i, \bq\in X^+$, $1 \leq i \leq n$.
By the identity \eqref{36002}, we may assume that $\ell(\bq)\leq 2$ and $\ell(\bu_i)\leq 2$ for all $1 \leq i \leq n$.
Since $S_{54}$ is isomorphic to $\{1, 2, 4\}$ and $S_{57}$ is isomorphic to $\{1, 2, 3\}$,
we have that both $S_{54}$ and $S_{57}$ satisfy $\bu \approx \bu+\bq$.
It is easy to see that $S_{54}$ and $S_{57}$ have dual multiplications.
If $\ell(\bq)=1$, then by Lemma \ref{lem5701} there exists $\bu_i \in L_2(\bu)$
such that $\bu_i=\bq x$ or $\bu_i=x\bq$ for some $x\in X$.
By the identity \eqref{36001} we have
\[
\bu \approx \bu+\bu_i \approx \bu+\bu_i+\bq\approx \bu+\bq.
\]

Now consider the case that $\ell(\bq)=2$. Then $\bq=xy$ for some $x, y \in X$.
By Lemma \ref{lem5701} and its dual we have that
$L_2(\bu)$ contains $xx'$ and $y'y$ for some $x', y' \in X$.
In the following we shall think of $L_2(\bu)$ as a directed graph
whose vertex set is $c(L_2(\bu))$ and edge set consists of $(x_1, y_1)$ if $x_1y_1\in L_2(\bu)$.

\textbf{Case 1.} $x = y$.
If there is no path from $x$ to $x$ in the directed graph $L_2(\bu)$,
then we consider the semiring homomorphism $\varphi: P_f(X^+) \to S_{(4, 360)}$ defined by
$\varphi(x)=1$, $\varphi(z)=4$ if there is a path from $x$ to $z$, and $\varphi(z)=3$ otherwise.
Then $\varphi(\bu)= 1$ and $\varphi(\bq)=2$, a contradiction.
Thus there is a path from $x$ to $x$, and so $L_2(\bu)$ contains
$xx_1, x_1x_2, x_2x_3, \cdots, x_nx$ for some $x_1, x_2, \ldots, x_n\in X$ and $n\geq1$.
Now we have
\begin{align*}
\bu
&\approx \bu+xx_1+x_1x_2+x_2x_3+\cdots+x_{n}x\\
&\approx \bu+xx_1+x_1x_2+x_2x_3+\cdots+x_{n}x+x^2  &&(\text{by}~\eqref{36003})\\
&\approx \bu+xx_1+x_1x_2+x_2x_3+\cdots+x_{n}x+\bq.
\end{align*}
This derives $\bu \approx \bu+\bq$.

\textbf{Case 2.}  $x \neq y$. Suppose by way of contradiction that there is no path from
$x$ to $y$ in the directed graph $L_2(\bu)$.
Let $\varphi: P_f(X^+) \to S_{(4, 360)}$ be a semiring homomorphism such that $\varphi(z)=4$
if $z=x$ or there is a path from $x$ to $z$, and $\varphi(z)=3$ otherwise.
Then $\varphi(\bu)= 1$ and $\varphi(\bq)=2$, which is a contradiction.
So there is a path from $x$ to $y$.
By the identity \eqref{36003} we can derive $\bu \approx \bu+\bq$.
\end{proof}

It is easy to see that $S_{56}$ and $S_{58}$  have dual multiplications.
By \cite[Lemma 4.1]{yrzs} we immediately have
\begin{lem}\label{lem5601}
Let $\bu\approx \bu+\bq$ be a nontrivial ai-semiring identity such that $\bu=\bu_1+\bu_2+\cdots+\bu_n$
and $\bu_i, \bq \in X^+$, $1\leq i \leq n$.
Suppose that $\bu\approx \bu+\bq$ is satisfied by $S_{56}$. Then
$L_{\geq 2}(\bu) \neq \emptyset$, $T_{\bq}(\bu) \neq \emptyset$, and
$L_{\geq 2}(\bu)\cap T_{\bq}(\bu) \neq \emptyset$ if $\ell(\bq)\geq 2$.
\end{lem}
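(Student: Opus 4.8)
The plan is to obtain the lemma purely by duality from the known equational description of $S_{58}$, exactly as the surrounding sentence signals. Since $S_{56}$ and $S_{58}$ have dual multiplications, no genuinely new argument is needed; the only work is to verify that the directional data in the conclusion transforms correctly under word reversal. So the whole content will sit in \cite[Lemma 4.1]{yrzs}, with a short translation on top.

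First I would set up the reversal operation. For a word $\bw=y_1y_2\cdots y_k\in X^+$ write $\overleftarrow{\bw}=y_k\cdots y_2y_1$, and extend it additively to terms by $\overleftarrow{\bu}=\overleftarrow{\bu_1}+\cdots+\overleftarrow{\bu_n}$. Because the multiplication of $S_{58}$ is the opposite of that of $S_{56}$, every homomorphism $\varphi\colon P_f(X^+)\to S_{56}$ corresponds to a homomorphism $\psi\colon P_f(X^+)\to S_{58}$ agreeing with $\varphi$ on generators; evaluating a product in the opposite semigroup reverses the order of factors, so $\psi(\overleftarrow{\bw})=\varphi(\bw)$ for every word $\bw$, and the join structure is preserved summand-wise. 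Consequently $S_{56}$ satisfies $\bu\approx\bu+\bq$ if and only if $S_{58}$ satisfies $\overleftarrow{\bu}\approx\overleftarrow{\bu}+\overleftarrow{\bq}$.

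Next I would apply \cite[Lemma 4.1]{yrzs}, which is the mirror image of the present statement for $S_{58}$: it records the analogous necessary conditions, with the last-variable set $T_{\bq}(\cdot)$ replaced by its first-variable counterpart. Feeding the reversed identity $\overleftarrow{\bu}\approx\overleftarrow{\bu}+\overleftarrow{\bq}$ into that lemma yields the corresponding conditions on $\overleftarrow{\bu}$ and $\overleftarrow{\bq}$, which I would then translate back through the reversal dictionary. Reversal is length-preserving, so $\ell(\overleftarrow{\bw})=\ell(\bw)$; hence the hypothesis $\ell(\bq)\ge 2$ is reversal-invariant and the reversal map restricts to a bijection $L_{\ge 2}(\bu)\to L_{\ge 2}(\overleftarrow{\bu})$. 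Writing $h(\bw)$ for the first variable of $\bw$, one has $t(\bw)=h(\overleftarrow{\bw})$, so the first-variable set attached to $\overleftarrow{\bq}$ inside $\overleftarrow{\bu}$ is exactly the image of $T_{\bq}(\bu)$ under reversal. Under this correspondence the three conclusions for $S_{58}$ become precisely $L_{\ge 2}(\bu)\neq\emptyset$, $T_{\bq}(\bu)\neq\emptyset$, and $L_{\ge 2}(\bu)\cap T_{\bq}(\bu)\neq\emptyset$ when $\ell(\bq)\ge 2$, as required.

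There is no substantial obstacle: all the depth is in \cite[Lemma 4.1]{yrzs}, and the duality makes the transfer immediate. The one point demanding care is the translation dictionary itself, namely confirming that $L_{\ge 2}$ and the condition $\ell(\bq)\ge 2$ are invariant while the tail datum $t$ and the set $T_{\bq}$ swap with their head counterparts under reversal; once this bookkeeping is in place the lemma follows at once.
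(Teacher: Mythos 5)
Your proposal is correct and is essentially the paper's own argument: the paper likewise derives the lemma for $S_{56}$ from \cite[Lemma 4.1]{yrzs} via the observation that $S_{56}$ and $S_{58}$ have dual multiplications, stating the transfer as immediate. You merely make explicit the reversal bookkeeping (that $\ell$ and $L_{\ge 2}$ are reversal-invariant while $t$ and $T_{\bq}$ swap with their head counterparts), which the paper leaves to the reader.
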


\begin{pro}\label{pro36301}
$\mathsf{V}(S_{(4, 363)})$ is the ai-semiring variety defined by the identities
\begin{align}
xy & \approx xy+y; \label{36301} \\
xyz & \approx xz+yz; \label{36302}\\
xy+yz& \approx xy+yz+xz. \label{36303}
\end{align}
\end{pro}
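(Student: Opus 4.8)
The plan is to establish the nontrivial inclusion by showing that every ai-semiring identity satisfied by $S_{(4,363)}$ is derivable from \eqref{36301}--\eqref{36303}; the opposite inclusion $\mathsf V(S_{(4,363)})\subseteq$ (the variety defined by the basis) is the routine verification that $S_{(4,363)}$ satisfies the three identities. So let $\bu\approx\bu+\bq$ be a nontrivial identity of $S_{(4,363)}$, with $\bu=\bu_1+\cdots+\bu_n$ and $\bu_i,\bq\in X^+$. The subreduct of $S_{(4,363)}$ on $\{1,2,4\}$ is isomorphic to $S_{56}$, so it inherits the identity and Lemma \ref{lem5601} applies, giving $L_{\geq2}(\bu)\neq\emptyset$ and $T_\bq(\bu)\neq\emptyset$. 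First I would use \eqref{36302} (which rewrites a word $\bw$ of length $\geq3$ with $v_k=t(\bw)$ as $\sum_i v_iv_k$) to reduce every summand, so that we may assume $\ell(\bu_i)\leq2$ and $\ell(\bq)\leq2$. As in the proof of Proposition \ref{pro36001}, I would then view $L_2(\bu)$ as a directed graph on $c(\bu)$ with an edge $a\to b$ whenever $ab\in L_2(\bu)$.

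For $\ell(\bq)=1$, say $\bq=x$, Lemma \ref{lem5601} yields $T_x(\bu)\neq\emptyset$; since $\bu\approx\bu+x$ is nontrivial, $x$ is not a summand of $\bu$, so every word of $\bu$ ending in $x$ has length $\geq2$. Taking such a $\bu_j=p(\bu_j)x$ and applying \eqref{36301} with $x\mapsto p(\bu_j)$, $y\mapsto x$ gives $\bu_j\approx\bu_j+x$, hence $\bu\approx\bu+x$.

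The substantive case is $\ell(\bq)=2$, and it splits exactly as in Proposition \ref{pro36001}. Write $\bq=xy$. If $x\neq y$, I claim the identity forces a directed path from $x$ to $y$ in $L_2(\bu)$: if no such path exists, let $R$ be the set of vertices reachable from $x$ and take the homomorphism $\varphi\colon P_f(X^+)\to S_{(4,363)}$ with $\varphi(v)=4$ for $v\in R$ and $\varphi(v)=3$ otherwise. Since $R$ is closed under edges, every summand of $\bu$ evaluates into $\{3,4\}$, whence $\varphi(\bu)\in\{1,3,4\}$, while $\varphi(xy)=2$, a contradiction. Given a path $x=z_0\to z_1\to\cdots\to z_m=y$, iterating \eqref{36303} on $z_0z_1+z_1z_2$, then on $z_0z_2+z_2z_3$, and so on, derives $xy$, so $\bu\approx\bu+xy$. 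If $x=y$, i.e. $\bq=x^2$, the relevant invariant is a directed cycle through $x$: if none exists, the assignment $\varphi(x)=1$, $\varphi(v)=4$ for $v$ reachable from $x$ in at least one step, and $\varphi(v)=3$ otherwise is well defined, and one checks every summand of $\bu$ evaluates into $\{1,3,4\}$ while $\varphi(x^2)=2$; a cycle $x\to z_1\to\cdots\to z_m\to x$ then yields $x^2$ by the same iteration of \eqref{36303}.

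The hard part will be the bookkeeping in the two homomorphism probes: one must verify, from the explicit multiplication of $S_{(4,363)}$, that under each chosen $\varphi$ no summand of $\bu$ attains the separating value $2$, so that $\varphi(\bu)$ stays strictly below $\varphi(\bq)=2$ in the additive order. This uses that $R$ is closed under edges in the first probe, and that the absence of a cycle through $x$ forbids any edge into $x$ from a vertex of $R\cup\{x\}$ in the second. The remaining points—the length reduction via \eqref{36302} and checking that the path/cycle iteration of \eqref{36303} reconstructs $\bq$ exactly—are routine and parallel Proposition \ref{pro36001}, with \eqref{36302} and \eqref{36301} playing the roles taken there by the corresponding reduction and absorption identities.
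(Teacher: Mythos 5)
Your proposal is correct and follows essentially the same route as the paper: reduce all words to length at most $2$ via \eqref{36302}, dispatch $\ell(\bq)=1$ with Lemma \ref{lem5601} and \eqref{36301}, and for $\ell(\bq)=2$ run the directed-graph path/cycle argument with separating homomorphisms and iterated applications of \eqref{36303}, exactly as in Proposition \ref{pro36001}. The only cosmetic difference is that the paper first pins down an out-edge at $x$ and an in-edge at $y$ via Lemmas \ref{lem5701} and \ref{lem5601} before deferring to those cases, whereas you let the homomorphism probes into $S_{(4,363)}$ do that work directly; both versions check out against the multiplication table.
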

\begin{proof}
It is easy to check that $S_{(4, 363)}$ satisfies the identities \eqref{36301}--\eqref{36303}.
In the remainder we need only show that every ai-semiring identity of $S_{(4, 363)}$
is derivable from \eqref{36301}--\eqref{36303} and the identities defining $\mathbf{AI}$.
Let $\bu \approx \bu+\bq$ be such a nontrivial identity,
where $\bu=\bu_1+\bu_2+\cdots+\bu_n$ and $\bu_i, \bq\in X^+$, $1 \leq i \leq n$.
By the identity \eqref{36302} we may assume that $\ell(\bq)\leq 2$ and $\ell(\bu_i)\leq 2$ for all $\bu_i\in \bu$.
It is easy to see that $S_{56}$ and $S_{57}$ can be embedded into $S_{(4, 363)}$.
So both $S_{56}$ and $S_{57}$ satisfy $\bu \approx \bu+\bq$.
If $\ell(\bq)=1$, then by Lemma \ref{lem5601} $\bq=t(\bu_i)$ for some $\bu_i \in L_2(\bu)$, and so
\[
\bu \approx \bu+\bu_i\approx \bu+h(\bu_i)\bq \stackrel{(\ref{36301})}\approx  \bu+h(\bu_i)\bq+\bq\approx \bu+\bq.
\]

Now suppose that $\ell(\bq)=2$. Then $\bq=xy$ for some $x, y\in X$.
By Lemmas \ref{lem5701} and \ref{lem5601} we have that
$L_2(\bu)$ contains $xx'$ and $y'y$ for some $x', y'\in X$.
The remaining steps are similar to Case 1 and Case 2 in the proof of Proposition \ref{pro36001}.
\end{proof}

\begin{cor}
The ai-semiring $S_{(4, 376)}$ is finitely based.
\end{cor}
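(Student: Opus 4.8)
The plan is to recognize $S_{(4,376)}$ as the dual (opposite) semiring of $S_{(4,363)}$ and then invoke the finite basis result just established in Proposition~\ref{pro36301}. First I would compare the two Cayley tables for multiplication in Table~\ref{tb1}: reading the multiplication table of $S_{(4,363)}$ by columns rather than by rows should reproduce exactly the multiplication table of $S_{(4,376)}$, so that the product $a\cdot b$ computed in $S_{(4,376)}$ equals $b\cdot a$ computed in $S_{(4,363)}$. Since both algebras share the same additive reduct displayed in Figure~\ref{figure01}, this identifies $S_{(4,376)}$ with the opposite semiring of $S_{(4,363)}$; that is, the two semirings have dual multiplications.

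Next I would use the standard observation that the class of finitely based ai-semirings is closed under passage to the opposite algebra. Concretely, reversing every word in $X^+$ induces a bijection $\bu\approx\bv \mapsto \overleftarrow{\bu}\approx\overleftarrow{\bv}$ on ai-semiring identities, and this bijection carries the identities satisfied by a semiring $S$ onto those satisfied by its dual. In particular it sends a finite equational basis of $\mathsf{V}(S)$ to a finite equational basis of the variety generated by the dual of $S$. Proposition~\ref{pro36301} supplies the finite basis \eqref{36301}--\eqref{36303} for $\mathsf{V}(S_{(4,363)})$, so applying this word-reversal yields a finite basis for $\mathsf{V}(S_{(4,376)})$, whence $S_{(4,376)}$ is finitely based.

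This is exactly the device already used in the excerpt for the dual pairs treated in the corollaries following Propositions~\ref{pro35701}, \ref{pro36201} and \ref{pro38001}, so no new machinery is required. There is no genuine obstacle here: the only thing to verify is the tabular duality, and that is a direct inspection of the transposed multiplication table. I would therefore present the argument in a single short sentence, noting that $S_{(4,376)}$ and $S_{(4,363)}$ have dual multiplications and concluding finite basedness immediately from Proposition~\ref{pro36301}.
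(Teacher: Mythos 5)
Your proposal is correct and coincides with the paper's own argument: the paper also observes that $S_{(4,376)}$ and $S_{(4,363)}$ have dual multiplications and concludes immediately from Proposition~\ref{pro36301}. The transposition check of the Cayley tables works out exactly as you describe, so nothing further is needed.
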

\begin{proof}
It is easy to see that $S_{(4, 376)}$ and $S_{(4, 363)}$ have dual multiplications.
By Proposition $\ref{pro36301}$ we obtain that $S_{(4, 376)}$ is finitely based.
\end{proof}

\begin{pro}\label{pro36601}
$\mathsf{V}(S_{(4, 366)})$ is the ai-semiring variety defined by the identities
\begin{align}
xy & \approx yx; \label{36601} \\
xy  & \approx xy+x; \label{36603} \\
xy& \approx xy+x^2; \label{36604}\\
x_1x_2x_3 & \approx x_1x_2x_3+y; \label{36602} \\
xy+yz& \approx xy+yz+xz. \label{36605}
\end{align}
\end{pro}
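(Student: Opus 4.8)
The plan is to follow the template of Propositions \ref{pro36001} and \ref{pro36301}, since \eqref{36605} is again the ``transitivity'' identity and the additive order of $S_{(4, 366)}$ (two minimal elements $3,4$, coatom $2$) forces a connectivity condition. First I would check on the Cayley table that $S_{(4, 366)}$ satisfies \eqref{36601}--\eqref{36605}; the only slightly delicate one is \eqref{36605}, where one verifies $ac\le ab+bc$ for all $a,b,c$ by observing that a product equals the middle element $1$ exactly when its two factors are equal and lie in $\{3,4\}$, and equals the absorbing top $2$ otherwise. It then remains to derive an arbitrary nontrivial identity $\bu\approx \bu+\bq$ of $S_{(4, 366)}$, written with $\bu=\bu_1+\cdots+\bu_n$ and $\bu_i,\bq\in X^+$, from \eqref{36601}--\eqref{36605} and the identities defining $\mathbf{AI}$. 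Using \eqref{36601} I would pass to commutative words at the outset.

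The key simplification is that every product of length $\ge 3$ evaluates to the absorbing top $2$, which in \eqref{36602} appears as an absorption rather than a length reduction. Concretely, if some $\bu_j$ has $\ell(\bu_j)\ge 3$, then writing $\bu_j=a_1a_2(a_3\cdots a_s)$ and substituting the word $\bq$ for $y$ in \eqref{36602} gives $\bu_j\approx \bu_j+\bq$, whence $\bu\approx \bu+\bu_j\approx \bu+\bu_j+\bq\approx \bu+\bq$. So I may assume $\ell(\bu_i)\le 2$ for every $i$. Next I would record the necessary conditions coming from homomorphisms $\varphi\colon P_f(X^+)\to S_{(4, 366)}$: the assignment sending a variable of $\bq$ lying outside $c(\bu)$ to $2$ and all other variables to $3$ would force $\varphi(\bq)=2\not\le\varphi(\bu)\le 1$, so $c(\bq)\subseteq c(\bu)$; the constant map $\varphi\equiv 3$ forces $\ell(\bq)\le 2$ (otherwise $\varphi(\bq)=2\not\le\varphi(\bu)\le 1$); and the constant map $\varphi\equiv 1$ forces $L_{\ge 2}(\bu)\ne\emptyset$ whenever $\ell(\bq)=2$.

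With these in hand I would split on $\ell(\bq)$. If $\ell(\bq)=1$, say $\bq=x$, then nontriviality gives $x\notin L_1(\bu)$, so by $c(\bq)\subseteq c(\bu)$ and $\ell(\bu_i)\le 2$ the variable $x$ occurs in a length-two summand $xw$ (or $x^2$), and \eqref{36603} yields $\bu\approx \bu+xw\approx \bu+xw+x\approx \bu+x$. If $\bq=x^2$, then $x\in c(L_2(\bu))$ and \eqref{36604} applied to the summand $xw$ gives $\bu\approx \bu+xw\approx \bu+xw+x^2\approx \bu+x^2$. The substantive case is $\bq=xy$ with $x\ne y$. Here I would form the undirected graph (undirected because of \eqref{36601}) on $c(L_2(\bu))$ whose edges are the length-two summands of $\bu$, and argue that $x$ and $y$ lie in a common connected component: otherwise, colouring the component of $x$ by $3$ and everything else by $4$ makes every length-two summand evaluate to $1$ while $\varphi(xy)=3\cdot 4=2$, contradicting $\varphi(\bq)\le\varphi(\bu)=1$ (this last value uses $L_{\ge 2}(\bu)\ne\emptyset$). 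Finally, given a path $x=v_0,v_1,\dots,v_k=y$ of length-two summands, I would apply \eqref{36605} repeatedly (with \eqref{36601} to align each edge), deriving $v_0v_2$, then $v_0v_3$, and so on up to $v_0v_k=xy$, completing the derivation of $\bu\approx \bu+\bq$.

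The main obstacle is this last case: extracting from the bare validity of $\bu\approx \bu+\bq$ the combinatorial fact that $x$ and $y$ are connected through length-two summands, and then transcribing that connectivity into an explicit chain of applications of \eqref{36605}. The homomorphism argument must be arranged so that all length-two summands are driven to the value $1$ at once, which is precisely why one colours by the two minimal elements $3,4$ rather than by $1$; one must also confirm that stray occurrences of $x$ or $y$ in length-one summands cannot restore validity, a point that again rests on $L_{\ge 2}(\bu)\ne\emptyset$ obtained above.
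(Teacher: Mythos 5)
Your proposal follows essentially the same route as the paper's proof: absorb $\bq$ into any summand of length at least three via \eqref{36602}, reduce to $\ell(\bu_i)\le 2$ and $\ell(\bq)\le 2$, dispatch $\ell(\bq)=1$ with \eqref{36603} and $\bq=x^2$ with \eqref{36604}, and for linear $\bq=xy$ prove that $x$ and $y$ are connected in the graph $L_2(\bu)$ by the same two-colouring homomorphism (minimal elements $3,4$ on the two sides) and then chain \eqref{36605} along a path. The only structural difference is that you derive the necessary conditions ($c(\bq)\subseteq c(\bu)$, $\ell(\bq)\le 2$, $L_{\ge 2}(\bu)\ne\emptyset$) by direct homomorphisms into $S_{(4,366)}$, whereas the paper imports them from the subalgebras $T_2\cong\{1,2\}$ and $S_{59}\cong\{1,2,3\}$ via Lemma \ref{lem5901}; this makes your argument self-contained but otherwise identical.

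There is one unjustified step: in the subcase $\bq=x^2$ you assert $x\in c(L_2(\bu))$, and this does not follow from the three necessary conditions you established. For instance $\bu=x+y^2$, $\bq=x^2$ satisfies all three, yet $x\notin c(L_2(\bu))$ and the identity indeed fails in $S_{(4,366)}$ under $x\mapsto 1$, $y\mapsto 3$. The paper obtains $c(\bq)\subseteq c(L_2(\bu))$ for $\ell(\bq)=2$ from Lemma \ref{lem5901}(2); in your setting you need one further homomorphism of exactly the kind you already use, e.g.\ $x\mapsto 1$ and every other variable to $3$, which sends every summand of $\bu$ to a value at most $1$ whenever $x$ occurs in no length-two summand, while $\varphi(x^2)=2$. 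With that one sentence added the argument is complete; note that no analogous patch is needed in the linear case, since your colouring already yields a contradiction when $x$ or $y$ fails to lie in $c(L_2(\bu))$.
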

\begin{proof}
It is easy to check that $S_{(4, 366)}$ satisfies the identities \eqref{36601}--\eqref{36605}.
In the remainder we need only prove that every ai-semiring identity of $S_{(4, 366)}$
can be derived by \eqref{36601}--\eqref{36605} and the identities defining $\mathbf{AI}$.
Let $\bu \approx \bu+\bq$ be such a nontrivial identity,
where $\bu=\bu_1+\bu_2+\cdots+\bu_n$ and $\bu_i, \bq\in X_c^+$, $1 \leq i \leq n$.
Since $T_2$ is isomorphic to $\{1, 2\}$,
it follows that $T_2$ satisfies $\bu \approx \bu+\bq$ and so $\ell(\bu_j)\geq 2$ for some $\bu_j \in \bu$.
Since $S_{59}$ is isomorphic to $\{1, 2, 3\}$, we have that $S_{59}$ also satisfies $\bu \approx \bu+\bq$.
By Lemma \ref{lem5901} it is enough to consider the following two cases.

\textbf{Case 1.} $\ell(\bu_i)\geq 3$ for some $\bu_i \in \bu$. Then
\[
\bu \approx \bu+\bu_i \stackrel{(\ref{36602})}\approx \bu+\bu_i+\bq \approx \bu+\bq.
\]

\textbf{Case 2.} $\ell(\bu_i) \leq 2$ for all $\bu_i \in \bu$.
Then $\ell(\bq)\leq 2$ and so $c(\bq) \subseteq c(L_2(\bu))$.

\textbf{Subcase 2.1.} $\ell(\bq)=1$. Then there exists $\bu_j \in L_2(\bu)$ such that
$\bu_j=\bq x$ for some $x\in X$. Now we have
\[
\bu \approx \bu+\bq x \stackrel{(\ref{36603})}\approx  \bu+\bq x+\bq.
\]

\textbf{Subcase 2.2.} $\ell(\bq)=2$.
If $\bq$ is not linear, then $\bq=x^2$ for some $x\in X$
and so there exists $\bu_k \in L_2(\bu)$ such that $\bu_k = xy$ for some $y\in X$.
Now we have
\[
\bu \approx \bu+\bu_k \approx \bu+xy \stackrel{(\ref{36604})}\approx  \bu+xy+x^2  \approx \bu+xy+\bq.
\]
If $\bq$ is linear, then $\bq=xy$ for some distinct elements $x$ and $y$ of $X$.
By the identities \eqref{36601} and \eqref{36603}, we may assume that $c(L_1(\bu))\cap c(L_2(\bu))=\emptyset$.
Now $L_2(\bu)$ can be  thought of as a graph
whose vertex set is $c(L_2(\bu))$ and edge set consists of $\{x_1, y_1\}$ if $x_1y_1\in L_2(\bu)$.
Then both $x$ and $y$ are vertices of this graph. Suppose by way of contradiction that there is no path connecting
$x$ and $y$ in this graph. Consider the semiring homomorphism $\varphi: P_f(X^+) \to S_{(4, 366)}$ defined by $\varphi(z)=3$
if $z$ and $x$ are in the same connected component, and $\varphi(z)=4$ otherwise.
Then $\varphi(\bu)= 1$ and $\varphi(\bq)=2$, a contradiction.
It follows that there is a path connecting $x$ and $y$. So we may assume that $xx_1, x_1x_2, x_2x_3, \cdots, x_{k+1}y \in L_2(\bu)$
for some $x_1, x_2, \ldots, x_{k+1}\in X$ and $k\geq0$. Now we have
\begin{align*}
\bu
&\approx \bu+xx_1+x_1x_2+x_2x_3+\cdots+x_{k+1}y\\
&\approx \bu+xx_1+x_1x_2+x_2x_3+\cdots+x_{k+1}y+xy  &&(\text{by}~\eqref{36605})\\
&\approx \bu+xx_1+x_1x_2+x_2x_3+\cdots+x_{k+1}y+\bq.
\end{align*}
This implies the identity $\bu \approx \bu+\bq$ as required.
\end{proof}

The following result can be found in \cite[Lemma 6.1]{yrzs}.
\begin{lem}\label{lem6001}
Let $\bu\approx \bu+\bq$ be a nontrivial ai-semiring identity such that
$\bu=\bu_1+\bu_2+\cdots+\bu_n$
and $\bu_i, \bq \in X^+$, $1\leq i \leq n$.
Suppose that $\bu\approx \bu+\bq$ is satisfied by $S_{60}$.
Then $L_{\geq 2}(\bu)\neq \emptyset$ and $c(\bq)\subseteq c(L_{\geq2}(\bu))$.
\end{lem}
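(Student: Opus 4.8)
The plan is to argue by contradiction, separating $\bu$ from $\bu+\bq$ by explicit homomorphisms into $S_{60}$. First I would record the only fact about $S_{60}$ that the argument needs. Up to isomorphism $S_{60}$ is the three-element ai-semiring whose additive reduct is a chain $c<b<a$ (so that $+$ is the maximum) and whose multiplication is given by $c\cdot c=c$ and $xy=a$ in every other case. The crucial consequence is the following evaluation rule for a homomorphism $\varphi\colon P_f(X^+)\to S_{60}$ and a word $\bw\in X^+$: if $\ell(\bw)=1$ then $\varphi(\bw)$ is simply the value of $\varphi$ on that variable (and so may equal the middle element $b$), whereas if $\ell(\bw)\ge 2$ then $\varphi(\bw)=c$ when every variable occurring in $\bw$ is sent to $c$, and $\varphi(\bw)=a$ otherwise. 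In particular no word of length at least two can evaluate to $b$. Since $\varphi(\bu)$ is the maximum in the chain of the values $\varphi(\bu_i)$, this rule lets me compute both sides of any candidate identity directly.

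For the containment $c(\bq)\subseteq c(L_{\ge 2}(\bu))$ I would assume, towards a contradiction, that there is a variable $x\in c(\bq)$ with $x\notin c(L_{\ge 2}(\bu))$, and define $\varphi$ by $\varphi(x)=b$ and $\varphi(z)=c$ for every other variable $z$. Because $x$ occurs in no summand of length at least two, every such summand has all its letters sent to $c$ and hence evaluates to $c$; every length-one summand different from the word $x$ also evaluates to $c$. Thus $\varphi(\bu)=c$ unless $x$ itself is one of the summands $\bu_i$, in which case $\varphi(\bu)=b$. On the other side, $\varphi(\bq)=b$ if $\ell(\bq)=1$ (so $\bq=x$) and $\varphi(\bq)=a$ if $\ell(\bq)\ge 2$ (the letter $x$ is sent to $b\neq c$). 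A short check of the two possibilities for $\varphi(\bu)$ against the two possibilities for $\varphi(\bq)$ shows $\varphi(\bu)\neq\varphi(\bu+\bq)$ in every case, contradicting that $S_{60}$ satisfies the identity. Since $\bq\in X^+$ is nonempty, $c(\bq)\neq\emptyset$, and the containment just proved forces $c(L_{\ge 2}(\bu))\neq\emptyset$; hence $L_{\ge 2}(\bu)\neq\emptyset$ as well, which settles the first assertion.

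The one place that needs care --- and where nontriviality of the identity is essential --- is the case in which the offending variable $x$ occurs in $\bu$ as a length-one summand, so that $\varphi(\bu)=b$. If at the same time $\ell(\bq)=1$, then $\bq=x$ is already a summand of $\bu$, the identity $\bu\approx\bu+\bq$ is trivial, and this subcase is excluded by hypothesis; if instead $\ell(\bq)\ge 2$, then $\varphi(\bq)=a$ sits strictly above $b$ in the chain, so $\varphi(\bu+\bq)=a\neq b=\varphi(\bu)$ and the separation still succeeds. Thus the main obstacle is essentially bookkeeping: one must partition the argument according to $\ell(\bq)$ and to whether $x$ appears as a singleton summand, and invoke the triviality exclusion in exactly the one degenerate subcase. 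Every remaining verification is immediate from the evaluation rule recorded above.
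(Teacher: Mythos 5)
Your argument is correct. Note, however, that the paper contains no internal proof of this lemma: it is imported verbatim as \cite[Lemma 6.1]{yrzs}, so there is nothing in the present text to compare your derivation against. Your self-contained proof by a separating homomorphism is the natural one and matches the style the paper uses where it does prove such necessity conditions (compare the proof of Lemma \ref{lem4701}); what it buys is independence from the companion paper. Your description of $S_{60}$ as the chain $c<b<a$ with $+$ the maximum, $c\cdot c=c$, and every other product equal to $a$ is consistent with the paper's assertions that $S_{60}$ sits inside $S_{(4,368)}$ as $\{1,2,4\}$ and inside $S_{(4,375)}$ as $\{1,2,3\}$, and your evaluation rule for words of length at least two follows from it by an easy induction. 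One presentational blemish: in your second paragraph you claim that all combinations of the possible values of $\varphi(\bu)$ and $\varphi(\bq)$ yield $\varphi(\bu)\neq\varphi(\bu+\bq)$, which is false for the combination $\varphi(\bu)=\varphi(\bq)=b$; your third paragraph then correctly observes that in this subcase $\ell(\bq)=1$, $\bq=x$ is already a summand of $\bu$, and the identity is trivial, so the subcase is excluded by hypothesis. The case split should be stated once rather than asserted and then corrected, but the logic as a whole is sound and complete.
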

\begin{pro}\label{pro36801}
$\mathsf{V}(S_{(4, 368)})$ is the ai-semiring variety defined by the identities
\begin{align}
xy & \approx yx; \label{36801} \\
xy & \approx xy+x; \label{36802} \\
xy  & \approx xy+x^2; \label{36803} \\
xy+yz& \approx xy+yz+xz; \label{36804}\\
xy+yy_1y_2& \approx xyy_1y_2; \label{36805}\\
x_1x_2x_3+y_1y_2y_3& \approx x_1x_2x_3y_1y_2y_3. \label{36806}
\end{align}
\end{pro}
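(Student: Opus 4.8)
The plan is to follow the template of Propositions \ref{pro36001} and \ref{pro36601}. First I would verify, directly from the multiplication table, that $S_{(4, 368)}$ satisfies \eqref{36801}--\eqref{36806}; since \eqref{36801} is commutativity of multiplication, I may thereafter assume $\bu_i, \bq \in X_c^+$. It then remains to derive an arbitrary nontrivial identity $\bu \approx \bu + \bq$ of $S_{(4, 368)}$ from \eqref{36801}--\eqref{36806}. The crucial observation is that $S_{(4, 368)}$ has three subalgebras whose equational problems are already solved: $T_2 \cong \{1, 2\}$, $S_{59} \cong \{1, 2, 3\}$ and $S_{60} \cong \{1, 2, 4\}$. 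From $T_2$ I obtain $L_{\ge 2}(\bu) \neq \emptyset$; from $S_{60}$ and Lemma \ref{lem6001} I obtain $c(\bq) \subseteq c(L_{\ge 2}(\bu))$; and from $S_{59}$ and Lemma \ref{lem5901} I obtain the dichotomy that either some $\bu_i$ has $\ell(\bu_i) \ge 3$, or every $\bu_i$ has $\ell(\bu_i) \le 2$, in which case $\ell(\bq) \le 2$ with $c(\bq) \subseteq c(L_2(\bu))$ when $\ell(\bq) = 2$.

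I would first dispose of the branch in which all $\bu_i$ have length at most $2$. If $\ell(\bq) = 1$, then $\bq$ occurs in some $\bu_i \in L_{\ge 2}(\bu)$, and substituting a word for the variable $y$ in \eqref{36802} yields $\bu \approx \bu + \bu_i \approx \bu + \bu_i + \bq$. If $\bq = x^2$, the same substitution applied to \eqref{36803} produces $x^2$ from any length-$\ge 2$ word containing $x$. If $\bq = xy$ is linear, I would run the directed-graph argument of Proposition \ref{pro36001}: regard $L_2(\bu)$ as a graph on $c(L_2(\bu))$, use a homomorphism into $S_{(4, 368)}$ (sending variables to $3$ or $4$ according to connectivity) to force $x$ and $y$ onto a common path, and then assemble $xy$ by repeated use of the transitivity identity \eqref{36804}. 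These steps should be routine adaptations of Propositions \ref{pro36001} and \ref{pro36601}.

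The main obstacle is the branch where some $\bu_i$ has $\ell(\bu_i) \ge 3$. Unlike in Proposition \ref{pro36601}, where every word of length $\ge 3$ collapses to the additive top, in $S_{(4, 368)}$ a long word still evaluates to $4$ once all its variables are sent to $4$; consequently $c(\bq) \subseteq c(L_{\ge 2}(\bu))$ is not by itself sufficient, and here Lemma \ref{lem5901} gives no control over $\bq$. I therefore expect to need a further structural condition extracted by a direct homomorphism argument: were the variables of $\bq$ separable from the length-$\ge 2$ support of $\bu$ by an assignment into $\{3,4\}$ forcing $\varphi(\bu) = 1$ while $\varphi(\bq) = 2$, the identity would fail, so ruling this out supplies the connectivity I need. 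Granting this, I would build $\bq$ from the available length-$\ge 2$ pieces, namely $\bu_i$ together with length-$2$ words furnished by \eqref{36804}, by gluing them along shared variables through the merging identities \eqref{36805} and \eqref{36806} (which, after substituting words for variables, assert that a length-$\ge 2$ word and a length-$\ge 3$ word sharing a variable merge into their product, and that any two length-$\ge 3$ words sum to their product), using \eqref{36802} and \eqref{36803} to introduce missing letters and squares. The delicate point, where I expect the real work to concentrate, is organizing these merges so that the resulting monomial is exactly $\bq$ rather than some longer word; the square-absorbing identity \eqref{36803} and the connectivity established above should be what lets me target the correct monomial.
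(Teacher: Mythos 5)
Your plan follows the paper's proof in all essentials: the same subalgebras $S_{59}\cong\{1,2,3\}$ and $S_{60}\cong\{1,2,4\}$, the same Lemmas \ref{lem5901} and \ref{lem6001}, the same graph-connectivity argument with \eqref{36804} when $\bq$ is a linear word of length two supported in $c(L_2(\bu))$, and the same merging strategy via \eqref{36805} and \eqref{36806} when words of length at least three are present. Two remarks on the branch you leave open. First, the ``delicate point'' you anticipate --- arranging the merges so that the resulting monomial is exactly $\bq$ rather than something longer --- is not where the difficulty lies: identity \eqref{36802} with words substituted for $x$ and $y$ extracts $\bq$ from any summand of the form $\bq\bq_1$, and \eqref{36806} applied to a long word with itself squares it, so multiplicities can be boosted at will; it therefore suffices to produce \emph{any} summand divisible by $\bq$ in $X_c^+$. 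Second, the step you genuinely need and only gesture at is a normalization: using \eqref{36802}, \eqref{36805} and \eqref{36806} one may assume
$c(L_1(\bu))\cap c(L_{\geq 2}(\bu))=c(L_2(\bu))\cap c(L_{\geq 3}(\bu))=\emptyset$.
Without this, your separating assignments into $\{3,4\}$ need not force $\varphi(\bu)=1$, because in $S_{(4,368)}$ a word of length at least three containing a letter sent to $3$ evaluates to $2$, which destroys the contradiction. After the normalization the case analysis closes as in the paper: if $\ell(\bq)\geq 3$, a separating homomorphism ($c(L_2(\bu))\mapsto 3$, else $4$) shows $c(\bq)\subseteq c(L_{\geq 3}(\bu))$ and one merges long words via \eqref{36806}; if $\bq=xy$ with $x\neq y$, then either both variables lie in $c(L_{\geq 3}(\bu))$ (merge and extract), or both lie in $c(L_2(\bu))$ (your graph argument with \eqref{36804}), while the mixed case is ruled out by another homomorphism. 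With these two points supplied, your outline coincides with the paper's proof.
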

\begin{proof}
It is easy to verify that $S_{(4, 368)}$ satisfies the identities \eqref{36801}--\eqref{36806}.
In the remainder we shall show that every ai-semiring identity of $S_{(4, 368)}$
is derivable from \eqref{36801}--\eqref{36806} and the identities defining $\mathbf{AI}$.
Let $\bu \approx \bu+\bq$ be such a nontrivial identity,
where $\bu=\bu_1+\bu_2+\cdots+\bu_n$ and $\bu_i, \bq\in X_c^+$, $1 \leq i \leq n$.
Since $S_{59}$ is isomorphic to $\{1, 2, 3\}$, it follows that $S_{59}$ satisfies the identity $\bu \approx \bu+\bq$.
Since $S_{60}$ is isomorphic to $\{1, 2, 4\}$, we have that $S_{60}$ also satisfies the identity $\bu \approx \bu+\bq$.
By Lemmas \ref{lem5901} and \ref{lem6001} it is enough to consider the following three cases.

\textbf{Case 1.} $\ell(\bq)=1$. Then $c(\bq)\subseteq c(\bu)$ and so
there exists $\bu_k\in L_{\geq 2}(\bu)$ such that $\bu_k=\bq\bu_k'$ for some $\bu_k' \in X^+$.
Now we have
\[
\bu \approx \bu+\bu_k \approx \bu+\bq\bu_k' \stackrel{\eqref{36802}}\approx  \bu+\bq\bu_k'+\bq.
\]
This implies the identity $\bu \approx \bu+\bq$.

\textbf{Case 2.} $\ell(\bq)=2$.
Then $c(\bq) \subseteq  c(L_{\geq 2}(\bu))$.
Since $S_{(4, 368)}$ satisfies the identities \eqref{36802}, \eqref{36805} and \eqref{36806},
we may assume that
\[
c(L_1(\bu)) \cap c(L_{\geq 2}(\bu))=c(L_2(\bu)) \cap c(L_{\geq 3}(\bu))=\emptyset.
\]
If $|c(\bq)|=1$, then $\bq=x^2$ for some $x\in X$.
So there exists $\bu_j \in c(L_{\geq 2}(\bu))$ such that
$\bu_j = x\bu_j'$ for some $\bu_j' \in X^+$. Now we have
\[
\bu \approx \bu+\bu_j\approx \bu+x\bu_j' \stackrel{\eqref{36803}}\approx  \bu+x\bu_j'+x^2  \approx \bu+x\bu_j'+\bq \approx \bu+\bq.
\]
If $|c(\bq)|=2$, then $\bq=xy$ for some distinct elements $x$ and $y$ of $X$.
By Lemma \ref{lem6001} we have that $c(\bq)\subseteq c(L_{\geq2}(\bu))$.

\textbf{Subcase 2.1.} $c(\bq) \subseteq c(L_{\geq 3}(\bu))$.
Then there $\bu_i, \bu_j\in L_{\geq 3}(\bu)$ such that $x\in c(\bu_i)$ and $y\in c(\bu_j)$.
This implies that $\bu_i\bu_j=\bq\bq_1$ for some $\bq_1 \in X^+$ and so
\[
\bu\approx \bu+\bu_i+\bu_j \stackrel{\eqref{36806}}\approx\bu+\bu_i\bu_j\approx\bu+\bq\bq_1 \stackrel{\eqref{36802}}\approx\bu+\bq\bq_1+\bq.
\]

\textbf{Subcase 2.2.} $c(\bq)\subseteq c(L_2(\bu))$.
Let us think of $L_2(\bu)$ as a graph.
Suppose by way of contradiction that there is no path connecting
$x$ and $y$ in this graph. Consider the semiring homomorphism $\varphi: P_f(X^+) \to S_{(4, 368)}$ such that $\varphi(z)=3$
if $z$ and $x$ are in the same connected component, and $\varphi(z)=4$ otherwise.
It is easy to verify that $\varphi(\bu)= 1$ and $\varphi(\bq)=2$, a contradiction.
Thus there is a path connecting $x$ and $y$.
So we may assume that $xx_1, x_1x_2, x_2x_3, \cdots, x_{k+1}y \in L_2(\bu)$
for some $x_1, x_2, \ldots, x_{k+1}\in X$.
By \eqref{36804} we can derive the identity $\bu \approx \bu+\bq$.

\textbf{Subcase 2.3.}
$x\in c(L_2(\bu))$, $y\in c(L_{\geq 3}(\bu))$.
Let $\varphi: P_f(X^+) \to S_{(4, 368)}$ a semiring homomorphism
defined by $\varphi(z)=3$ if $z \in c(L_2(\bu))$, and $\varphi(z)=4$ otherwise.
Then $\varphi(\bu)=1$ and $\varphi(\bq)=2$, a contradiction.

\textbf{Case 3.} $\ell(\bq)\geq 3$. We shall show that $c(\bq) \subseteq  c(L_{\geq 3}(\bu))$.
Indeed, suppose that this is not true. Then
there exists $x \in c(\bq)$, $x \in c(L_2(\bu))$, but $x \notin c(L_{\geq 3}(\bu))$.
Consider the semiring homomorphism $\psi: P_f(X^+) \to S_{(4,368)}$
defined by $\psi(z)=3$ if $z \in c(L_2(\bu))$, and $\psi(z)=4$ otherwise.
It is easy to see that $\psi(\bu)=1$ and $\psi(\bq)=2$.
This is a contradiction. So $c(\bq) \subseteq  c(L_{\geq 3}(\bu))$.
The remaining steps are similar to Subcase 2.1.
\end{proof}
\begin{pro}\label{pro36901}
$\mathsf{V}(S_{(4, 369)})$ is the ai-semiring variety defined by the identities
\begin{align}
xy & \approx yx; \label{36901} \\
x^2 & \approx x^2+y; \label{36904}\\
xy& \approx xy+x; \label{36905}\\
x_1x_2x_3 & \approx x_1x_2x_3+x_4; \label{36902} \\
x_1x_2+x_2x_3+x_3x_4&\approx x_1x_2+x_2x_3+x_3x_4+x_1x_4. \label{36906}
\end{align}
\end{pro}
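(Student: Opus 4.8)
The plan is to first verify that $S_{(4,369)}$ satisfies \eqref{36901}--\eqref{36906}; this is routine once one observes that every square and every product of length $\geq 3$ equals the additive top $2$, that a product $ab$ of two variables equals the other distinguished value $1$ precisely when $\{a,b\}=\{3,4\}$, and that \eqref{36906} simply records the resulting bipartite behaviour (along a path $x_1x_2,x_2x_3,x_3x_4$ of ``value-$1$'' edges the colours alternate, forcing $x_1$ and $x_4$ to differ, so $x_1x_4$ is again such an edge). The bulk of the work is the converse: every nontrivial identity $\bu\approx\bu+\bq$ holding in $S_{(4,369)}$ must be derivable from \eqref{36901}--\eqref{36906} and the axioms of $\mathbf{AI}$. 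I would begin by using \eqref{36901} to pass to commutative words, and record from the evaluation sending all variables to the bottom-type element (together with a two-element subsemiring) the basic facts $L_{\geq2}(\bu)\neq\emptyset$ and $c(\bq)\subseteq c(\bu)$.

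The next move trivialises the ``large'' configurations. If $\bu$ has a term of length $\geq 3$, then \eqref{36902}, with its last variable specialised to the whole word $\bq$, yields $\bu\approx\bu+\bq$ immediately; if $\bu$ has a square $z^2$ among its summands, then \eqref{36904}, with $y$ specialised to $\bq$, does the same. Hence I may assume $\bu$ consists only of single variables and linear length-two words, which I view as the edges of a graph $G$ on $c(L_2(\bu))$. I would then dispose of the non-bipartite case: if $G$ contains an odd cycle, specialising \eqref{36906} along a triangle (setting $x_4=x_1$, so that $x_1x_2+x_2x_3+x_3x_1\approx\cdots+x_1^2$) produces a square, after first reducing a general odd cycle to a triangle by repeated edge-transport; once a square is available we are back in the previous paragraph, and \eqref{36904} delivers $\bu\approx\bu+\bq$.

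It remains to treat a bipartite $G$. Here $\bq=z^2$ cannot occur, since a proper $\{3,4\}$-colouring of $G$, extended arbitrarily to the other variables, would evaluate $\bu$ to a non-top value (the present edge giving $1$) while sending $z^2$ to the top $2$, contradicting validity of the identity. For $\ell(\bq)=1$, say $\bq=x$, nontriviality forces $x$ to occur in some edge $xz$ of $G$, and \eqref{36905} gives $xz\approx xz+x$, whence $\bu\approx\bu+x$. The substantive case is $\bq=xy$ linear: a homomorphism $\varphi\colon P_f(X^+)\to S_{(4,369)}$ built from a $2$-colouring of $G$ that places $x$ and $y$ on the same side --- possible unless they are joined by an odd path --- would send $\bu$ to a non-top value while sending $xy$ to $2$, again a contradiction; so $x$ and $y$ lie in a single component of $G$ at odd distance. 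Transporting the edge along this odd path by iterated use of \eqref{36906} (each application replacing a subpath $w_0w_1+w_1w_2+w_2w_3$ by the edge $w_0w_3$, preserving parity) finally derives $\bu\approx\bu+xy$.

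The main obstacle I anticipate is exactly this bipartite bookkeeping. Unlike the neighbouring semiring $S_{(4,368)}$, here the single-step transitivity $xy+yz\approx xy+yz+xz$ is \emph{false}, so ordinary connectivity must be upgraded to odd-distance connectivity: the crux is to check that the colouring homomorphism genuinely forces an odd path between $x$ and $y$, that \eqref{36906} transports an edge precisely along paths of odd length, and that it extracts a square from an odd cycle. Pinning down these parity conditions, and the odd-cycle-to-square reduction, is where the real effort lies; the remaining steps are routine applications of \eqref{36902}, \eqref{36904} and \eqref{36905}.
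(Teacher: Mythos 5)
Your proposal is correct and follows essentially the same route as the paper's proof: pass to the graph of length-two summands, dispose of long words and squares via \eqref{36902} and \eqref{36904}, extract a square from an odd cycle by iterating \eqref{36906}, and in the bipartite case use a $\{3,4\}$-colouring homomorphism to force an odd path between the two letters of $\bq$, along which \eqref{36906} transports the edge. Two small points: your opening claim that $c(\bq)\subseteq c(\bu)$ follows from a two-element subsemiring is false in general (e.g.\ $x_1x_2x_3\approx x_1x_2x_3+x_4$ holds in $S_{(4,369)}$), though it is only needed in the bipartite case, where your colouring homomorphism does establish it; and in that case you should also exclude $\ell(\bq)\geq 3$ explicitly, which the same top-evaluation argument you give for $\bq=z^2$ handles.
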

\begin{proof}
It is easy to check that $S_{(4, 369)}$ satisfies the identities \eqref{36901}--\eqref{36906}.
In the remainder we need only prove that every ai-semiring identity of $S_{(4, 369)}$
is derivable from \eqref{36901}--\eqref{36906} and the identities defining $\mathbf{AI}$.
Let $\bu \approx \bu+\bq$ be such a nontrivial identity,
where $\bu=\bu_1+\bu_2+\cdots+\bu_n$ and $\bu_i, \bq\in X_c^+$, $1 \leq i \leq n$.
Since $T_2$ is isomorphic to $\{1, 2\}$,
it follows that $T_2$ satisfies $\bu \approx \bu+\bq$ and so $L_{\geq 2}(\bu)$ is nonempty.
If $\ell(\bu_i)\geq 3$ for some $\bu_i \in \bu$, then
\[
\bu
\approx \bu+\bu_i\stackrel{\eqref{36902}}\approx \bu+\bu_i+\bq\approx \bu+\bq.
\]

Now suppose that $\ell(\bu_i) \leq 2$ for all $\bu_i \in \bu$.
By \eqref{36901} and \eqref{36905} we may assume that $c(L_1(\bu))\cap c(L_2(\bu))=\emptyset$.
Now let us think of $L_2(\bu)$ as a graph.
If there exists a loop in this graph, then there exists $\bu_i \in \bu$ such that $\bu_i=x^2$ for some $x\in X$.
So we have
\[
\bu
\approx \bu+\bu_i
\approx \bu+x^2
\stackrel{\eqref{36904}}\approx \bu+x^2+\bq\approx \bu+\bq.
\]
If there exists an odd circle in this graph,
we may assume that $L_2(\bu)$ contains
$xx_1, x_1x_2,$ $ x_2x_3, \ldots, x_{2k}x$
for some $x_1, x_2, \ldots, x_{2k}\in X$ and $k\geq1$.
Then
\begin{align*}
\bu
&\approx \bu+xx_1+x_1x_2+x_2x_3+\cdots+x_{2k}x\\
&\approx \bu+xx_1+x_1x_2+x_2x_3+\cdots+x_{2k}x+x^2  &&(\text{by}~\eqref{36906})\\
&\approx \bu+xx_1+x_1x_2+x_2x_3+\cdots+x_{2k}x+x^2+\bq.  &&(\text{by}~\eqref{36904})
\end{align*}
This derives the identity $\bu \approx \bu+\bq$.

In the following suppose that there are no loops or odd circles in the graph $L_2(\bu)$.
Recall that a graph is \emph{bipartite} if its vertex set can be decomposed into two disjoint sets such that no two
vertices within the same set are adjacent.
From \cite[Theorem 4]{bol98} we know that $L_2(\bu)$ is a bipartite graph
and so there exist $A$ and $B$ such that $A\cup B=c(L_2(\bu))$, $A\cap B=\emptyset$
and no edges exist between vertices within $A$ or $B$.

Next, we shall show that $\ell(\bq) \leq 2$ and $c(\bq) \subseteq c(\bu)$.
In fact, let $\varphi : P_f(X^+) \rightarrow S_{(4, 369)}$ be a semiring homomorphism
defined by $\varphi(x)=3$ if $x\in A$, $\varphi(x)=4$ if $x\in c(L_1(\bu))\cup B$,
and $\varphi(x)=2$ otherwise.
It is easy to see that $\varphi(\bu)=1$ and so $\varphi(\bq)\leq 1$.
This implies that $c(\bq) \subseteq c(\bu)$.
Since \eqref{36902} holds in $S_{(4, 369)}$, it follows that $\ell(\bq) \leq 2$.

If $\ell(\bq)=1$, then $\bq \in c(L_2(\bu))$ and so there exists $\bu_k \in L_2(\bu)$ such that
$\bu_k=\bq x$ for some $x\in X$. By \eqref{36905} we deduce
\[
\bu\approx \bu+\bu_k\approx \bu+\bq x\approx \bu+\bq x+\bq\approx \bu+\bq.
\]
If $\ell(\bq)=2$, one can use the above substitution $\varphi$ to show that $\bq$ is a linear word.
Let us write $\bq=xy$.
Let $\psi: P_f(X^+) \to S_{(4, 369)}$ be a semiring homomorphism defined by
$\psi(z)=3$ if $z \in A$, $\psi(z)=4$ if $z \in B$, and $\psi(z)=1$ otherwise.
It is easy to see that $\psi(\bu)=1$ and so $\psi(\bq)=1$.
This implies that $x\in A$, $y\in B$ or $x\in B$, $y\in A$.
Without loss of generality, assume that $x\in A$ and $y\in B$.
We shall show that there is a path connecting $x$ and $y$.
Suppose that it is not true.
Consider the substitution $\beta: P_f(X^+) \to S_{(4, 369)}$
defined by $\beta(z)=3$ if $z\in A\setminus y^*$ or $ z \in B \cap y^*$,
and $\beta(z)=4$ otherwise, where $y^*$ denotes the connected component of $y$.
Then $\beta(\bu)= 1$ and $\beta(\bq)=2$, a contradiction.
It follows that there is a path connecting $x$ and $y$, and so $L_2(\bu)$ contains
$xx_1, x_1x_2,$ $ x_2x_3, \ldots, x_{2k}y$
for some $x_1, x_2, \ldots, x_{2k}\in X$ and $k\geq1$.
By \eqref{36906} we can derive the identity $\bu \approx \bu+\bq$.
\end{proof}

\begin{pro}\label{pro36901}
$\mathsf{V}(S_{(4, 370)})$ is the ai-semiring variety defined by the identities
\begin{align}
x^2  & \approx x^2+y; \label{37003} \\
xy & \approx xy+x; \label{37004}\\
xy& \approx xy+y; \label{37005}\\
xy+yz &\approx xyz; \label{37006}\\
x_1x_2x_3 & \approx x_1x_2x_3+x_4; \label{37001} \\
x_1x_2+x_3x_4  & \approx x_1x_2+x_3x_4+x_1x_4. \label{37007}
\end{align}
\end{pro}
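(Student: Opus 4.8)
The plan is to follow the template of Propositions~\ref{pro36001} and~\ref{pro36301}, exploiting the arithmetic of $S_{(4,370)}$: any product of two elements lies in $\{1,2\}$, and such a product equals $1$ only for $4\cdot 3$. Hence every word of length $\ge 3$ and every square evaluates to the additive top $2$, whereas a linear length-$2$ word $xy$ evaluates to $1$ precisely when its first letter is sent to $4$ and its last letter to $3$, and to $2$ otherwise. After the routine check that $S_{(4,370)}$ satisfies \eqref{37003}--\eqref{37007}, I would record two absorption facts obtained by substituting an arbitrary term for the free ``spare'' variable: by \eqref{37001} every term of length $\ge 3$ absorbs any term, and by \eqref{37003} every square $a^2$ does as well.

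Let $\bu\approx\bu+\bq$ be a nontrivial identity of $S_{(4,370)}$ with $\bu=\bu_1+\cdots+\bu_n$. Since $T_2\cong\{1,2\}$, the copy of $T_2$ gives $L_{\ge 2}(\bu)\ne\emptyset$; and if some $\bu_i$ has length $\ge 3$ then $\bu\approx\bu+\bu_i\stackrel{\eqref{37001}}{\approx}\bu+\bu_i+\bq\approx\bu+\bq$, so I may assume $\ell(\bu_i)\le 2$ for every $i$. Let $H$ and $K$ denote, respectively, the sets of first letters and of last letters of the length-$2$ summands of $\bu$, and regard $L_2(\bu)$ as a directed graph with an edge $(h(\bw),t(\bw))$ for each $\bw\in L_2(\bu)$. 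The argument then splits according to whether $H\cap K=\emptyset$.

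If $H\cap K\ne\emptyset$, some letter $v$ is both the last letter of one summand and the first letter of another. Either $v^2\in L_2(\bu)$, which is already a top term, or there are distinct summands $bv,va\in L_2(\bu)$ and $bv+va\stackrel{\eqref{37006}}{\approx}bva$ is a top term of length $3$. In either case $\bu$ derivably contains a top term, and the absorption facts give $\bu+\bq\approx\bu$ for \emph{every} $\bq$. If instead $H\cap K=\emptyset$, then $\bu$ has no square summand and the colouring $\varphi_0$ sending $H\to 4$, $K\to 3$ and all remaining variables to $1$ is consistent and yields $\varphi_0(\bu)=1$. Comparing $\varphi_0(\bq)$ with $\varphi_0(\bu)=1$ rules out $\ell(\bq)\ge 3$ and $\bq$ a square (both would give $\varphi_0(\bq)=2\not\le 1$); modifying $\varphi_0$ at a single variable shows $c(\bq)\subseteq c(\bu)$, and, when $\ell(\bq)=2$ with $\bq=xy$, that $x\in H$ (set that variable to $3$, forcing $\varphi(xy)=2$) and $y\in K$ (set that variable to $4$). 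For $\ell(\bq)=1$ the variable of $\bq$ lies in $c(\bu)$, so it occurs in some summand and \eqref{37004}/\eqref{37005} (or additive idempotency) give $\bu\approx\bu+\bq$. For $\bq=xy$, the memberships $x\in H$, $y\in K$ produce summands $xa,\,by\in L_2(\bu)$, whence
\[
\bu\approx\bu+xa+by\stackrel{\eqref{37007}}{\approx}\bu+xa+by+xy\approx\bu+\bq.
\]

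The feature I would emphasize as the crux is that \eqref{37007} is a \emph{cross} identity rather than the transitivity identity \eqref{36003} used for $S_{(4,360)}$: it splices a single out-edge at $x$ to a single in-edge at $y$ with no intervening path, so the directed-graph reasoning collapses to locating just those two edges. The main work therefore lies in engineering the base colouring $\varphi_0$ together with its one-point modifications so that they simultaneously certify the dichotomy on $H\cap K$ and the memberships $x\in H$, $y\in K$, and in verifying that each modification indeed keeps $\varphi_0(\bu)=1$; the longer and repeated instances of $\bq$ are then disposed of uniformly by the term-substituted forms of \eqref{37001} and \eqref{37003}.
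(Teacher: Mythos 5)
Your proof is correct and follows essentially the same route as the paper's: reduce to summands of length at most $2$, split on whether some tail letter of $L_2(\bu)$ is also a head letter (your $H\cap K$ dichotomy is exactly the paper's Case 1/Case 2), absorb via \eqref{37006} and \eqref{37001} in the first case, and in the second use a $4/3$-colouring of heads/tails to force $\ell(\bq)\le 2$, linearity, $h(\bq)\in H$, $t(\bq)\in K$, before closing with \eqref{37007}. The only deviations are cosmetic: the paper first normalises $c(L_1(\bu))\cap c(L_2(\bu))=\emptyset$ via \eqref{37004}--\eqref{37005} and sends variables outside $c(\bu)$ to $2$ directly, whereas you keep them at $1$ and use one-point modifications, and you dispatch the square-summand subcase via \eqref{37003} where the paper reuses \eqref{37006} with repeated letters.
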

\begin{proof}
It is easy to verify that $S_{(4, 370)}$ satisfies the identities \eqref{37003}--\eqref{37007}.
In the remainder it is enough to prove that every ai-semiring identity of $S_{(4, 370)}$
can be derived by \eqref{37003}--\eqref{37007} and the identities defining $\mathbf{AI}$.
Let $\bu \approx \bu+\bq$ be such a nontrivial identity,
where $\bu=\bu_1+\bu_2+\cdots+\bu_n$ and $\bu_i, \bq\in X^+$, $1 \leq i \leq n$.
Since $T_2$ is isomorphic to $\{1, 2\}$, it follows that $T_2$ satisfies $\bu \approx \bu+\bq$ and so $L_{\geq 2}(\bu)$ is nonempty.
If $\ell(\bu_i)\geq 3$ for some $\bu_i \in \bu$, then
\[
\bu
\approx \bu+\bu_i \stackrel{(\ref{37001})} \approx \bu+\bu_i+\bq \approx \bu+\bq.
\]
Now suppose that $\ell(\bu_i) \leq 2$ for all $\bu_i \in \bu$.
By \eqref{37004} and \eqref{37005} we may assume that $c(L_1(\bu)) \cap c(L_2(\bu))=\emptyset$.
We consider the following two cases.

\textbf{Case 1.} $t(\bu_k)=h(\bu_\ell)$ for some $\bu_k,\bu_\ell \in L_2(\bu)$. Then
\begin{align*}
\bu
&\approx \bu+\bu_k+\bu_\ell\\
&\approx \bu+h(\bu_k)t(\bu_k)+h(\bu_\ell)t(\bu_\ell)\\
&\approx \bu+h(\bu_k)t(\bu_k)+t(\bu_k)t(\bu_\ell)\\
&\approx \bu+h(\bu_k)t(\bu_k)t(\bu_\ell) &&(\text{by}~\eqref{37006})\\
&\approx \bu+h(\bu_k)t(\bu_k)t(\bu_\ell)+\bq.&&(\text{by}~\eqref{37001})
\end{align*}
This implies the identity $\bu\approx \bu+\bq$.

\textbf{Case 2.} $t(\bu_k)\neq h(\bu_\ell)$ for all $\bu_k,\bu_\ell \in L_2(\bu)$.
We shall show that $\ell(\bq) \leq 2$, $\bq$ is a linear word, $c(\bq) \subseteq c(\bu)$,
and $h(\bq) \in h(L_2(\bu))$ and $t(\bq) \in t(L_2(\bu))$ if $\ell(\bq) = 2$.
Indeed, let $\varphi : P_f(X^+) \rightarrow S_{(4,370)}$ be a semiring homomorphism
defined by $\varphi(x)=4$ if $x\in h(L_2(\bu))$, $\varphi(x)=3$ if $x\in t(L_2(\bu))$,
$\varphi(x)=1$ if $x\in c(L_1(\bu))$, and $\varphi(x)=2$ otherwise.
It is easy to see that $\varphi(\bu)=1$ and so $\varphi(\bq)\leq 1$.
This implies that $c(\bq) \subseteq c(\bu)$.
Since \eqref{37003} and \eqref{37001} hold in $S_{(4, 370)}$, it follows that $\bq$ is a linear word and $\ell(\bq) \leq 2$.
If $\ell(\bq) = 2$, then $\varphi(\bq)=1$ and so $h(\bq) \in h(L_2(\bu))$ and $t(\bq) \in t(L_2(\bu))$.

If $\ell(\bq)=1$, then there exists $\bu_j \in L_2(\bu)$
such that $\bu_j=\bq t(\bu_j)$ or $\bu_j=h(\bu_j)\bq$. So we have
\[
\bu  \approx \bu+\bu_j \approx \bu+\bq t(\bu_j) \stackrel{(\ref{37004})}\approx \bu+\bq t(\bu_j)+\bq \approx \bu+\bq
\]
or
\[
\bu  \approx \bu+\bu_j \approx \bu+h(\bu_j)\bq \stackrel{(\ref{37005})}\approx \bu+h(\bu_j)\bq+\bq\approx \bu+\bq.
\]

If $\ell(\bq)=2$, we may write $\bq=xy$ for some distinct elements $x$ and $y$ of $X$.
Then $x\in h(L_2(\bu))$ and $y\in t(L_2(\bu))$.
This implies that $xx_1$ and $y_1y$ are both in $L_2(\bu)$ for some elements $x_1$ and $y_1$ of $X$.
By the identity \eqref{37007} we have
\[
\bu \approx  \bu+xx_1+y_1y \approx \bu+xx_1+y_1y+xy\approx \bu+xx_1+y_1y+\bq
\]
and so the identity $\bu \approx \bu+\bq$ is derived.
\end{proof}

\begin{pro}\label{pro37501}
$\mathsf{V}(S_{(4, 375)})$ is the ai-semiring variety defined by the identities
\begin{align}
xy & \approx yx; \label{37501} \\
xy & \approx xy+x; \label{37502}\\
xy &\approx xy+x^2; \label{37504}\\
xyz& \approx xy+yz+xz; \label{37503}\\
xy+yz  & \approx xy+yz+xz. \label{37505}
\end{align}
\end{pro}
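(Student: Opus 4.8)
The plan is to follow the template established for the subdirectly irreducible semirings treated in Propositions \ref{pro36001}, \ref{pro36301} and \ref{pro36601}. First I would check by direct computation that $S_{(4, 375)}$ satisfies \eqref{37501}--\eqref{37505}; this is routine, since $2$ is an absorbing element for the multiplication and the only nontrivial idempotents are $3$ and $4$, with $3\cdot 4 = 2$. For the converse, take a nontrivial identity $\bu \approx \bu + \bq$ of $S_{(4, 375)}$, written as $\bu = \bu_1 + \cdots + \bu_n$. By \eqref{37501} we may assume $\bu_i, \bq \in X_c^+$. Since \eqref{37503} rewrites every word of length $\ge 3$ as a sum of its length-two subproducts, and since in the additive semilattice the relation $\bu \approx \bu + \bq$ forces each such subproduct to lie below $\bu$, I would reduce to the case $\ell(\bu_i) \le 2$ for all $i$ and $\ell(\bq) \le 2$, recovering the general case by summing the length-two pieces back up through \eqref{37503}. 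Both $T_2 \cong \{1,2\}$ and $S_{59} \cong \{1,2,3\}$ are subalgebras of $S_{(4,375)}$, hence satisfy $\bu \approx \bu + \bq$; from $T_2$ I get $L_{\ge 2}(\bu) \neq \emptyset$ (so $L_2(\bu) \neq \emptyset$ after the reduction), while Lemma \ref{lem5901} yields $\ell(\bq) \le 2$ together with $c(\bq) \subseteq c(\bu)$ when $\ell(\bq) = 1$ and $c(\bq) \subseteq c(L_2(\bu))$ when $\ell(\bq) = 2$. Finally, \eqref{37502} lets me delete any redundant one-letter summand whose variable already occurs in $c(L_2(\bu))$, so I may assume $c(L_1(\bu)) \cap c(L_2(\bu)) = \emptyset$.

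The short cases come next. If $\ell(\bq) = 1$, then nontriviality forces $\bq \in c(L_2(\bu))$, so some length-two summand of $\bu$ contains $\bq$; applying \eqref{37502} to that summand produces $\bq$ and gives $\bu \approx \bu + \bq$. If $\ell(\bq) = 2$ with $\bq = x^2$ nonlinear, then $x \in c(L_2(\bu))$, so $\bu$ has a summand $xz$ with $z \neq x$, and \eqref{37504} gives $xz \approx xz + x^2$, whence $\bu \approx \bu + x^2 = \bu + \bq$.

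The main case, and the main obstacle, is $\ell(\bq) = 2$ with $\bq = xy$ linear. Here I would regard $L_2(\bu)$ as an undirected graph on vertex set $c(L_2(\bu))$, joining $a$ and $b$ whenever $ab \in L_2(\bu)$, and aim to show that $x$ and $y$ lie in the same connected component; once this is known, repeated use of the shortcut identity \eqref{37505} along a path from $x$ to $y$ derives $\bu \approx \bu + xy$, exactly as in Proposition \ref{pro36601}. To prove connectivity I argue by contradiction: assuming no path joins $x$ to $y$, I define a homomorphism $\varphi \colon P_f(X^+) \to S_{(4,375)}$ sending every variable in the component of $x$ to $3$ and every other variable to $4$. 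The delicate point is verifying that $\varphi(\bu) = 1$: each length-two summand has both letters in a single component, so maps to $3\cdot 3 = 3$ or $4\cdot 4 = 4$, and each surviving one-letter summand maps to $3$ or $4$ as well, so no summand maps to the absorbing element $2$; moreover $x \in c(L_2(\bu))$ supplies a summand with value $3$ and $y \in c(L_2(\bu))$, lying in a different component, supplies one with value $4$, so the additive join of all summand-values is $3 + 4 = 1$. Since $\varphi(\bq) = 3 \cdot 4 = 2$, this gives $\varphi(\bu + \bq) = 1 + 2 = 2 \neq 1 = \varphi(\bu)$, contradicting that the identity holds in $S_{(4,375)}$. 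Hence $x$ and $y$ are connected and the derivation goes through. I expect the only real care to be in coordinating the two preliminary reductions (to $\ell \le 2$ and to $c(L_1(\bu)) \cap c(L_2(\bu)) = \emptyset$) with the requirement that both a $3$-valued and a $4$-valued summand survive, so that $\varphi(\bu)$ is forced to equal the coatom $1$ rather than $2$.
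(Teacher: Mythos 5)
Your proposal follows the same route as the paper's proof: reduce to summands of length at most two via \eqref{37503}, eliminate one-letter summands whose variable occurs in $c(L_2(\bu))$ via \eqref{37501} and \eqref{37502}, dispose of $\ell(\bq)=1$ and $\bq=x^2$ directly with \eqref{37502} and \eqref{37504}, and in the linear case $\bq=xy$ view $L_2(\bu)$ as a graph, prove that $x$ and $y$ are connected by the $3$/$4$ colouring homomorphism, and walk the path with \eqref{37505}. The structure, the case split, and the contradiction $\varphi(\bu)=1$ versus $\varphi(\bu+\bq)=2$ are all exactly as in the paper.

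The one substantive error is your identification of the three-element subalgebra $\{1,2,3\}$ of $S_{(4,375)}$ with $S_{59}$ and the consequent appeal to Lemma \ref{lem5901}. In $S_{(4,375)}$ one has $3\cdot 3=3$, so the minimal element of this subalgebra is multiplicatively idempotent; comparing with the subalgebras exhibited in the proof of Proposition \ref{pro36801}, where $S_{59}\cong\{1,2,3\}$ has $3\cdot 3=1$ while $S_{60}\cong\{1,2,4\}$ has $4\cdot 4=4$, the subalgebra you want is isomorphic to $S_{60}$, not $S_{59}$, and $S_{59}$ is not a subalgebra of $S_{(4,375)}$ at all (nor do you offer any other reason for it to lie in $\mathsf{V}(S_{(4,375)})$). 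So the invocation of Lemma \ref{lem5901} is unjustified as written. Fortunately the facts you actually extract from it, namely $L_2(\bu)\neq\emptyset$ and $c(\bq)\subseteq c(L_2(\bu))$ (and hence, for $\ell(\bq)=1$, that $\bq$ occurs in a length-two summand), are precisely what Lemma \ref{lem6001} --- the $S_{60}$ lemma, which is what the paper invokes --- supplies, so the proof is repaired by replacing the citation and the subalgebra identification; no other step is affected.
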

\begin{proof}
It is easy to verify that $S_{(4, 375)}$ satisfies the identities \eqref{37501}--\eqref{37505}.
In the remainder we shall show that every ai-semiring identity of $S_{(4, 375)}$
is derivable from \eqref{37501}--\eqref{37505} and the identities defining $\mathbf{AI}$.
Let $\bu \approx \bu+\bq$ be such a nontrivial identity,
where $\bu=\bu_1+\bu_2+\cdots+\bu_n$ and $\bu_i, \bq\in X_c^+$, $1 \leq i \leq n$.
By the identities \eqref{37503} and \eqref{37502},
we may assume that $\ell(\bq)\leq 2$, $\ell(\bu_i)\leq 2$ for all $1 \leq i \leq n$,
and $c(L_1(\bu))\cap c(L_2(\bu))=\emptyset$.
Since $S_{60}$ is isomorphic to $\{1, 2, 3\}$, it follows that $S_{60}$ satisfies $\bu \approx \bu+\bq$.
By Lemma \ref{lem6001} we have that $L_2(\bu)\neq \emptyset$ and $c(\bq)\subseteq c(L_2(\bu))$.

If $\ell(\bq)=1$, then $\bu_i=\bq x$ for some $x\in X$, and so
\[
\bu \approx \bu+\bu_i \approx \bu+\bq x \stackrel{\eqref{37502}}\approx  \bu+\bq x+\bq\approx  \bu+\bq.
\]
Now suppose that $\ell(\bq)=2$. Then $\bq=xy$ for some $x, y\in X$.

\textbf{Case 1.} $x = y$. Then $\bu_j=x t(\bu_j)$ for some $\bu_j\in L_2(\bu)$.
Furthermore, we have
\[
\bu \approx \bu+\bu_j \approx \bu+x t(\bu_j) \stackrel{\eqref{37504}}\approx \bu+x t(\bu_j)+x^2\approx \bu+\bq.
\]

\textbf{Case 2.} $x \neq y$.
Now we think of $L_2(\bu)$ as a graph.
Suppose by way of contradiction that $x$ and $y$ are not in the same connected component.
Consider the semiring homomorphism $\varphi: P_f(X^+) \to S_{(4, 375)}$ such that $\varphi(z)=3$
if $z$ and $x$ are in the same connected component, $\varphi(z)=4$ otherwise.
Then $\varphi(\bu)= 1$ and $\varphi(\bu+\bq)=2$.
This is a contradiction and so there is a path connecting $x$ and $y$.
So we may assume that $L_2(\bu)$ contains $xx_1, x_1x_2, x_2x_3, \ldots, x_ny$
for some $x_1, x_2, \ldots, x_n\in X$ and $n\geq1$.
By the identity \eqref{37505} we can derive the identity $\bu \approx \bu+\bq$.
\end{proof}

\qquad

\noindent
\textbf{Acknowledgements}
The authors would like to extend their gratitude to their team members
Zidong Gao, Yanling Liang, Junyang Liu, Qizheng Sun, Xin Ye, Mengyu Yuan, and Lingli Zeng for their productive discussions.
We are profoundly grateful to Professor Mikhail V. Volkov for his unwavering long-term support, encouragement, and invaluable assistance.
We also wish to thank the anonymous referee for her/his valuable comments and suggestions,
which have significantly enhanced the quality of this paper.

\bibliographystyle{amsplain}

\end{document}